\newtheorem{theorem}{Theorem}[section]
\newtheorem{proposition}[theorem]{Proposition}
\newtheorem{corollary}[theorem]{Corollary}
\newtheorem{lemma}[theorem]{Lemma}
\theoremstyle{definition}
\newtheorem{definition}[theorem]{Definition}
\theoremstyle{remark}
\newtheorem{remark}[theorem]{Remark}
\DeclarePairedDelimiter{\norm}{\lVert}{\rVert}
\newcommand{\scalar}[2]{\left( #1,#2 \right)}
\newcommand{\dual}[2]{\left\langle #1,#2 \right\rangle}
\newcommand{\eps}{\varepsilon}
\newcommand{\R}{\mathbb{R}}
\DeclareMathOperator{\sech}{sech}
\DeclareMathOperator{\inertia}{Inertia}
\DeclareMathOperator{\Span}{span}
\newcommand*\rmd{\mathop{}\!\mathrm{d}}
\renewcommand{\leq}{\leqslant}
\renewcommand{\geq}{\geqslant}
\DeclareMathAlphabet{\mathpzc}{OT1}{pzc}{m}{it}
\begin{document}

\title[Stability of the multi-solitons of mKdV]{Stability of the
  multi-solitons \\of the modified Korteweg-de Vries equation}

\author[S.~Le Coz]{Stefan Le Coz}
\thanks{The work of S. L. C. is 
  partially supported by ANR-11-LABX-0040-CIMI within the
  program ANR-11-IDEX-0002-02 and  ANR-14-CE25-0009-01}

\author[Z.~Wang]{Zhong Wang}
\thanks{The work of Z. W. is
 supported by the China National Natural Science Foundation under grant number
11901092, Guangdong Natural Science Foundation under grant number 2017A030310634 and a scholarship  from China Scholarship Council (no. 201708440461). }

\address[Stefan Le Coz]{Institut de Math\'ematiques de Toulouse ; UMR5219,
 \newline\indent
  Universit\'e de Toulouse ; CNRS,
 \newline\indent
  UPS IMT, F-31062 Toulouse Cedex 9,
 \newline\indent
  France}
\email[Stefan Le Coz]{stefan.lecoz@math.cnrs.fr}

\address[Zhong Wang]{School of Mathematics and Big Data,
 \newline\indent
  Foshan University,
 \newline\indent
  Foshan, Guangdong, {528000},
 \newline\indent
P. R. China.}
\email[Zhong Wang]{ wangzh79@mail2.sysu.edu.cn}

\subjclass[2010]{35Q53, 35B35, 35Q51, 35C08, 76B25}

\date{\today}
\keywords{stability, multi-solitons, N-solitons, recursion operator, Sylvester Law of Inertia, Korteweg-de Vries equation}

\begin{abstract}
We establish the nonlinear stability of $N$-soliton solutions of the
modified Korteweg-de Vries (mKdV) equation. The $N$-soliton solutions
are global solutions of mKdV  behaving at (positive and negative) time
infinity as sums of $1$-solitons with speeds $0<c_1<\cdots< c_N$.
The proof relies on the variational characterization of
$N$-solitons. We show that the $N$-solitons realize the local
minimum of the $(N+1)$-th mKdV conserved quantity subject to fixed
constraints on the $N$ first conserved quantities.
To this aim, we  construct a functional for which $N$-solitons are critical points,
we prove that the spectral properties of the linearization of this functional around a $N$-soliton are preserved on the extended timeline, and we analyze the spectrum at infinity of linearized operators around $1$-solitons. 
The main new ingredients in our analysis are a new operator identity based on a generalized Sylvester law of inertia and recursion operators for the mKdV equation.
\end{abstract}

\maketitle

 \setcounter{tocdepth}{1}

\tableofcontents

\section{Introduction}
\label{sec:introduction}

We consider the modified Korteweg-de Vries equation
\begin{equation}
 \label{eq:mkdv}
 \tag{mKdV}
u_t+(u_{xx}+u^3)_x=0,
\end{equation}
where $u:\R_t\times\R_x\to\R$. The modified Korteweg-de Vries equation~\eqref{eq:mkdv} is a well-known completely integrable model~\cite{Mi68,Wa73}. In particular, solutions might be constructed using
the inverse scattering transform and there exists an infinite sequence
of conservations laws.

Among the possible solutions of~\eqref{eq:mkdv}, some are of particular interest: the solitons and
multi-solitons. A soliton is a solution of the form
\[
U_{c_1}(t,x)=Q_{c_1}(x-c_1t-x_1),
\]
where the profile $Q_{c_1}$ is fixed along the time evolution and is
translated along $\R$ at speed $c_1>0$ with initial position $x_1$. A
multi-soliton is a solution $U_{c_1,\dots,c_N}$ of~\eqref{eq:mkdv} 
such that
\[
U_{c_1,\dots,c_N}(t,x)\sim_{t\to\pm\infty}\sum_{j=1}^NQ_{c_j}(x-c_jt-x_j^\pm),
 \]
which means that $U_{c_1,\dots,c_N}$ behaves at negative and positive time
infinity as a sum of solitons. Explicit formulas for solitons and
multi-solitons are known and will be recalled in Section~\ref{sec:preliminaries}.

It has long been known (see Schuur~\cite{Sc86}) that a solution of the
classical Korteweg-de Vries equation (i.e. when the nonlinearity is quadratic
instead of cubic) decomposes as a finite sum of solitons and a
dispersive remainder. This type of behavior is expected to be generic
for nonlinear dispersive equations, but it has seldom been
rigorously established and remains known most of the time under the
name \emph{Soliton Resolution Conjecture}. In the case of the modified
Korteweg-de Vries equation, the conjecture has been established
recently in weighted spaces and for multi-solitons in~\cite{ChLi19}. However, whereas for the
classical Korteweg-de Vries equation the only nonlinear  solutions
obtained via inverse scattering are the multi-solitons, for the
modified Korteweg-de Vries equation the inverse scattering also
generates breathers and $N$-poles (see~\cite{Wa73,WaOh82}), which are not yet taken into
account by any soliton resolution statement. Observe  that~\eqref{eq:mkdv} possesses even more complicated solutions like
self-similar solutions (see~\cite{CoCoVe20} for their asymptotic
behavior in Fourier space).

One of the major questions related to multi-solitons is their stability
with respect to the dynamics of the equation. In the case of the
classical Korteweg-de Vries equation, this question was settled in 1993
by Maddocks and Sachs~\cite{MaSa93}: $N$-solitons are stable in
$H^N(\R)$. Our goal in this paper is to establish the counter-part of this result in the case of the modified Korteweg-de Vries equation.

Our main result is the following.

\begin{theorem}
 \label{thm:stability}
  Given $N\in\mathbb N,$ $N\geq1$, a collection of speeds $\mathbf c=(c_1,\dots,c_N)$ with $0<c_1<\cdots<c_N$ and a collection of phases $\mathbf x=(x_1,\dots,x_N)\in\R^N$, let $U_{\mathbf c}^{(N)}(\cdot,\cdot;\mathbf x)$ be the corresponding multi-soliton given by~\eqref{eq:def-multi-solitons}. For any $\eps>0$, there exists $\delta>0$ such that  for any $u_0\in H^N(\R)$, the following stability property holds. If
 \[
\norm{u_0-U_{\mathbf c}^{(N)}(0,\cdot,\mathbf x)}_{H^N}<\delta,
\]
then for any $t\in\R$ the corresponding solution $u$ of~\eqref{eq:mkdv} verifies
 \[
\inf_{\tau\in\R,\mathbf y\in\R^N}\norm{u(t)-U_{\mathbf c}^{(N)}(\tau,\cdot,\mathbf y)}_{H^N}<\eps.
\]
\end{theorem}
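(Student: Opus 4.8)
The plan is to adapt the variational strategy of Maddocks and Sachs~\cite{MaSa93} to the cubic nonlinearity, the new difficulty being that the relevant linearized operators have order $2N$ and must be controlled through the recursion operator of~\eqref{eq:mkdv}. Write $H_1,\dots,H_{N+1}$ for the first $N+1$ conserved quantities of the mKdV hierarchy, normalized so that $H_1=\tfrac12\int u^2$, $H_2$ is the energy, and $H_{k+1}$ controls $\norm{\partial_x^{k}u}_{L^2}^2$ to leading order. The first step is the variational characterization: for each admissible $\mathbf c$ there is a vector of Lagrange multipliers $(\mu_1(\mathbf c),\dots,\mu_N(\mathbf c))\in\R^N$ such that the $N$-soliton profiles are critical points of
\[
\mathcal S_{\mathbf c}:=H_{N+1}+\sum_{j=1}^{N}\mu_j(\mathbf c)\,H_j,
\]
the multipliers being read off from the stationary equation for the $(N+1)$-th flow of the hierarchy modified by the lower flows. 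Thus $\mathcal S_{\mathbf c}'[U_{\mathbf c}^{(N)}(t,\cdot,\mathbf x)]=0$ for every $t$ and every $\mathbf x$, and differentiating this identity in the phase parameters shows that the $N$ functions $\partial_{x_j}U_{\mathbf c}^{(N)}(t,\cdot,\mathbf x)$ lie in the kernel of the Hessian $\mathcal H_{\mathbf c}(t):=\mathcal S_{\mathbf c}''[U_{\mathbf c}^{(N)}(t,\cdot,\mathbf x)]$; one checks moreover that $\partial_t U_{\mathbf c}^{(N)}\in\Span\{\partial_{x_j}U_{\mathbf c}^{(N)}\}$, so the time translation produces no extra kernel direction.

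The heart of the argument is the spectral analysis of $\mathcal H_{\mathbf c}(t)$, a self-adjoint differential operator of order $2N$ on $L^2(\R)$. The claim is that, uniformly in $t\in\R$, it has exactly $N$ negative eigenvalues, an $N$-dimensional kernel equal to $\Span\{\partial_{x_j}U_{\mathbf c}^{(N)}(t,\cdot,\mathbf x)\}$, and is coercive on $H^N(\R)$ transversally to these directions. Two ingredients make this tractable. First, using the recursion operator $\mathcal R$ of~\eqref{eq:mkdv} one factorizes $\mathcal H_{\mathbf c}(t)$, up to a relatively compact perturbation, in a congruence form $\mathcal A^{*}\widetilde{\mathcal H}\mathcal A$ with $\mathcal A$ of lower order, and invokes a generalized Sylvester law of inertia to reduce $\inertia(\mathcal H_{\mathbf c}(t))$ to the inertia of a lower-order operator; iterating peels off one order of the hierarchy at a time. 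Second, the integrable structure guarantees that $\inertia(\mathcal H_{\mathbf c}(t))$ is independent of $t$ on the whole extended timeline, so it suffices to compute it as $t\to\pm\infty$, where $U_{\mathbf c}^{(N)}(t)$ separates into $N$ well-separated $1$-solitons and $\mathcal H_{\mathbf c}(t)$ converges, in the appropriate local sense, to a block-diagonal sum of the $1$-soliton Hessians $\mathcal H_{c_j}$. A direct study of each $\mathcal H_{c_j}$ --- its essential spectrum and the spectrum at infinity of the associated polynomial in $\mathcal R$ evaluated on the profile $Q_{c_j}$ --- shows that each block contributes exactly one negative eigenvalue and one zero eigenvalue, yielding $n_-=N$ and $n_0=N$ in the limit and hence for every $t$.

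With the spectral picture in hand, the third step is constrained coercivity. Near $U_{\mathbf c}^{(N)}(t,\cdot,\mathbf x)$ the level set $\Sigma=\{v\in H^N:H_j(v)=H_j(U_{\mathbf c}^{(N)}),\ 1\le j\le N\}$ is a codimension-$N$ submanifold to which the $N$ kernel directions are tangent; a further inertia/congruence computation, relating the negative cone of $\mathcal H_{\mathbf c}(t)$ to the signature of the $N\times N$ matrix $\bigl(\partial_{c_k}H_j(U_{\mathbf c}^{(N)})\bigr)_{j,k}$ (which one verifies is definite because the speeds are distinct and positive), shows that $\mathcal H_{\mathbf c}(t)$ is positive definite on the subspace of $T\Sigma$ orthogonal to the kernel, with a constant uniform in $t$. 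One then concludes by the standard modulation/Lyapunov scheme: given $u_0$ with $\norm{u_0-U_{\mathbf c}^{(N)}(0,\cdot,\mathbf x)}_{H^N}<\delta$, for each $t$ one selects parameters $\tau(t),\mathbf y(t)$ so that $w(t):=u(t)-U_{\mathbf c}^{(N)}(\tau(t),\cdot,\mathbf y(t))$ is orthogonal to the kernel, uses conservation of $H_1,\dots,H_{N+1}$ together with the expansion
\[
\mathcal S_{\mathbf c}(u(t))-\mathcal S_{\mathbf c}\bigl(U_{\mathbf c}^{(N)}\bigr)=\tfrac12\,\scalar{\mathcal H_{\mathbf c}(\tau(t))w(t)}{w(t)}+o\bigl(\norm{w(t)}_{H^N}^2\bigr),
\]
and absorbs the small mismatch of the constraints by slightly perturbing the reference speeds (implicit function theorem for $\mathbf c\mapsto(H_1,\dots,H_N)(U_{\mathbf c}^{(N)})$). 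Coercivity then yields $\norm{w(t)}_{H^N}<\eps$ for all $t$, which is the assertion.

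I expect the main obstacle to be the second step: proving that the Morse index and nullity of the order-$2N$ operator $\mathcal H_{\mathbf c}(t)$ are exactly $N$ and $N$ for every $t$. This requires both the new operator identity --- the recursion-operator factorization feeding a generalized Sylvester law of inertia, which brings the order down to a case computable by hand --- and a delicate continuity argument transporting the count from the fully decoupled configuration at $t=\pm\infty$ back to finite times while keeping control of the essential spectrum of the linearized $1$-soliton operators.
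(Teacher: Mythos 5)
Your overall architecture is exactly the one the paper follows: variational characterization of the $N$-soliton as a critical point of $H_{N+1}+\sum_j\lambda_jH_j$, preservation of the inertia of the Hessian along the extended timeline, decoupling at $t\to\pm\infty$ into $1$-soliton blocks, factorization of each block through the recursion operator and a generalized Sylvester law, and finally the Maddocks--Sachs criterion $n(\mathcal L_N)=p(D)$. But your spectral counts are wrong, and this is precisely the point where mKdV departs from the classical KdV case you are transposing.

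You assert that each $1$-soliton block $\mathcal H_{c_j}=S_N''(Q_{c_j})$ contributes exactly one negative eigenvalue, so that $n_-=N$, and that the constraint matrix $\bigl(\partial_{c_k}H_j(U^{(N)})\bigr)$ is definite. Neither holds. The quadratic form of $S_N''(Q_{c_j})$ evaluated on the scaling direction $\Lambda Q_{c_j}$ equals $-c_j^{-1/2}\prod_{k\neq j}(c_k-c_j)$, whose sign is $(-1)^{j}$ when $0<c_1<\cdots<c_N$; consequently $S_N''(Q_{c_j})$ has one negative eigenvalue when $j$ is odd and \emph{none} when $j$ is even, so the total Morse index is $\lfloor (N+1)/2\rfloor$, not $N$. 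Likewise $\partial H_j(Q_{c_i})/\partial c_i=(-1)^j c_i^{(2j-1)/2}$ alternates in sign with $j$, and after the congruence by the Vieta matrix the Hessian $D$ is diagonal with entries $(-1)^{N-1}c_j^{-1/2}\prod_{k\neq j}(c_j-c_k)$, which also alternate in sign: $D$ is indefinite for $N\geq 2$, with exactly $\lfloor (N+1)/2\rfloor$ positive eigenvalues. The Maddocks--Sachs criterion is still satisfied because the two counts agree, but establishing that agreement requires the sign bookkeeping you have skipped; as written, your intermediate claims ($n_-=N$, $D$ definite) are false and the steps that would "verify" them cannot be carried out. The rest of your outline (kernel of dimension $N$ spanned by the phase derivatives, iso-inertia transport from infinity, augmented-Lagrangian/modulation conclusion) matches the paper.
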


Some discussion of the notion of stability obtained in Theorem~\ref{thm:stability} is in order, as many possible notions of stability exist, already for single solitons, and even more in the case of multi-solitons.  Observe that for the comprehension we have neglected in the statement  of Theorem~\ref{thm:stability} a redundancy in the stability expression, as we in fact have
 \[
 \{U_{\mathbf c}^{(N)}(\tau,\cdot,\mathbf y):\tau\in\R,\mathbf y\in\R^N\}=
 \{U_{\mathbf c}^{(N)}(0,\cdot,\mathbf y):\mathbf y\in\R^N\}.
 \]
Our stability statement is valid for the whole timeline, from infinity in the past to infinity in the future. This feature is usually specific to integrable equations, we should comment later on stability statements obtained for only one end of the timeline in non-integrable models. The stability statement could be reformulated in terms of stability of a set in the following way. A set is said to be \emph{stable} if any solution with initial data close to this set will remain close to this set for all time. Different kind of sets can be considered, for example the time orbit of the multi-soliton, the family of multi-soliton profiles (with same speeds), the set of (local or global) minimizers of some variational problems. For solitons of~\eqref{eq:mkdv}, it is known that these three sets coincide. However, it is not always the case. In particular, the first two sets are different as soon as we consider $N$-solitons with $N\geq 2$, and our stability result concerns the second set. It is indeed not hard to verify using the explicit formula of the $N$-solitons that the time orbit of the $N$-solitons cannot be stable (to make our result a time-orbit stability result, one would need to include all possible time-orbits under the  $N$ first Hamiltonian flows of the~\eqref{eq:mkdv} hierarchy, see e.g. the discussion in~\cite[p. 869]{MaSa93}). A typical result of stability of the third kind of sets (i.e. sets of minimizers) is the seminal work of  Cazenave and Lions~\cite{CaLi82}. The flexibility and versatility of variational technics makes the
stability of this kind of sets easier to obtain, but leads to potentially weaker stability statements unless some uniqueness or non-degeneracy of the minimizers is established. Unfortunately, uniqueness statements are most of the time widely open problems (for more in this direction, see the recent work of Albert~\cite{Al19} in the case of the classical Korteweg-de Vries equation for a uniqueness result for the two-solitons). In our case, we are able to obtain the non-degeneracy property in the same process as a local minimization property. 

Observe here that, while solutions behaving at both ends of the time line as pure sums of solitons are probably bound to exist only in integrable cases, it is nevertheless possible to obtain multi-soliton solutions for  non-integrable equations if the behavior is expected only at positive (or negative) large times.
In the framework  of the
nonlinear Schr\"odinger equation, in 1990, Merle~\cite{Me90} obtained a first
existence result  for the mass-critical case. Since then, many existence results for multi-solitons have been obtained
in different settings (see~\cite{BeGhLe14,CoLe11,CoMaMe11,LeLiTs15,LeTs14,MaMe06,WaCu15,Wa16,WaCu17,Wa17a} among many others).  In the
framework of Korteweg-de Vries type equations, existence (and
uniqueness) of multi-solitons in non-integrable cases was first
established by Martel~\cite{Ma05}. Stability of multi-solitons for generalized Korteweg-de Vries
equations was obtained by Martel, Merle and Tsai in~\cite{MaMeTs02} (see
also~\cite{AlBoNg07} for related developments). Using a similar
approach, some stability results have been obtained in the nonlinear
Schr\"odinger case (see~\cite{MaMeTs06} and more recently~\cite{LeWu18}), but the results are only partial and stability of
multi-solitons remains essentially an open problem in the Schr\"odinger case. In
the case of the classical Korteweg-de Vries equation, results
combining the approaches of~\cite{MaSa93} and~\cite{MaMeTs06} have
been obtained by Alejo, Mu\~{n}oz and Vega~\cite{AlMuVe13}, with in particular results of
stability and asymptotic stability in $L^2(\R)$ for multi-solitons. A
detailed overview of these results is offered by Mu\~{n}oz in~\cite{Mu14}.
Let us also mention the asymptotic stability results obtained for generalized
Korteweg-de Vries equations in~\cite{GePuRo16,PeWe94}.

The premises of the stability analysis of $N$-solitons  may be found in the pioneering work of Lax~\cite{La68}, in which in particular the variational principle satisfied by multi-solitons of the Korteweg-de Vries equation is given. However, it is Maddocks and Sachs~\cite{MaSa93} who laid the cornerstone for the stability
analysis of multi-solitons in integrable equations. Their approach relies
essentially on spectral and variational arguments, and makes no (direct) use of inverse
scattering. The integrable nature of the equation is used essentially
in two aspects: first, for the explicit formulas for multi-solitons,
second for the construction of an infinite sequence of conservation
laws. Indeed, the central point of~\cite{MaSa93} is to characterize
$N$-solitons as (local) minimizers of the $(N+1)$-th conserved quantity subject
to fixed constraints on the $N$ first conserved
quantities. In a way, this argument is to be related to the theories
developed by Benjamin, Bona, Grillakis, Shatah and Strauss~\cite{Be72,Bo75,GrShSt87,GrShSt90} for the stability of
a single solitary wave. 

The ideas developed by Maddocks and Sachs have been successfully implemented to obtain stability results in various settings.
Neves and Lopes~\cite{NeLo06} proved the stability of the two-solitons of the Benjamin-Ono equation. Alejo and Mu\~{n}oz~\cite{AlMu13} established the stability of~\eqref{eq:mkdv} breathers (which can be formally seen as counterparts of two-solitons for complex speeds). 
Spectral stability for multi-solitons in the KdV hierarchy was considered by Kodoma and Pelinovsky~\cite{KoPe05}.
We also mention the work of Kapitula~\cite{Ka07}, which is devoted to the stability of $N$-solitons of a large class of integrable systems, including in particular the model cubic nonlinear Schr\"odinger equation. Very recently, a variational approach was used by Killip and Visan~\cite{KiVi20} to obtain the stability of multi-solitons of the classical Korteweg-de Vries equation in weak regularity spaces  (up to $H^{-1}(\R)$ !).
Finally, a stability result in low regularity  $H^s$-spaces was also obtained very recently by Koch and Tataru~\cite{KoTa20} for the multi-solitons of both modified Korteweg-de Vries equation and the cubic nonlinear Schr\"odinger equation. This result contains ours, as it is valid in particular for $s=N$. The proof is however much more involved and relies on a extensive analysis of an iterated B\"acklund transform.

The major difference between our approach and the approach of Maddocks and Sachs lies in the analysis of spectral properties. In particular, we develop in the context of~\eqref{eq:mkdv}, and for $N$-solitons, ideas introduced by Neves and Lopes~\cite{NeLo06} for the analysis of the two-solitons of the Benjamin-Ono equation. Indeed, the spectral analysis of Maddocks and Sachs and many of their continuators relies on an extension of Sturm-Liouville theory to higher order differential equations (see~\cite[Section 2.2]{MaSa93} and~\cite{Gr91}). As the Benjamin-Ono equation is non-local, Neves and Lopes~\cite{NeLo06} were lead to introduce a new strategy relying on iso-inertial properties of linearized operators. It turns out that this type of argument can also be implemented for local problems such as~\eqref{eq:mkdv}. Our first task was to extend the spectral theory of Neves and Lopes~\cite{NeLo06} to an arbitrary number $N$ of composing solitons. Apart from an increased technical complexity (inherent to the fact that the number of composing solitons is now arbitrary), no major difficulty arises here. Then our second task was to implement this spectral theory for the multi-solitons of~\eqref{eq:mkdv}. At that level, we had to overcome major obstacles. Most of the existing works content themselves with the simpler analysis of two-solitons, for which many informations can be obtained by brut force (it is said in~\cite{NeLo06}: ``It is likely that our method can be extended to multi-solitons of the BO equation and of its hierarchy but the algebra may become prohibitive''). Hence, to deal with the arbitrary $N$ case, it was necessary to acquire a deeper understanding of the relationships between $N$-solitons, the variational principle that they satisfy, and the spectral properties of the operators obtained by linearization of the conserved quantities around them. 

We now present the process leading to the proof of our main result Theorem~\ref{thm:stability}.  

We first review in Section~\ref{sec:preliminaries} the results gravitating around our main topic of interest. We recall the well-posedness of the Cauchy problem, and remind the reader that the conservation laws for~\eqref{eq:mkdv} may be obtained from one another using a \emph{recursion formula} (see~\eqref{eq:recursion}) involving the first derivative of consecutive conservation laws and what we call the \emph{recursion operator} $\mathcal K$ (see~\eqref{eq:skew K}). We also recall the formulas for solitons and multi-solitons.

Section~\ref{sec:vari-princ} is devoted to the next step: establishing the variational principle verified by the multi-solitons, i.e. to construct a functional $S_N$ of which $N$-solitons are critical points. The form of the variational principle as well as some elements of proof were given by Lax~\cite{La68}. Holmer, Perelman and Zworski~\cite{HoPeZw11} later established a rigorous proof for the $2$-solitons, which we adapt here to the case of $N$-solitons. The proof proceeds into two steps. First, as $N$-solitons are decomposing at time infinity as decoupled solitons, the variational principle that they possibly satisfy should also be verified by each of their composing solitons. As a consequence, the coefficients of the variational principle are determined by the speeds of the composing solitons. Second, we prove that the $N$-solitons indeed verify the conjectured variational principle by a rigidity argument on the differential equation verified by a remainder term. The proof given here is analytic in spirit and makes little use of the algebraic structure of the problem. Alternative strategies to obtain a similar result using the inverse scattering approach are possible, see e.g.~\cite{Ka07,LiWa20}.

Given the functional $S_N$ admitting a $N$-soliton as critical point, we hold a natural candidate for a Lyapunov functional allowing to prove stability. Indeed, it was proved by  Maddocks and Sachs that if one can equate the number of negative eigenvalues of the operator corresponding to the Hessian with the number of positive principal curvatures of the solution surface (see Proposition~\ref{prop:7.1.} or~\cite[Lemma 2.3]{MaSa93}), then a Lyapunov functional based on an augmented Lagrangian may be constructed and stability follows (the reader familiar with the stability theory of single solitons will recognize in these two criteria the equivalent for multi-solitons of the spectral and slope conditions rendered famous  by Grillakis, Shatah and Strauss~\cite{GrShSt87}). The spectral analysis represents the major task and is spread on two sections.

At first, in Section~\ref{sec:spectral-theory}, one needs to extend to the $N$-soliton case the theory developed by Neves and Lopes~\cite{NeLo06} in the case of $2$-solitons. Indeed, in the spectral analysis of linearized operators, a major difference appears between solitons and multi-solitons: whereas it is possible for solitons to consider the perturbation at the profile level and therefore to work with operators having time independent potentials, the operators associated with multi-solitons have inherently time dependent potentials. To overcome this difficulty, and somehow to go back to time-independent potentials, one needs a relation between the spectral structure along the time evolution and the spectral structure at time infinity (where the decoupling between solitons brings us back to the case of $1$-solitons). This comes in the form of the \emph{preservation of inertia property}, i.e. the numbers of negative and zero eigenvalues are constant along the extended timeline (see Proposition~\ref{prop:iso-inertia} and Corollary~\ref{cor:asymptotic-inertia}).

With this tool in hand, the spectral analysis is obtained as the spectral analysis of the linearized operator at infinity, which is itself the combination of the spectral
analysis of the linearized operators around each of the composing solitons. In Section~\ref{sepecsec}, the later analysis is made possible by a remarkable factorization identity (see Proposition~\ref{prop:factorization_L_N_j}), which we obtain thanks to the recursion properties of the linearized conserved quantities around each soliton. Indeed, given $Q_j$ the $j$-th soliton profile, one may introduce the operators 
\[
  M_j=Q_j\partial_x\left(\frac{\cdot}{Q_j}\right),\quad M_j^t=\frac{1}{Q_j}\partial_x\left(Q_j\,\cdot\,\right),
\]
and, denoting the linearized operator around $Q_j$ by $L_{N,j}:=S_N''(Q_j)$, we have
\[
M_jL_{N,j}M_j^t=M_j^t\left(\prod_{k=1}^N(-\partial_x^2+c_k)\right)M_j,
\]
which allows us to obtain the necessary spectral informations.

Finally, in Section~\ref{stasec7}, we compute the number of positive principal curvatures for the multi-soliton surface by an astute use of the (matrix) Sylvester's law of inertia combined with the relations between the coefficients of the candidate Lyapunov functional and the speeds of the multi-soliton. The stability of the $N$-soliton is then a consequence of the combination of the previous arguments.

\section{Preliminaries}
\label{sec:preliminaries}

In this section we collect some preliminary results on~\eqref{eq:mkdv}.

\subsection{Hamiltonian structure and conserved quantities}

The first few conserved quantities of~\eqref{eq:mkdv} are given by
\begin{align}
\text{(mass)} \quad H_0(u)&:=\int_{\mathbb R}u\rmd x,\nonumber\\
\text{(momentum)} \quad H_1(u)&:=\frac12\int_{\mathbb R}u^2\rmd x,\label{momentum}\\
\text{(energy)} \quad H_2(u)&:=\frac{1}{2}\int_{\mathbb R} u_x^2-\frac{1}{4}\int_{\mathbb R}u^{4}\rmd x, \label{ener}\\
 \text{(second energy)} \quad  H_3(u)&:=\frac{1}{2}\int_{\mathbb R}u_{xx}^2\rmd x+\frac{1}{4}\int_{\mathbb R}u^{6}\rmd x-\frac52\int_{\mathbb R}u^2u_x^2\rmd x. \nonumber
\end{align}
In general, for $n\in\mathbb N$, the conserved quantities of~\eqref{eq:mkdv} are of the form
\begin{equation*}
H_n(u):=\frac12\int_{\mathbb R}u_{(n-1)x}^2\rmd x+\int_{\mathbb R}q_n(u, u_x,\dots,u_{(n-2)x} )\rmd x,
\end{equation*}
where $q_n$ is a polynomial which might be explicitly calculated. Various strategies are possible to generate the conserved quantities of~\eqref{eq:mkdv}. In particular, one might rely on the following Lenard recursion identity. For $u\in\mathcal S(\R)$ (the Schwartz space of fast-decaying smooth functions), define the \emph{recursion operator} $\mathcal K$ by
\begin{equation}
 \label{eq:skew K}
\mathcal K(u):=-\partial_x^3-2u^2\partial_x-2u_x\partial_x^{-1}(u\partial_x),\quad \partial_x^{-1}u:=\frac12\left(\int_{-\infty}^xu(y)\rmd y-\int_x^{\infty}u(y)\rmd y\right).
\end{equation}
For all $n\geq0$, we have the recursion formula (see~\cite{Ol86} or~\cite[formula (2.4)]{HoPeZw11})
\begin{equation}
 \label{eq:recursion}
\partial_x H_{n+1}'(u)= \mathcal K(u) H_n'(u).
\end{equation}
The modified Korteweg-de Vries equation~\eqref{eq:mkdv} is a Hamiltonian system of the form
\[
u_t=\partial_x H_2'(u).
\]
The recursion formula readily leads to another Hamiltonian expression for~\eqref{eq:mkdv}:
\[
u_t=\mathcal K(u) H_1'(u).
\]
This bi-Hamiltonian nature allows to consider the \emph{mKdV hierarchy}, a  generalized
class of equations given by
\[
u_t=\partial_x H_{n+1}'(u)=\mathcal{K}(u) H_{n}'(u), \quad n\in\mathbb{N}.
\]
In particular, the functionals $H_n$ are constant along the flow of all equations in the hierarchy.

A substantial body  of works is available regarding the Cauchy problem for the modified Korteweg-de Vries equation~\eqref{eq:mkdv}. In particular, one may refer to the celebrated works of Kenig, Ponce and Vega~\cite{KePoVe93} and Colliander, Keel, Staffilani, Takaoka,  and Tao~\cite{CoKeStTaTa03}, or see some of the recent books on the topics~\cite{KoTaVi14,LiPo15,Zh01}.
In this work, we will make use of the following property, which has been established in a streamlined proof (using only the necessary elements of~\cite{KePoVe93}) by Holmer, Perelman  and Zworski~\cite{HoPeZw11}. For all $k\in\mathbb{N}$, given any initial data $u_0\in H^k(\R)$ there exists a unique global solution $u\in\mathcal C(\R,H^k(\R))$ of~\eqref{eq:mkdv} such that $u(0)=u_0$. Moreover, the data-to-solution map is continuous and $H_j(u)$ is preserved by the flow  for $j=1,\dots,k+1$. 

\subsection{Solitons and Multi-solitons}

The inverse scattering method allows, by purely algebraic technics, to calculate explicitly
solutions of~\eqref{eq:mkdv} (at least for rapidly decreasing
solutions) and we now give a quick review of some solutions which have
been constructed for~\eqref{eq:mkdv}. Details of the constructions are
given in~\cite{Hi72,Wa73,WaOh82}. Recent progress using the inverse scattering approach (including a soliton resolution result and asymptotic stability of multi-solitons in weighted spaces) are reported in~\cite{ChLi19}.

We start with the solitons. A \emph{soliton} of~\eqref{eq:mkdv} is a
traveling wave solution of the form
\[
u(t,x)=Q_c(x-ct+x_0),
\]
where $c\in\R$ is the speed and $x_0$ is the initial position. The
profile $Q_c$ satisfies the ordinary differential equation
 \begin{equation}
 \label{eq:stationary}
-\partial_{xx}Q_c+cQ_c-Q_c^3=0.
\end{equation}
The soliton profile $Q_c$ can be proved to be a minimizer of the energy $H_2$ (see~\eqref{ener}) under the momentum (see~\eqref{momentum}) constraint $H_1(u)=H_1(Q_c)=2\sqrt{c}$. Up to sign change and translation, there exists a
unique positive even solution to the profile equation~\eqref{eq:stationary}, which is
explicitly given by the formula
 \begin{equation}
 \label{eq:Q}
Q_c(x)=\sqrt{c}Q(\sqrt{c}x),\quad Q(x)=\sqrt{2}\sech(x).
\end{equation}
To make a link with what follows, note that the $1$-soliton with speed
$c_1$ and shift parameter $x_1$ can be
written in the form
\[
U_{c_1}(t,x;x_1)=2\sqrt{2}\partial_x\left(\arctan\left( e^{s_1} \right)\right),
\]
where $s_1=\sqrt{c_1}(x-c_1t)+x_1$.

Solitons form the building blocks for more complicated dynamics of~\eqref{eq:mkdv}, which we now present, starting with $2$-solitons.

Given speeds $c_1,c_2>0$, $c_1\neq c_2$ and shift parameters
$x_1,x_2\in\R$, a \emph{$2$-soliton} is a solution of~\eqref{eq:mkdv} given
by 
 \begin{equation}
U_{c_1,c_2}(t,x;x_1,x_2)=2\sqrt{2} \partial_x \left( \arctan \left( \frac{e^{s_1}+e^{s_2}}{1-\rho^2e^{s_1+s_2}}\right) \right),\label{eq:2soliton}
\end{equation}
where $s_j:=\sqrt{c_j}(x-c_jt)+x_j$ for $j=1,2$, and $\rho:=\frac{\sqrt{c_1}-\sqrt{c_2}}{\sqrt{c_1}+\sqrt{c_2}}$. 
Asymptotically in time, this solution decomposes into a sum of two
$1$-solitons traveling at speeds $c_1$ and $c_2$. More precisely,
there exist $x_1^\pm,x_2^\pm$ depending explicitly on $c_1$, $c_2$, $x_1$, $x_2$
such that 
\begin{equation*}
 \lim_{t\to\pm\infty}\norm{U_{c_1,c_2}(t,\cdot;x_1,x_2)-Q_{c_1}(\cdot-c_1t-x_1^\pm)-Q_{c_2}(\cdot-c_2t-x_2^\pm)}_{H^1}=0.
\end{equation*}
As can be observed in the above formula, in the $2$-solitons the interaction between the
two composing solitons is smooth and its only consequence is a shift
in the trajectories, as $x_j^-\neq x_j^+$ for $j=1,2$.

Observe here that  when $c_1=c_2$, there exist also solutions behaving at time infinity as two solitons traveling at the same speed and going away at logarithmic rate (see~\cite{WaOh82}). Those solutions, called double-poles, are however given by a formula different from~\eqref{eq:2soliton} and are not included in the results of the present paper. Our progress in the analysis of such solutions will be reported in a future work.

The formula for $N$-solitons for generic $N$ is slightly more complicated but has a similar form.

Given $N\in\mathbb N$, speeds $0<c_1<\cdots<c_N$, phases $x_1,\dots,x_N\in\R$, a $N$-soliton solution is given by
 \begin{equation}
 \label{eq:def-multi-solitons}
U_{c_1,\dots,c_N}(t,x;x_1,\dots,x_N)=2\sqrt{2}\partial_x \left(
 \arctan \left( \frac{g(t,x)}{f(t,x)}\right)\right),
\end{equation}
where  the functions $f$ and $g$ are given by 
 \begin{align*}
f(t,x)&=\sum_{n=0}^{\left\lfloor\frac N2\right\rfloor}\sum_{\sigma\in\mathfrak{C}_{2n}^N}a(\sigma)\exp\left( s_{\sigma(1)}+\cdots+s_{\sigma(2n)} \right),\\
  g(t,x)&=\sum_{n=0}^{\left\lfloor\frac{ N-1}2\right\rfloor}\sum_{\sigma\in\mathfrak{C}_{2n+1}^N}a(\sigma)\exp\left( s_{\sigma(1)}+\cdots+s_{\sigma(2n+1)} \right).
 \end{align*}
Here, $\left\lfloor\frac N2\right\rfloor$ denotes the integer part of $\frac N2$ and $\mathfrak{C}_{2n}^N$ is the set of all possible combinations of $2n$ elements among $N$. The variables $s_j$ are given for $j=1,\dots,N$ by
\[
s_j:=\sqrt{c_j}\left(x-c_jt\right)+x_j.
\]
The function $a$ is build upon 
the functions $\tilde a$  given by
\[
\tilde a(k,l):=-\left(\frac{\sqrt{c_l}-\sqrt{c_k}}{\sqrt{c_l}+\sqrt{c_k}}\right)^2,
\]
and for $n\geq1$ and $\sigma:=\{i_1,\dots,i_{2n}\}$, we set
 \[
a(\sigma):=\prod_{1\leq k<l\leq 2n}\tilde a(i_k,i_l)
\]
and $a(\sigma)=1$ otherwise (i.e. if $\sigma$ is not in the above form).

It was shown in~\cite{Hi72} that the $N$-soliton solutions given by the above formula decompose at positive and negative time infinity as sums of solitons. As was shown by Martel~\cite{Ma05}, they are the unique solutions of~\eqref{eq:mkdv} having this prescribed behavior.

\section{The Variational Principle}
\label{sec:vari-princ}

We analyze in this section the variational principle satisfied by
multi-solitons.  

We first observe that the differential equation~\eqref{eq:stationary} verified by the soliton profile and the recursion relation~\eqref{eq:recursion} imply that the $1$-soliton $U_{c_1}(t)\equiv U_{c_1}(t,\cdot;x_1)$ with speed $c_1>0$ satisfies for all $n\geq 1$ \emph{and for any $t\in\R$} the following variational principle
\begin{equation}\label{eq:1-sol variaprinciple}
H_{n+1}'(U_{c_1}(t))+c_1 H'_{n}(U_{c_1}(t))=0.
\end{equation}
For future reference,
we calculate here the  quantities $H_j(Q_{c_1})$ related to the $1$-soliton profile $Q_{c_1}$. Multiplying~\eqref{eq:1-sol variaprinciple} with $\frac{\rmd Q_{c_1}}{\rmd {c_1}}$, for each $j$, we get
\[
\frac{\rmd H_{j+1}(Q_c)}{\rmd c}_{|c=c_1}=-c_1\frac{\rmd H_{j}(Q_c)}{\rmd c}_{|c=c_1}=\cdots=(-c_1)^j\frac{\rmd H_{1}(Q_c)}{\rmd c}_{|c=c_1}=(-1)^jc_1^{\frac{2j-1}{2}},
\]
and therefore
\begin{equation}\label{eq:jHamliton}
H_{j+1}(Q_{c_1})=(-1)^j\frac{2}{2j+1}c_1^{\frac{2j+1}{2}}.
\end{equation}

It can be verified by explicit calculations that the $2$-soliton $U_{c_1,c_2}(t) \equiv U_{c_1,c_2}(t,\cdot,x_1,x_2)$ with speeds $0<c_1< c_2$ satisfies for all $n\geq 1$ \emph{and for any $t\in\R$} the variational principle
\begin{equation*}
H_{n+2}'(U_{c_1,c_2}(t))+(c_1+c_2) H_{n+1}'(U_{c_1,c_2}(t))+c_1c_2H_{n}'(U_{c_1,c_2}(t))=0.
\end{equation*}

Using the explicit expression~\eqref{eq:def-multi-solitons} for the $N$-solitons, it would in theory be possible to verify by hand for any given $N$ that they also satisfy variational principles. Calculations would however rapidly become unmanageable when $N$ grows. In the following, we provide an analytic proof that the multi-solitons indeed verify a variational principle.
This fact is
commonly accepted but rarely proved. We base here our proof on the
approach outlined by Lax~\cite{La68} and later rigorously developed by Holmer, Perelman and
Zworski~\cite{HoPeZw11}.

\begin{proposition}
 \label{prop:vari-princ}
  Let $U:\R_t\times\R_x\to\R$ be a solution of~\eqref{eq:mkdv} and assume
  that there exist $N\in\mathbb N\setminus\{0\}$, $0<c_1<
 \cdots< c_N$, and $x_1,\dots,x_N:\R_t\to\R$ such that
 \[
\norm[\bigg]{U(t)-\sum_{j=1}^NQ_{c_j}(\cdot-x_j(t))}_{H^{N+1}}\lesssim e^{-\frac12\sqrt{c_1}|\min_{j,k}(x_j(t)-x_k(t))|},
\]
and for all $j=1,\dots,N$, we have
\begin{equation}
 \label{eq:9}
|\partial_tx_j(t)-c_j|\lesssim \frac1t.
\end{equation}
Then there exist $\lambda_1,\dots,\lambda_{N}$ such that
for all $t\in\R$ the function $U(t)$ verifies the variational principle
 \begin{equation}
H_{N+1}'(U(t))+\sum_{j=1}^N\lambda_{j}H_j'(U(t))=0.\label{eq:n-soliton Euler}
\end{equation}
The coefficients $\lambda_j$, $j=1,\dots,N$ are uniquely
determined in terms of the speeds $c_j$, $j=1,\dots,N$. Precisely,
they are given by Vieta's formulas: for $k=1,\dots,N$ we have
 \begin{equation}
\lambda_{N+1-k}=\sum_{1\leq i_{1}<\cdots<i_{k}\leq
  N}\left(\prod_{j=1}^{k}c_{i_j}\right).\label{eq:vieta}
\end{equation}
\end{proposition}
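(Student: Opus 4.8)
The plan is to follow the two-step strategy announced before the statement: first pin down the candidate coefficients $\lambda_j$ from the asymptotic decoupling into $1$-solitons, then promote the asymptotic relation to an exact identity valid for all $t$ by a rigidity argument on an ODE satisfied by a remainder.

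\textbf{Step 1: determining the coefficients.} I would start from the observation that $H_{N+1}'$ and each $H_j'$ are differential polynomials, so evaluating the left-hand side of \eqref{eq:n-soliton Euler} on $U(t)=\sum_{j=1}^N Q_{c_j}(\cdot - x_j(t)) + (\text{small})$ and letting $t\to+\infty$, the cross terms between distinct solitons decay exponentially (because of the exponential closeness hypothesis and the separation $|x_j(t)-x_k(t)|\to\infty$, which follows from \eqref{eq:9} since the $c_j$ are distinct). Near the $j$-th soliton the expression reduces, in the limit, to $H_{N+1}'(Q_{c_j}) + \sum_{k=1}^N \lambda_k H_k'(Q_{c_j})$ evaluated as a function of $x - x_j(t)$. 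Using the $1$-soliton variational principle \eqref{eq:1-sol variaprinciple} repeatedly, $H_{n+1}'(Q_{c_j}) = -c_j H_n'(Q_{c_j}) = \cdots = (-c_j)^{n-1} H_2'(Q_{c_j})$ (more precisely one reduces everything to $H_1'(Q_{c_j})$ and $H_2'(Q_{c_j})$, which are linearly independent as functions), so requiring the limit to vanish for each $j$ forces $\sum_{k=0}^{N} \lambda_k (-c_j)^{k-1}$-type combinations to vanish, i.e. $c_j$ must be a root of the polynomial $X^N + \lambda_N X^{N-1}+\cdots+\lambda_1$ (with $\lambda_{N+1}:=1$ playing the role of the leading coefficient). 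Since the $c_j$ are $N$ distinct positive numbers, this polynomial is exactly $\prod_{j=1}^N (X - c_j)$, and matching coefficients gives Vieta's formulas \eqref{eq:vieta}; in particular the $\lambda_j$ are uniquely determined. I would take care to phrase the limit argument so that it genuinely isolates each soliton, e.g. by testing against translates $\varphi(\cdot - x_j(t))$ of fixed Schwartz functions, or by working in a moving window around $x_j(t)$.

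\textbf{Step 2: the rigidity argument.} With $\lambda_1,\dots,\lambda_N$ now fixed, define the functional $S_N := H_{N+1} + \sum_{j=1}^N \lambda_j H_j$ and set $W(t) := S_N'(U(t))$, the quantity we want to show vanishes identically. The key point is that, because each $H_n$ is conserved along \eqref{eq:mkdv} and the equation is Hamiltonian, $W(t)$ satisfies a \emph{linear} evolution equation: differentiating in time and using $u_t = \partial_x H_2'(u)$ together with the fact that $\frac{d}{dt}H_n'(U(t)) = H_n''(U(t))\,\partial_x H_2'(U(t))$, and the commutation relations among the flows in the hierarchy, one gets an equation of the schematic form $\partial_t W = \mathcal{A}(t) W$ where $\mathcal{A}(t)$ is a (time-dependent, bounded on the relevant spaces) linear operator built from $U(t)$ and its derivatives. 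I would then use Step 1: the asymptotic analysis there shows not merely that a certain limit vanishes but that $\|W(t)\|_{H^s} \to 0$ as $t\to+\infty$ for an appropriate $s$ (the exponential-closeness hypothesis with the $H^{N+1}$ norm is exactly what gives quantitative decay of $W(t)$, since $S_N'$ involves up to $2N+1$ derivatives — here one should be a little careful about how many derivatives $S_N'$ costs and whether the hypothesis as stated suffices, perhaps using that near each soliton the leading terms cancel exactly by \eqref{eq:1-sol variaprinciple} and only the exponentially small cross terms and the remainder survive). A linear ODE in a Banach space with a solution that tends to $0$ at $t=+\infty$, and whose generator satisfies suitable bounds (a Gronwall-type estimate backwards in time), must be identically zero; hence $W\equiv 0$, which is exactly \eqref{eq:n-soliton Euler}.

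\textbf{Main obstacle.} The delicate part is Step 2: making rigorous sense of the linear evolution equation for $W(t)$ in the right function space and establishing the uniqueness/backward-Gronwall estimate, because $\mathcal{A}(t)$ involves high-order derivatives of $U(t)$ and one must control it uniformly in time on a space where $W(t)$ is known to decay. One must check that $U(t)$ stays bounded in $H^{N+1}$ (or higher) uniformly in $t$ — which follows from the closeness hypothesis plus boundedness of $Q_{c_j}$ — and that the algebra of the hierarchy indeed produces a genuinely linear (not merely affine) equation for $W$, so that $W(+\infty)=0$ propagates to all $t$. A secondary technical point is verifying that the decay rate coming from the hypothesis (exponential in the soliton separation, together with \eqref{eq:9} giving separation $\gtrsim t$) beats any exponential growth in the Gronwall estimate, or more robustly, that one can run the argument on compact time intervals combined with the decay at infinity so that only the qualitative statement $W(+\infty)=0$ is needed. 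This is precisely the structure of the Holmer–Perelman–Zworski argument for $N=2$, and the expectation (consistent with the paper's own framing) is that the extension to general $N$ is a matter of bookkeeping rather than a new idea.
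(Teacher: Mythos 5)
Your Step 1 is essentially the paper's Lemma~\ref{lem:vieta}: passing to the limit $t\to\infty$, using the exponential decoupling and the $1$-soliton relations $H_{k+1}'(Q_{c_j})=(-c_j)^kH_1'(Q_{c_j})$ to identify the $-c_j$ as the roots of the polynomial with coefficients $1,\lambda_N,\dots,\lambda_1$. That part is fine. Your Step 2 also starts on the right foot: $W(t)=S_N'(U(t))$ does decay exponentially, and it does satisfy the linear (adjoint linearized) equation $\partial_t W=H_2''(U)\partial_x W$, whose derivation is exactly the content of the paper's Lemma~\ref{lem:olver} (the identity $\scalar{H_j'(\phi)}{\partial_x H_k'(\phi)}_{L^2}=0$ from the recursion operator).

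The gap is in how you close Step 2. A crude energy/Gronwall estimate for $\partial_t W=H_2''(U)\partial_x W$ gives $\bigl|\frac{\rmd}{\rmd t}\norm{W}_{L^2}^2\bigr|\leq C\norm{W}_{L^2}^2$ with $C\sim\norm{(U^2)_x}_{L^\infty}$, and there is no reason for this $C$ to be smaller than the decay rate $c_*$ of $W$ (indeed $c_*\sim\sqrt{c_1}\min_{j\neq k}\abs{c_j-c_k}$ can be arbitrarily small while $C\sim c_N^{3/2}$ is not); so integrating backwards from $t=T$ and letting $T\to\infty$ does not yield $W\equiv 0$. Your fallback --- that ``only the qualitative statement $W(+\infty)=0$ is needed'' --- is false for linear nonautonomous equations: $w'=-w$ on $\R$ has nonzero solutions tending to $0$ at $+\infty$. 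What is genuinely needed, and what constitutes the heart of the paper's proof, is a \emph{sub-exponential} growth bound for the linearized flow. The paper pairs $r(t)=W(t)$ against solutions $v$ of the linearized equation $\partial_t v=\partial_x H_2''(U)v$ with compactly supported data; the pairing $\scalar{r}{v}_{L^2}$ is exactly conserved, and the decisive estimate is $\norm{v(t)}_{H^1}^2\lesssim e^{C\sqrt t}$, obtained by a partition of unity moving with the soliton trajectories together with the coercivity~\eqref{eq:10} of $H_2''(Q_{c_j})+c_jH_1''(Q_{c_j})$ modulo its two kernel directions (whose almost-orthogonality must itself be propagated in time). Since $e^{C\sqrt t}$ beats any $e^{c_*t}$, the conserved pairing vanishes and $W\equiv0$. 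Without this localization-plus-coercivity step (or an equivalent sub-exponential bound applied directly to $W$), the rigidity argument does not close.
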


Let $\lambda_1,\dots,\lambda_N$ be given by~\eqref{eq:vieta}. 
For $u\in H^{N}(\R)$, we define the functional whose first derivative gives~\eqref{eq:n-soliton Euler} by
 \begin{equation}
 \label{eq:def-S_N}
S_N(u)=H_{N+1}(u)+\sum_{j=1}^N\lambda_{j}H_j(u).
\end{equation}

We first prove that if a solution of~\eqref{eq:mkdv} decomposes
asymptotically as a sum of solitons, then the parameters of the
variational principle it possibly satisfies are constrained
by the values of the speeds in the asymptotic decomposition and must satisfies~\eqref{eq:vieta}.

\begin{lemma}\label{lem:vieta}
  Let $U:\R_t\times\R_x\to\R$ be a solution of~\eqref{eq:mkdv} and assume
  that there exist $N\in\mathbb N\setminus\{0\}$, $0<c_1\leq
 \cdots\leq c_N$, and $x_1,\dots,x_N:\R_t\to\R$ such that
 \[
\lim_{t\to\pm\infty}\norm[\bigg]{U(t)-\sum_{j=1}^NQ_{c_j}(\cdot-x_j(t))}_{H^{N}}=0,
\]
and for all $j,k=1,\dots,N$, $j\neq k$ we have
\[
\lim_{t\to\pm\infty}|x_j(t)-x_k(t)|=\infty.
 \]
Assume also that there exist $\lambda_1,\dots,\lambda_{N}\in\R$ such that
for all $t\in\R$ the function $U(t)$ verifies the variational principle
 \begin{equation}
H_{N+1}'(U(t))+\sum_{j=1}^N\lambda_{j}H_j'(U(t))=0.\label{eq:vari-princ}
\end{equation}

Then the coefficients $\lambda_j$, $j=1,\dots,N$ are uniquely
determined in terms of the speeds $c_j$, $j=1,\dots,N$ by Vieta's formula~\eqref{eq:vieta}.
\end{lemma}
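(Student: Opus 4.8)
The plan is to exploit the asymptotic decoupling of $U(t)$ into solitons to turn the global variational identity~\eqref{eq:vari-princ} into a collection of scalar identities indexed by the composing speeds $c_j$. The key observation is that if $U(t)$ satisfies~\eqref{eq:vari-princ} for all $t$, and $U(t)\to\sum_j Q_{c_j}(\cdot - x_j(t))$ in $H^N$ with the distances $|x_j(t)-x_k(t)|\to\infty$, then by continuity of the derivatives $H_n'$ of the conserved quantities (as maps $H^N\to H^{-N}$, say) and the fact that the soliton profiles are exponentially localized, the identity~\eqref{eq:vari-princ} should "localize" near each bump. Concretely, I would fix an index $j_0$, translate by $-x_{j_0}(t)$, and pass to the limit $t\to+\infty$: the translated $U$ converges to $Q_{c_{j_0}}$ (the other bumps escape to spatial infinity and contribute nothing on compact sets), so in the limit one obtains
\[
H_{N+1}'(Q_{c_{j_0}}) + \sum_{k=1}^N \lambda_k H_k'(Q_{c_{j_0}}) = 0.
\]
This must be justified carefully because $H_n'$ involves derivatives up to order $2n-1$ and the convergence is only in $H^N$; one uses that $S_N$ only involves $H_1,\dots,H_{N+1}$, whose derivatives land in $H^{-N}$, and that translation is an isometry, plus the elliptic regularity/smoothness of the profiles to make sense of the limit.

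Next I would use the single-soliton variational identities~\eqref{eq:1-sol variaprinciple}, which state $H_{n+1}'(Q_{c_{j_0}}) = -c_{j_0} H_n'(Q_{c_{j_0}})$ for every $n\geq 1$. Iterating this, every $H_k'(Q_{c_{j_0}})$ with $k\geq 1$ is expressible as $(-c_{j_0})^{k-1}H_1'(Q_{c_{j_0}}) = (-c_{j_0})^{k-1}Q_{c_{j_0}}$, and likewise $H_{N+1}'(Q_{c_{j_0}}) = (-c_{j_0})^N Q_{c_{j_0}}$. Substituting into the localized identity and using that $Q_{c_{j_0}}\not\equiv 0$, I get the scalar polynomial equation
\[
(-c_{j_0})^N + \sum_{k=1}^N \lambda_k (-c_{j_0})^{k-1} = 0,
\]
i.e. $c_{j_0}$ is a root of the degree-$N$ polynomial $P(X) = X^N - \lambda_N X^{N-1} + \lambda_{N-1}X^{N-2} - \cdots + (-1)^{N}\lambda_1$ (after tracking signs carefully). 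Since $0 < c_1 \leq \cdots \leq c_N$ — and here one needs the $c_j$ to actually be distinct, or else an extra argument — running $j_0$ over $1,\dots,N$ exhibits $N$ roots of a monic degree-$N$ polynomial, hence $P(X) = \prod_{j=1}^N (X - c_j)$. Comparing coefficients yields exactly Vieta's formulas~\eqref{eq:vieta}, with $\lambda_{N+1-k}$ being the $k$-th elementary symmetric polynomial in the $c_j$, and in particular the $\lambda_j$ are uniquely determined by the speeds.

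The main obstacle I anticipate is the rigorous justification of the localization step: showing that the global identity~\eqref{eq:vari-princ}, which holds as an equality in some negative-order Sobolev space, really does pass to the limit after translation to give the single-soliton identity. One has to control that the cross terms coming from $H_{N+1}(U)$ and $H_k(U)$ — which are nonlinear in $U$ and its derivatives — do not leave residual contributions when two bumps are far apart; this is where the hypothesis $|x_j(t)-x_k(t)|\to\infty$ (together with exponential decay of $Q_{c_j}$) is essential, and one likely needs the slightly stronger statement that $U(t)$ translated by $-x_{j_0}(t)$ converges in $H^N_{loc}$ (or even $H^{N}(\R)$ after subtracting the far bumps) strongly enough that the polynomial nonlinearities converge. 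A secondary subtlety is the case of repeated speeds $c_j = c_k$: the statement allows $c_1 \leq \cdots \leq c_N$, so if the $c_j$ are not all distinct the "$N$ roots" argument gives fewer than $N$ distinct roots and one must instead argue that the multiplicity is matched — this presumably follows because distinct bumps traveling at equal speeds still separate (the $x_j(t)-x_k(t)\to\infty$ hypothesis), so one can still extract $N$ independent constraints, but it requires a more careful bookkeeping of which translated limits one takes. Modulo these analytic points, the algebra reducing everything to Vieta is straightforward.
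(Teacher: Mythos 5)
Your proposal is correct and follows essentially the same route as the paper: pass to the limit $t\to\pm\infty$ in the variational identity to localize it onto each soliton profile $Q_{c_j}$ (using the locality of the $H_k'$ and the separation of the bumps), reduce via the single-soliton recursion $H_{k+1}'(Q_{c_j})=(-c_j)^{k}H_1'(Q_{c_j})$ to the scalar equation $(-c_j)^N+\sum_k\lambda_k(-c_j)^{k-1}=0$, and conclude by Vieta. The two analytic caveats you flag — the rigorous justification of the localization and the multiplicity bookkeeping when speeds coincide — are glossed over in exactly the same way in the paper's own (very brief) proof, so they do not distinguish your argument from theirs.
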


\begin{remark}
  The assumptions of Lemma~\ref{lem:vieta} are weaker than those of Proposition~\ref{prop:vari-princ}. In particular,  Lemma~\ref{lem:vieta} applies also to $N$-pole solutions (i.e. with multi-solitons with possibly equal speeds), whereas Proposition~\ref{prop:vari-princ} is restricted to $N$-solitons with different speeds. 
\end{remark}

\begin{proof}[Proof of Lemma~\ref{lem:vieta}]
  Letting $t\to\infty$ in~\eqref{eq:vari-princ}, using the exponential
  localization of each soliton and the asymptotic description of $U$,
for each $j=1,\dots,N$ we have
 \[
H_{N+1}'(Q_{c_j})+\sum_{k=1}^N\lambda_{k}H_k'(Q_{c_j})=0.
\]
Observe here that this argument would not be valid if the functionals
$H_k$ were containing non-local terms. In the present setting, each
$H_k'$ contains only derivatives and potentials based on powers of
$U$ and its spatial derivatives. 

  Recall that each soliton profile $Q_{c_j}$ verifies for each $k\geq
  1$ the equation
 \[
H_{k+1}'(Q_{c_j})=(-c_j)^{k}H_{1}(Q_{c_j}).
\]
As a consequence, we see that for each $j=1,\dots, N$ we have
\[
(-c_j)^{N}+\sum_{k=1}^N\lambda_{k}(-c_j)^{k-1}=0.
\]
In other words,  the speeds $-c_j$ are the roots of the
$N$-th order polynomial with coefficients $1,\lambda_N,\dots,
\lambda_1$.  The relations between the roots of a polynomial and its
coefficients are well-known to be described by Vieta's formulas as in~\eqref{eq:vieta}.
\end{proof}

We will use the following technical result in the course of the proof of Proposition~\ref{prop:vari-princ}.

\begin{lemma}
 \label{lem:olver}
For any $\phi\in H^{N+1}(\R)$
  and for any $j,k=1,\dots,N+1$, we have
\[
\scalar{H_j' (\phi)}{\partial_x H_k'(\phi)}_{L^2}=0
 \]
\end{lemma}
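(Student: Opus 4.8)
The plan is to exploit the bi-Hamiltonian structure encoded in the recursion formula~\eqref{eq:recursion}, which says $\partial_x H_{n+1}'(\phi) = \mathcal K(\phi) H_n'(\phi)$, together with the fact that $\partial_x$ is skew-adjoint on $L^2(\R)$ and that $\mathcal K(\phi)$ is a skew-adjoint operator (this is why the paper calls it "skew $K$" in~\eqref{eq:skew K}; one checks that $-\partial_x^3$ is skew, $-2\phi^2\partial_x - 2\phi_x\partial_x^{-1}(\phi\,\partial_x)$ is skew, using that $\partial_x^{-1}$ is skew-adjoint as defined). First I would record these two skew-adjointness facts carefully, since $\partial_x^{-1}$ appears and one must verify $\scalar{\partial_x^{-1}f}{g} = -\scalar{f}{\partial_x^{-1}g}$ for Schwartz (or sufficiently decaying) functions, and that all the pairings below are finite because each $H_m'(\phi)$, being a polynomial in $\phi$ and its derivatives up to order $2m-2$ or so, lies in $L^2$ when $\phi\in H^{N+1}$ and $m\le N+1$ — here the localization of the soliton-type profiles, or simply density of Schwartz functions, handles convergence of the $\partial_x^{-1}$ terms.

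The core computation is then a two-step reduction. For the pairing $\scalar{H_j'(\phi)}{\partial_x H_k'(\phi)}$, write $\partial_x H_k'(\phi) = \mathcal K(\phi) H_{k-1}'(\phi)$ (assuming $k\ge 2$; the case $k=1$ is immediate since $\partial_x H_1'(\phi) = \partial_x\phi$ and one pairs against $H_j'(\phi)$, integrating by parts — actually for $k=1$ one instead writes $\scalar{H_j'(\phi)}{\partial_x\phi}$ and uses that $H_j'(\phi) = \delta H_j$ is a gradient, so this is $\frac{d}{d\varepsilon}H_j(\phi(\cdot+\varepsilon))|_{\varepsilon=0}=0$ by translation invariance of $H_j$; the same gradient-of-a-translation-invariant-functional argument in fact proves the $k=1$ case directly and is the cleanest route). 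For general $j,k$, I would use skew-adjointness of $\mathcal K$ to move it to the other side: $\scalar{H_j'(\phi)}{\mathcal K(\phi)H_{k-1}'(\phi)} = -\scalar{\mathcal K(\phi)H_j'(\phi)}{H_{k-1}'(\phi)} = -\scalar{\partial_x H_{j+1}'(\phi)}{H_{k-1}'(\phi)} = \scalar{H_{j+1}'(\phi)}{\partial_x H_{k-1}'(\phi)}$. Thus the bilinear form $B(j,k):=\scalar{H_j'(\phi)}{\partial_x H_k'(\phi)}$ satisfies $B(j,k) = B(j+1,k-1)$. Iterating, $B(j,k)$ depends only on $j+k$, and also $B(j,k) = -B(k,j)$ directly from skew-adjointness of $\partial_x$ (integration by parts, no boundary terms by decay). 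Combining: $B(j,k) = B(j+1,k-1) = \cdots$, and pairing the "index-shift" symmetry against the antisymmetry $B(j,k)=-B(k,j)$ forces $B(j,k)=0$ — for instance $B(j,k)=B(k,j)$ (shift until the arguments are swapped, using $j+k$ fixed) while also $B(j,k)=-B(k,j)$, hence $B\equiv 0$. One must be slightly careful that the shifted indices $j+1, k-1$ stay in a range where $H_{j+1}'$ still makes sense and lies in $L^2$ against $\phi\in H^{N+1}$; since $j,k\le N+1$ we may need $\phi$ smoother, but by the density/approximation argument and continuity of all the multilinear pairings in $\phi$ this is not an obstruction — alternatively one runs the induction on $\min(j,k)$ downward to $1$ and invokes the $k=1$ (or $j=1$) base case established above.

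The main obstacle I anticipate is bookkeeping the nonlocal term in $\mathcal K(\phi)$: verifying rigorously that $\mathcal K(\phi)$ is skew-adjoint on the relevant class of functions (the $\partial_x^{-1}(\phi\,\partial_x\,\cdot)$ piece is the delicate one), and ensuring all integrations by parts have no boundary contributions and all integrals converge — this is exactly the place where the exponential localization of soliton profiles, or a Schwartz-space density argument combined with continuity of the pairings, must be invoked. Everything else is a clean algebraic induction off the recursion identity~\eqref{eq:recursion}. I would present the skew-adjointness of $\mathcal K$ as a short preliminary computation (or cite~\cite{Ol86} / \cite{HoPeZw11}), then give the two-line symmetry argument $B(j,k)=B(j+1,k-1)=-B(k,j)$ and conclude.
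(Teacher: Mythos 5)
Your proposal is correct and follows essentially the same route as the paper: apply the recursion identity~\eqref{eq:recursion}, use skew-adjointness of $\mathcal K(\phi)$ and $\partial_x$ to derive the index shift $\scalar{H_j'(\phi)}{\partial_x H_k'(\phi)}_{L^2}=\scalar{H_{j+1}'(\phi)}{\partial_x H_{k-1}'(\phi)}_{L^2}$, iterate down to $k=1$, and conclude by translation invariance of the functionals, extending to $H^{N+1}(\R)$ by density. Your alternative ending via the antisymmetry $B(j,k)=-B(k,j)$ is a harmless variation on the same computation.
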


\begin{proof}
  The result is a consequence of the iteration identity~\eqref{eq:recursion}.
  Indeed, for any $\phi\in\mathcal C^\infty_c$ we have
 \begin{multline*}
 \scalar{H_j' (\phi)}{\partial_x H_k'(\phi)}_{L^2}
      = \scalar{H_{j}' (\phi)}{\mathcal K(\phi)H_{k-1}'(\phi)}_{L^2}
      = -\scalar{\mathcal K(\phi)H_{j}' (\phi)}{H_{k-1}'(\phi)}_{L^2}\\
      =-\scalar{\partial_xH_{j+1}' (\phi)}{H_{k-1}'(\phi)}_{L^2}
      = \scalar{H_{j+1}' (\phi)}{\partial_xH_{k-1}'(\phi)}_{L^2}.
 \end{multline*}
    Iterating the process $k-1$ times, we arrive at
 \[
 \scalar{H_j' (\phi)}{\partial_x H_k'(\phi)}_{L^2}=\scalar{H_{j+k-1}' (\phi)}{\partial_xH_{1}'(\phi)}_{L^2}.
 \]
          From the invariance of $H_{j+k-1}$ under translation, we have
 \[
                0=\frac{\rmd H_{j+k-1}\phi(\cdot-y))}{\rmd y}_{|y=0}=\scalar{H_{j+k-1}' (\phi)}{\phi_x}_{L^2}=\scalar{H_{j+k-1}' (\phi)}{\partial_xH_{1}'(\phi)}_{L^2}.
 \]
             Gathering the previous identities leads to the desired conclusion, which by density is also valid in $H^{N+1}(\R)$.
\end{proof}

\begin{proof}[Proof of Proposition~\ref{prop:vari-princ}]
  From Lemma~\ref{lem:vieta}, we know that, if they exist,  $\lambda_1,\dots,\lambda_N$ in Proposition~\ref{prop:vari-princ} are uniquely determined by $c_1,\dots,c_N$ and~\eqref{eq:vieta}. We define
 \[
r(t)=S_N'(U(t)).
\]
By construction, each of the soliton profile $Q_{c_j}$ composing $U$ at the limit $t\to\pm\infty$ is a critical point of $S_N$ and is exponentially decaying, therefore we have
  \begin{multline*}
    S_N'\left(\sum_{j=1}^NQ_{c_j}(\cdot-x_j(t))\right)=\sum_{j=1}^NS_N'(Q_{c_j}(\cdot-x_j(t)))+O\left(e^{-\frac12\sqrt{c_1}|\min_{j,k}(x_j(t)-x_k(t))|}\right)
    \\=O\left(e^{-\frac12\sqrt{c_1}|\min_{j,k}(x_j(t)-x_k(t))|}\right).
  \end{multline*}
Since we have assumed that $c_j\neq c_k$ for $j\neq k$, we can infer from~\eqref{eq:9} that there exists $c_*>0$ such that
\[
  S_N'\left(\sum_{j=1}^NQ_{c_j}(\cdot-x_j(t))\right)=O\left(e^{-c_*|t|}\right).
\]
Hence we can use this result with the expression of $r$ to obtain
 \begin{multline*}
    r(t)=S_N'(U(t))- S_N'\left(\sum_{j=1}^NQ_{c_j}(\cdot-x_j(t))\right)+O\left(e^{-c_*|t|}\right).
 \\=S_N''\left( \sum_{j=1}^NQ_{c_j}(\cdot-x_j(t))\right)\left(U(t)-\sum_{j=1}^NQ_{c_j}(\cdot-x_j(t))\right)+o \left(U(t)-\sum_{j=1}^NQ_{c_j}(\cdot-x_j(t))\right)+O\left(e^{-c_*|t|}\right).
\end{multline*}
By assumption, we have
\[
\norm*{U(t)-\sum_{j=1}^NQ_{c_j}(\cdot-x_j(t))}\lesssim e^{-c_*|t|},
\]
therefore we have
\[
\norm{r(t)}_{L^2}\lesssim e^{-c_* |t|}.
\]
In particular, we have
\[
\lim_{t\to\infty}\norm{r(t)}_{L^2}=0.
\]
Our goal is to show that in fact for all $t\in\R$ we have
\[
r(t)=0.
\]
For this, it is sufficient to show that for some $t_0\in\R$ and for any $v_0\in\mathcal C_c^\infty(\R)$ we have
\[
\scalar{r(t_0)}{v_0}_{L^2}=0.
\]
We choose arbitrarily $t_0\in\mathbb R$ and $v_0\in\mathcal C_c^\infty(\R)$ and consider the evolution problem for the linearized~\eqref{eq:mkdv} equation around $U$ given by
\[
\partial_t v=\partial_x H_2''(U(t))v,\quad v(t_0)=v_0.
\]
We will show that
\[
\partial_t \scalar{r(t)}{v(t)}_{L^2}=0,
\]
and 
\[
\lim_{t\to\infty} \scalar{r(t)}{v(t)}_{L^2}=0,
\]
from which the conclusion follows.

First, we observe that
\[
\partial_t \scalar{r(t)}{v(t)}_{L^2}=\partial_t \scalar{S_N'(U(t))}{v(t)}_{L^2}=\partial_t \scalar{H'_{N+1} (U(t))}{v(t)}_{L^2}+\sum_{j=1}^N\lambda_{j}\partial_t \scalar{H'_j (U(t))}{v(t)}_{L^2}.
\]
We claim that for every $j=1,\dots,N+1$ we have
\[
 \partial_t \scalar{H_j' (U(t))}{v(t)}_{L^2}=0.
\]
Indeed, using the equations verified by $U$ and $v$ (and removing the variable $t$ for convenience) we have
\begin{equation}
 \label{eq:1}
 \partial_t \scalar{H_j' (U)}{v}_{L^2}=\scalar{H_j'' (U)\partial_xH_2'(U)}{v}_{L^2}+\scalar{H_j' (U)}{\partial_x H_2''(U)v}_{L^2}.
\end{equation}
From Lemma~\ref{lem:olver}, we have for any $\phi\in H^{N+1}(\R)$ and for any $j,k=1,\dots,N+1$ that
\[
\scalar{H_j' (\phi)}{\partial_x H_k'(\phi)}_{L^2}=0
 \]
  Writing $\phi=U+sv$ and differentiating in $s$ at $s=0$ we obtain
 \[
\scalar{H_j'' (U)v}{\partial_x H_k'(U)}+\scalar{H_j' (U)}{\partial_x H_k''(U)v}_{L^2}=0
\]
Substituting in~\eqref{eq:1} and using the self-adjointness of  $H_j'' (U) $ we obtain
\[
 \partial_t \scalar{H_j' (U)}{v}_{L^2}=\scalar{H_j'' (U)\partial_xH_2'(U)}{v}_{L^2}-\scalar{H_j'' (U)v}{\partial_x H_2'(U)}_{L^2}=0,
\]
This proves the claim, and we can infer that
\[
\partial_t \scalar{r(t)}{v(t)}_{L^2}=0.
\]
From the exponential decay of $r$, we have
\[
 \scalar{r(t)}{v(t)}_{L^2}\lesssim \norm{v(t)}_{L^2}e^{-c_*|t|}.
\]
Hence if we are able to show that $v$ grows  slower than $e^{c_*t}$, we can readily conclude that necessarily  $\scalar{r(t)}{v(t)}_{L^2}=0$.

To this aim, let us consider a partition of unity constructed in such a way that each member of the partition is (at time infinity) a localizing factor around one of the solitons composing the multi-soliton $U$. The partition that we use is similar to the one used in~\cite{CoLe11,CoMaMe11}. Let $\psi:\R\to\R$ be a $\mathcal C^\infty$ cut-off function defined such that
\[
\psi(s)=0 \text{ if }s\leq-1,\quad 0<\psi(s)<1 \text{ if }-1< s<1,\quad \psi(s)=1 \text{ if }1\leq s.
\]
Define for $j=2,\dots,N$ the middle speeds
\[
m_j=\frac{c_{j-1}+c_j}{2},
\]
Define also  for $(t,x)\in\R\times\R$ the domain walls 
\[
\psi_1(t,x)=1, \quad \psi_j(t,x)=\psi\left(\frac{1}{\sqrt{t}}(x-m_jt)\right),\quad j=2,\dots,N,
\]
and construct the partition of unity as follows:
\[
\phi_j=\psi_j-\psi_{j+1},\quad j=1,\dots,N-1,\quad \phi_N=\psi_N. 
\]
We may now write
\[
v=\sum_{j=1}^N\psi_jv.
\]
Recall (see \cite{HoPeZw11}) the following coercivity property for the linearized action around a $1$-soliton profile $Q_c$: there exists $\delta>0$ such that 
  \begin{equation}
\dual{H_2''(Q_c)w}{w}+c\dual{H_1''(Q_c)w}{w}\geq \delta\norm{w}_{H^1}^2-\frac1\delta\left(\scalar{w}{\partial_x^{-1}\Lambda_c Q_c}^2-\scalar{w}{Q}^2\right).\label{eq:10}
\end{equation}
Observe that $\partial_x^{-1}\Lambda_c Q_c$ and $Q=\partial_x^{-1}\partial_xQ$ form the generalized kernel of the operator
$(H_2''(Q_c)+H_1''(Q_c))\partial_x$ (see the original work of Weinstein \cite{We85} for the equivalent version for Schr\"odinger equations). 
We will use this property on $\psi_jv$ for $j=1,\dots,N$.

We first deal with the orthogonality directions. By direct calculations, we have
\begin{multline*}
  \partial_t\scalar{\psi_jv}{Q_{c_j}(\cdot-x_j(t))}_{L^2}\\= \scalar{\partial_t\psi_j)v}{Q_{c_j}(\cdot-x_j(t))}_{L^2}+ \scalar{\psi_j \partial_t v}{Q_{c_j}(\cdot-x_j(t))}_{L^2}+ \scalar{\psi_jv}{\partial_t x_j(t) \partial_xQ_{c_j}(\cdot-x_j(t))}_{L^2}.
\end{multline*}
  The first term of the right hand side contains a time derivative of $\psi$, hence it will be of order $t^{-\frac12}$. For the second term, we have
      \begin{multline*}
        \scalar{\psi_j \partial_t v}{Q_{c_j}(\cdot-x_j(t))}_{L^2}= \scalar{\psi_j \partial_xH_2''(U)v}{Q_{c_j}(\cdot-x_j(t))}_{L^2}
        \\= -\scalar{\partial_x\psi_j H_2''(U)v}{Q_{c_j}(\cdot-x_j(t))}_{L^2} -\scalar{\psi_j H_2''(U)v}{\partial_x Q_{c_j}(\cdot-x_j(t))}_{L^2}
        \\
        = -\scalar{\psi_j v}{H_2''(U)\partial_x Q_{c_j}(\cdot-x_j(t))}_{L^2} +O\left(t^{-\frac12}\norm{v}_{H^1}\right).
      \end{multline*}
      Moreover, by assumption on $x_j(t)$, the third term gives
        \begin{multline*}
          \scalar{\psi_jv}{\partial_t x_j(t) \partial_xQ_{c_j}(\cdot-x_j(t))}= \scalar{\psi_jv}{c_j\partial_xQ_{c_j}(\cdot-x_j(t))} +O\left(t^{-1}\norm{v}_{L^2}\right)
          \\=\scalar{\psi_jv}{c_jH_1''(U)\partial_xQ_{c_j}(\cdot-x_j(t))} +O\left(t^{-1}\norm{v}_{L^2}\right).
        \end{multline*}
By the localization properties of $\psi_j$, as $t$ is large $U$ is close to the soliton $Q_{c_j}(\cdot-x_j(t))$ on the support of $\psi_j$ and we have
\[
H_2''(U)+c_jH_1''(U)=H_2''(Q_{c_j}(\cdot-x_j(t))+c_jH_1''(Q_{c_j}(\cdot-x_j(t)))+O(e^{-c_*t}). 
\]
Since $\partial_xQ_{c_j}(\cdot-x_j(t))$ is in the kernel of the above operator, this gives
\[
  \partial_t\scalar{\psi_jv}{Q_{c_j}(\cdot-x_j(t))}=O(t^{-\frac12}\norm{v}_{H^1}).
\]
From similar arguments, we may also obtain the result for the other orthogonality direction that we have chosen:
\[
  \partial_t\scalar{\psi_jv}{\partial_x^{-1}\Lambda_{c_j}Q_{c_j}(\cdot-x_j(t))}=O(t^{-\frac12}\norm{v}_{H^1}).
\]
Let $j=1,\dots,N$. We have
    \begin{multline*}
      \partial_t \dual{H_2''(U)\psi_jv}{\psi_jv}=\dual{H_2'''(U)\partial_tU\psi_jv}{\psi_jv}+ 2\dual{H_2''(U)\psi_jv}{\partial_t(\psi_jv)}
      \\=\dual{-6U\partial_tU\psi_jv}{\psi_jv}+ 2\dual{H_2''(U)\psi_jv}{\partial_t\psi_j}+2\dual{H_2''(U)\psi_jv}{\psi_j\partial_tv}.
    \end{multline*}
 We will keep the first term of the right hand side. The second term contains a time derivative of $\psi$, hence it will be of order $t^{-\frac12}$. For the third term,  we have
   \begin{multline*}
     \dual{H_2''(U)\psi_jv}{\psi_j\partial_tv}= \dual{H_2''(U)\psi_jv}{\psi_j\partial_xH_2''(U)v}
     \\= -\dual{\partial_x(H_2''(U)\psi_jv)}{\psi_jH_2''(U)v} -\dual{H_2''(U)\psi_jv}{(\partial_x\psi_j)H_2''(U)v}.
   \end{multline*}
The second term contains a time derivative of $\psi$, hence it will be of order $t^{-\frac12}$. For the first one, we proceed further:
  \begin{multline*}
    \dual{\partial_x(H_2''(U)\psi_jv)}{\psi_jH_2''(U)v}= \dual{\partial_x(H_2''(U)\psi_jv)}{H_2''(U)\psi_jv}-\dual{\partial_x(H_2''(U)\psi_jv)}{(\partial_x\psi_j\partial_xv+\partial_x^2\psi_jv)}\\=-\dual{\partial_x(H_2''(U)\psi_jv)}{(\partial_x\psi_j\partial_xv+\partial_x^2\psi_jv)},
  \end{multline*}
and therefore this term is also of order $t^{-\frac12}$. Summarizing, we have proved that
\[
\partial_t \dual{H_2''(U)\psi_jv}{\psi_jv}=\dual{-6U\partial_tU\psi_jv}{\psi_jv}+O\left(\frac{\norm{\psi_jv}_{H^1}^2}{\sqrt{t}}\right).
\]
We may argue similarly to obtain
\[
 \partial_t \dual{H_1''(U)\psi_jv}{\psi_jv}=\dual{-6U\partial_xU\psi_jv}{\psi_jv}+O\left(\frac{\norm{\psi_jv}_{H^1}^2}{\sqrt{t}}\right).
\]
Hence, we have
\[
  \partial_t\left(\dual{H_2''(U)\psi_jv}{\psi_jv}+c_j\dual{H_1''(U)\psi_jv}{\psi_jv}\right)=\dual{-6U\left(U_t+c_j\partial_xU\right)\psi_jv}{\psi_jv}+O\left(\frac{\norm{\psi_jv}_{H^1}^2}{\sqrt{t}}\right).
\]
Recall that a $1$-soliton $U_{c}$ verifies the following transport equation
\[
\partial_tU_{c}+c\partial_xU_c=0.
 \]
 All we have left to do  is to take into account the localizing factor that we have introduced. Since $\psi_j$ is centered around $c_jt$, by assumption on $U$ we have
 \[
(\partial_tU+c_j\partial_xU)\psi_j=O\left(e^{-c_*t}\right). 
\]
Therefore, using the coercivity property \eqref{eq:10} we have for $t$ large enough
\[
  \partial_t\left(\dual{H_2''(U)\psi_jv}{\psi_jv}+c_j\dual{H_1''(U)\psi_jv}{\psi_jv}\right)\leq \frac{C}{\sqrt{t}}   \left(\dual{H_2''(U)\psi_jv}{\psi_jv}+c_j\dual{H_1''(U)\psi_jv}{\psi_jv}\right),
\]
which gives
\[
\dual{H_2''(U)\psi_jv}{\psi_jv}+c_j\dual{H_1''(U)\psi_jv}{\psi_jv}\lesssim e^{C\sqrt{t}}. 
\]
As a consequence, we have
\[
\norm{v}_{H^1}^2 \lesssim\sum_{j=1}^N \norm{\psi_jv}_{H^1}^2\lesssim e^{C\sqrt{t}},
\]
which implies
\[
\scalar{r(t)}{v(t)}_{L^2}=0. 
  \]
This concludes the proof.
\end{proof}

\section{Inertia Preservation}
\label{sec:spectral-theory}

The tools presented in this section have been developed by Lax~\cite{La75}, Lopes~\cite{Lo03} and Neves and Lopes~\cite{NeLo06}. The
work of Neves and Lopes being devoted to the case of $2$ solitons, we
extended here their results to the case of $N$ solitons with $N$ an
arbitrary integer.

\subsection{The Generalized Sylvester Law of Inertia}
\label{sec:invariance-inertia}

Let $X$ be a real Hilbert space. We first define the inertia of a
self-adjoint operator with positive essential spectrum.

\begin{definition}\label{def:inertia}
  Let $L:D(L)\subset X\to X$ be a self-adjoint operator.
  Assume that there exists $\delta>0$ such that the spectrum of $L$ in $(-\infty,\delta)$ consists into a finite number of eigenvalues with finite geometric multiplicities. 
The \emph{inertia} of $L$, denoted by $\inertia(L)$, is the pair
$(n,z)$, where $n$ is the number of negative eigenvalues of $L$ (counted with geometric multiplicities) and $z$ is
the dimension of the kernel of $L$. 
\end{definition}

We first recall the generalized Sylvester law of inertia, which is the
operator version of the eponymous law for matrices, and can be proved
using the same line of arguments.

\begin{proposition}[Generalized Sylvester Law of Inertia]
 \label{prop:sylvester}
    Let $L:D(L)\subset X\to X$ be a self-adjoint operator such that
the inertia is well-defined,
and let $M$ be a bounded invertible operator. Then we have
\[
\inertia(L)=\inertia(MLM^t),
\]
where $MLM^t$ is the self-adjoint operator with domain $(M^t)^{-1}(D(L))$.
\end{proposition}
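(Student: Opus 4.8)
The plan is to reduce the operator statement to a bounded situation and then mimic the matrix proof. Since $L$ is self-adjoint with the spectrum in $(-\infty,\delta)$ consisting of finitely many eigenvalues of finite geometric multiplicity, we can split $X$ orthogonally as $X = X_- \oplus X_0 \oplus X_+$, where $X_-$ is the (finite-dimensional) span of eigenvectors with negative eigenvalues, $X_0 = \ker L$ (finite-dimensional), and $X_+$ is the orthogonal complement, on which $L$ is bounded below by some $\delta' > 0$. Correspondingly $\inertia(L) = (n,z)$ with $n = \dim X_-$ and $z = \dim X_0$. The first step is to observe that $MLM^t$ is again self-adjoint on $(M^t)^{-1}(D(L))$ — because $M$ is bounded and invertible, $M^t$ is also bounded and invertible, so conjugation by $M$ preserves self-adjointness and the relevant spectral finiteness, hence $\inertia(MLM^t)$ is well-defined. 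The key point is that the essential spectrum of $MLM^t$ stays in $[\delta'',\infty)$ for some $\delta'' > 0$: on the subspace $(M^t)^{-1}(X_+)$, which has finite codimension, the quadratic form $\scalar{MLM^t w}{w} = \scalar{L M^t w}{M^t w} \geq \delta' \norm{M^t w}^2 \geq \delta' \norm{(M^t)^{-1}}^{-2} \norm{w}^2$ is coercive, so by the min-max characterization of eigenvalues the part of the spectrum below $\delta' \norm{(M^t)^{-1}}^{-2}$ consists of at most $\dim X_- + \dim X_0 < \infty$ eigenvalues.

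The heart of the argument is the equality of the counts. For the kernel, $MLM^t w = 0 \iff LM^t w = 0$ (since $M$ is injective) $\iff M^t w \in \ker L \iff w \in (M^t)^{-1}(\ker L)$, and since $M^t$ is a bijection this space has the same dimension $z$; hence the zero-counts agree. For the negative-eigenvalue count, I would use the variational (min–max) characterization: $n(L)$ equals the supremum over subspaces $V \subset D(L)$ on which the quadratic form $q_L(w) = \scalar{Lw}{w}$ is negative definite, of $\dim V$, and similarly $n(MLM^t)$ is the supremal dimension of subspaces $W$ on which $q_{MLM^t}(w) = \scalar{L M^t w}{M^t w} = q_L(M^t w)$ is negative definite. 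The map $W \mapsto M^t W$ is a linear isomorphism between subspaces of $X$ (carrying the domain of $MLM^t$ onto the domain of $L$), and it sends subspaces where $q_{MLM^t}$ is negative definite exactly to subspaces where $q_L$ is negative definite, preserving dimension. Hence the two suprema coincide: $n(MLM^t) = n(L)$.

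The main obstacle is purely the bookkeeping around unbounded operators: one must be careful that $M^t$ maps $D(MLM^t) = (M^t)^{-1}(D(L))$ onto $D(L)$ (which is immediate from the definition of the domain), that the min–max characterization of the finite bottom part of the spectrum is valid under only the stated hypothesis (finitely many eigenvalues of finite geometric multiplicity below $\delta$, with the rest being essential spectrum in $[\delta,\infty)$ — one uses that a negative-definite form subspace cannot meet the coercive part $(M^t)^{-1}(X_+)$ except trivially, bounding dimensions), and that ``geometric multiplicity'' is the right notion throughout (which it is, since self-adjointness on the finite-dimensional invariant subspace makes geometric and algebraic multiplicities agree). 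Once these points are checked, the argument is the verbatim operator analogue of the matrix Sylvester law, as the statement advertises; I would present the quadratic-form/min–max version since it handles the unbounded setting most cleanly and needs no spectral calculus beyond the decomposition above.
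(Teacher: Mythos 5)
Your proof is correct. The paper does not actually prove this proposition --- it only recalls it, remarking that it ``can be proved using the same line of arguments'' as the matrix Sylvester law --- and your argument (kernel preserved because $M^t$ is a bijection carrying $\ker(MLM^t)$ onto $\ker L$; negative count preserved because $W\mapsto M^tW$ is a dimension-preserving bijection between negative-definite subspaces of the two quadratic forms; well-definedness of $\inertia(MLM^t)$ via coercivity on the finite-codimension subspace $(M^t)^{-1}(X_+)$) is precisely the standard operator-theoretic transcription of that matrix argument, with the unbounded-operator bookkeeping handled correctly.
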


\subsection{Iso-inertial Families of Operators}
\label{sec:iso-inertial-family}

We will be working with linearized operators around a
multi-soliton, which fit in the following more generic framework.

Consider the abstract evolution equation
 \begin{equation}
\partial_t u=f(u),\label{eq:abstract-evolution}
\end{equation}
  for $u:\R\to X$, and recall that the following framework was set in
~\cite{La68,Lo03,NeLo06}. Let $X_2\subset X_1\subset X$ be Hilbert spaces
  and $V:X_1\to\R$ be such that the following assumptions are
  verified. 
 \begin{itemize}
 \item [(H1)] The spaces $X_2\subset X_1\subset X$ are continuously
    embedded. The embedding from $X_2$ to $X_1$ is denoted by $i$.
 \item [(H2)] The functional $V:X_1\to\R$ is $\mathcal C^3$.
 \item [(H3)] The function $f:X_2\to X_1$ is $\mathcal C^2$.
 \item [(H4)] For any $u\in X_2$, we have
 \(
V'(i(u))f(u)=0.
 \)
 \end{itemize}
Moreover, given $u\in\mathcal C^1(\R,X_1)\cap \mathcal C(\R,X_2)$ a strong solution of~\eqref{eq:abstract-evolution}, we assume  that there exists a
self-adjoint operator $L(t):D(L)\subset X\to X$ with domain $D(L)$
independent of $t$ such that for $h,k\in Z$, where $Z\subset D(L)\cap
X_2$ is a dense subspace of $X$, we have
\[
\dual{L(t)h}{k}=V''(u(t))(h,k).
 \]
  We consider the operators $B(t) :D(B)\subset X\to X$ such that
  for any $h\in Z$ we have
 \[
B(t)h=-f'(u(t))h,
 \]
and we have the following assumption.
\begin{itemize}
\item [(H5)] The closed operators $B(t)$ and $B^t(t)$ have a common
  domain $D(B)$ which is independent of $t$. The Cauchy problems
 \[
\partial_t u=B(t)u,\quad \partial_t v=B^t(t)v
\]
are well-posed in $X$ for positive and negative times.  
\end{itemize}

We then have the following result, which we recall without proofs (see the work of Lax~\cite{La75} or the work of Lopes~\cite{Lo03}).

\begin{proposition}
 \label{prop:iso-inertia}
  Let $u\in\mathcal C^1(\R,X_1)\cap \mathcal C(\R,X_2)$ be a strong solution of~\eqref{eq:abstract-evolution} and assume that
(H1)-(H5) are satisfied. Then the following assertions hold.
\begin{itemize}
\item \emph{Invariance of the set of critical points.} If there exists $t_0\in\R$ such that $V'(u(t_0))=0$, then
  $V'(u(t))=0$ for any $t\in\R$.
 \item \emph{Invariance of the inertia.} Assume that $u$ is such
    that $V'(u(t))=0$ for all $t\in\R$. Then the inertia     $\inertia(L(t))$ of the
    operator $L(t)$ representing $V''(u(t))$ is independent of $t$.
\end{itemize}
\end{proposition}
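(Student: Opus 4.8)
The plan is to deduce both assertions from the uniqueness built into (H5): the first directly, and the second after establishing a Lyapunov-type differential identity for the family $L(t)$ and then invoking the generalized Sylvester law of inertia (Proposition~\ref{prop:sylvester}); the full details of this scheme are in Lax~\cite{La75} and Lopes~\cite{Lo03}. For the invariance of the set of critical points, set $w(t):=V'(u(t))$, interpreted weakly against test vectors $h\in Z$. Differentiating the orthogonality relation (H4), namely $V'(i(u))f(u)=0$, in the direction $h$ gives, with $u=u(t)$,
\[
V''(u(t))(f(u(t)),h)=-V'(u(t))[f'(u(t))h]=\dual{w(t)}{B(t)h},
\]
since $B(t)h=-f'(u(t))h$. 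On the other hand, since $\partial_t u=f(u)$, for fixed $h\in Z$ one has $\partial_t\dual{w(t)}{h}=V''(u(t))(f(u(t)),h)$. Combining these two identities, $\partial_t\dual{w(t)}{h}=\dual{w(t)}{B(t)h}=\dual{B^t(t)w(t)}{h}$ for all $h\in Z$, so $w$ is a weak solution of $\partial_t w=B^t(t)w$. By (H5) this Cauchy problem has a unique solution on $\R$; since $w(t_0)=V'(u(t_0))=0$, we conclude $w\equiv 0$.

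Now assume $V'(u(t))=0$ for all $t$. The key step is the differential identity, valid weakly on $Z$,
\[
\partial_t L(t)=B^t(t)L(t)+L(t)B(t).
\]
To obtain it, fix $h,k\in Z$ and differentiate $\dual{L(t)h}{k}=V''(u(t))(h,k)$ in $t$; since $\partial_t u=f(u)$ this equals $V'''(u(t))(f(u(t)),h,k)$. Differentiating (H4) twice, first in the direction $h$ and then in the direction $k$, and using $V'(u(t))=0$, yields
\[
V'''(u(t))(f(u(t)),h,k)=-V''(u(t))(h,f'(u(t))k)-V''(u(t))(k,f'(u(t))h);
\]
substituting $f'(u(t))=-B(t)$ and using the self-adjointness of $L(t)$ turns the right-hand side into $\dual{B^t(t)L(t)h}{k}+\dual{L(t)B(t)h}{k}$, which is the claimed identity.

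To conclude, let $A(t)$ be the bounded invertible operator on $X$ with $A(t_0)=\mathrm{Id}$ and $\partial_t A(t)=-B(t)A(t)$; such a propagator exists and is bounded invertible by (H5) (it is, up to transposition, the propagator of the $B^t$-flow). Set $P(t):=A(t)^tL(t)A(t)$. Using the identity above together with $\partial_t A=-BA$ and $\partial_t(A^t)=-A^tB^t$, a direct computation gives
\[
\partial_t P(t)=(\partial_t A^t)LA+A^t(\partial_t L)A+A^tL(\partial_t A)=A^t(-B^tL+B^tL+LB-LB)A=0,
\]
so $P(t)\equiv P(t_0)=L(t_0)$. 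Since $A(t)$ is bounded and invertible, Proposition~\ref{prop:sylvester} yields $\inertia(L(t))=\inertia(A(t)^tL(t)A(t))=\inertia(L(t_0))$ for all $t$, which is the second assertion.

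The computations above are formal manipulations of unbounded operators, and the main obstacle is to make them rigorous: one must verify that $A(t)$ and $A(t)^t$ map the core $Z$ into the domains where all the quadratic forms and pairings are defined, that $t\mapsto\dual{P(t)h}{k}$ is genuinely $C^1$ with the derivative computed above, and that $P(t)=L(t_0)$ holds as self-adjoint operators rather than merely as forms on $Z$ — in particular that the inertia of each $L(t)$ stays well-defined in the sense of Definition~\ref{def:inertia}. Checking that the hypotheses (H1)--(H5) supply exactly the regularity and well-posedness needed to carry this out is the technical heart of the statement, and is done in~\cite{La75,Lo03}.
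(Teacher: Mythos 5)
The paper states Proposition~\ref{prop:iso-inertia} without proof, deferring to Lax~\cite{La75} and Lopes~\cite{Lo03}, and your argument is precisely the standard one from those references: the dual linearized flow $\partial_t w=B^t(t)w$ plus uniqueness from (H5) for the first assertion, and the Lax-type identity $\partial_t L=B^t L+LB$ conjugated by the propagator $A$ with $\partial_t A=-B(t)A$, $A(t_0)=\mathrm{Id}$ (which indeed exists and is bounded invertible by (H5), being the transpose of the backward propagator of the $B^t$-flow), combined with Proposition~\ref{prop:sylvester}, for the second. The computations check out, and you correctly identify that what remains is the domain and regularity bookkeeping carried out in the cited references.
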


\subsection{Iso-inertia at Infinity}
\label{sec:iso-inertia}

Given an $t$-dependent family of operators whose inertia we are interested in,
Proposition~\ref{prop:iso-inertia} allows to choose for a specific $t$
to perform the inertia calculation. This is however in most situations
not sufficient, as we would like to let $t$ go to infinity and relate
the inertia of our family with the inertia of the asymptotic objects
that we obtain. This is what is allowed in the following framework.

Let $X$ be a real Hilbert space. Let $N\in\mathbb N$ and $(\tau^j_n)$ be sequences of
isometries of $X$ for $j=1,\dots,N$. For brevity in notation, we
denote the composition of an isometry $\tau_n^k$ and the inverse of
$\tau_n^j$ by
\[
\tau_n^{k/j}=\tau_n^k(\tau_n^j)^{-1}.
\]
Let $A$, $(B^j)_{j=1,\dots,N}$ be linear operators
and $(R_n)$ be a sequence of linear operators. Define the sequences of
operators based on $(B^j)$ and $(\tau^j_n)$ by
\[
B^j_n=(\tau^j_n)^{-1}B^j\tau^j_n.
\]
We will use the following notations: 
The  resolvent set of an operator $L$ will be denoted by $\rho(L)$.
We denote by $P_{\lambda,\eps}(L)$ the
          spectral projection of $L$ corresponding to the circle of
          center $\lambda\in\mathbb C$ and
          radius $\eps>0$. The range will be denoted by $\operatorname{Range}$
          and the dimension by $\dim$. 

          We make the following assumptions.
\begin{itemize}
\item [(A1)] For all $j=1,\dots,N$ and $n\in\mathbb N$, the operators
  $A$, $A+B^j$, $A+B^j_n$, $A+\sum_{j=1}^N B^j_n+R_n$ are self-adjoint
  with the same domain $D(A)$.
 \item [(A2)] The operator $A$ is invertible.   For all $j=1,\dots,N$
    and $n\in\mathbb N$, the operator $A$ commutes with $\tau^j_n$
    (i.e. $A=(\tau^j_n)^{-1}A\tau^j_n$).
 \item [(A3)]
    There exists $\delta>0$ such that for all $j=1,\dots,N$
      and $n\in\mathbb N$, the spectra of
      $A$, $A+B^j$, $A+B^j_n$, $A+\sum_{j=1}^N B^j_n+R_n$
 in $(-\infty,\delta)$ consist into a finite number of eigenvalues with finite geometric multiplicities. 
 \item  [(A4)]  For every $\lambda \in \cap_{j=1}^N\rho(A+B^j)$ and for all $j=1,\dots,N$ the operators
    $A(A+B^j-\lambda I)^{-1}$  are bounded.
 \item  [(A5)]  In the operator norm, $\norm{R_nA^{-1}}\to0$ as $n\to\infty$.
 \item  [(A6)] For all $u\in D(A)$ and for all $j,l=1,\dots,N$, $j\neq l$ we have
 \[
 \lim_{n\to\infty}\norm{\tau_n^{j/l}B^l \tau_n^{l/j} u}_X\to 0.
 \]
 \item  [(A7)] For all $u\in X$ and for all $j,k=1,\dots, N$, $j\neq k$, we have
        $\tau_n^{j/k} u\rightharpoonup 0$ weakly in $X$ as $n\to \infty$.
 \item  [(A8)]  For all $j=1,\dots,N$, the operator
          $B^jA^{-1}$ is compact.
 \end{itemize}
        Define the operator $L_n:D(A)\subset X\to X$ by
\[
L_n=A+\sum_{j=1}^NB^j_n+R_n.
 \]
We have the following result on the asymptotic behavior of the spectrum of $L_n$ as $n$ goes to infinity. 
 \begin{theorem}\label{thm:asymptotic-inertia}
          Assume that assumptions (A1)-(A8) hold and let
          $\lambda<\delta$. The following assertions hold.
 \begin{itemize}
 \item If $\lambda \in\cap_{j=1}^N\rho(A+B^j)$, then there
            exists $n_\lambda\in\mathbb N$ such that for all $n\geq n_\lambda$ we have
            $\lambda\in\rho(L_n)$.
 \item If $\lambda\in\cup_{j=1}^N\sigma(A+B^j)$, then there
              exists $\eps_0>0$ such that for all $0<\eps<\eps_0$
              there exists $n_\eps\in\mathbb N$ such that for all
              $n\geq n_\eps$
              we
              have
\[
\dim\left(\operatorname{Range}\left(P_{\lambda,\eps} \left(L_n\right) \right)\right)=\sum_{j=1}^N \dim\left(\operatorname{Range}\left( P_{\lambda,\eps} \left(A+B^j\right) \right)\right).
\]
 \end{itemize}
 \end{theorem}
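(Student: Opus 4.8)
The plan is to follow, for an arbitrary number $N$ of channels, the scheme introduced by Neves and Lopes~\cite{NeLo06} for two solitons: recast the statement as a comparison, near the point $\lambda$, between the spectrum of $L_n$ and the spectra of the limiting operators $A+B^j$, and organise the argument around the finite rank spectral projections
\[
P_{\lambda,\eps}(L_n)=\frac{1}{2\pi i}\oint_{\gamma}(z-L_n)^{-1}\rmd z,\qquad \gamma=\{\,z:|z-\lambda|=\eps\,\}.
\]
First I would fix $\eps_0>0$ small enough that, for every $j$, the only point of $\sigma(A+B^j)$ in the closed disc of radius $\eps_0$ about $\lambda$ is $\lambda$ itself; this is possible by (A3) since $\lambda<\delta$. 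Then for $0<\eps<\eps_0$ one has $\operatorname{Range}P_{\lambda,\eps}(A+B^j)=\ker(A+B^j-\lambda)$ and $\gamma\setminus\{\lambda\}\subset\cap_j\rho(A+B^j)$. I would also record, from (A2), the conjugation identities $A=(\tau^j_n)^{-1}A\tau^j_n$ and $(z-(A+B^j_n))^{-1}=(\tau^j_n)^{-1}(z-(A+B^j))^{-1}\tau^j_n$ --- so that the resolvents $(z-(A+B^j_n))^{-1}$ are bounded uniformly in $n$ on $\gamma$ --- and, from (A1) and (A8), that every $B^j$ is $A$-bounded and that $B^jA^{-1}$ and $A^{-1}B^j$ are compact. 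With these in place the first assertion is just the case $m:=\sum_j\dim\ker(A+B^j-\lambda)=0$ of the dimension formula.

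For the first assertion I would argue by contradiction. If $\lambda\in\cap_j\rho(A+B^j)$ but $\lambda\in\sigma(L_n)$ along a subsequence, then (A3) and self-adjointness make $\lambda$ an eigenvalue, so there are $u_n\in D(A)$ with $(L_n-\lambda)u_n=0$ and $\|u_n\|_X=1$; one first checks (this is where (A4) and the relative boundedness in (A1) enter) that $(u_n)$ is bounded in the graph norm of $A$, and, extracting a subsequence, that $\tau^j_nu_n\rightharpoonup\phi^j$ weakly in $X$ for each $j$. Conjugating $(L_n-\lambda)u_n=0$ by $\tau^j_n$ and using $\tau^j_nB^l_n=\tau_n^{j/l}B^l\tau_n^{l/j}\tau^j_n$ together with $\tau_n^{l/j}\tau^j_nu_n=\tau^l_nu_n$, one gets
\[
(A+B^j-\lambda)(\tau^j_nu_n)=-\sum_{l\neq j}\tau_n^{j/l}B^l(\tau^l_nu_n)-\tau^j_nR_nu_n.
\]
In the right-hand side, each term $\tau_n^{j/l}B^l(\tau^l_nu_n)$ is rewritten as $\tau_n^{j/l}\bigl(B^lA^{-1}\bigr)\tau^l_n(Au_n)$ by moving $A$ through the isometries via (A2); since $(\tau^l_nu_n)$ is also bounded in the graph norm, $\tau^l_n(Au_n)=A(\tau^l_nu_n)\rightharpoonup A\phi^l$ weakly, the compactness of $B^lA^{-1}$ turns this into the strong limit $B^l\phi^l$, and $\tau_n^{j/l}(B^l\phi^l)\rightharpoonup 0$ by (A7) because $l\neq j$; the term $\tau^j_nR_nu_n$ is $O(\|R_nA^{-1}\|)\to 0$ by (A5). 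Hence the right-hand side converges weakly to $0$, so by weak closedness of the graph of $A+B^j-\lambda$ we get $(A+B^j-\lambda)\phi^j=0$, i.e.\ $\phi^j\in\ker(A+B^j-\lambda)=\{0\}$. Feeding $\phi^l=0$ back in, the same terms now converge \emph{strongly} to $0$, hence $\tau^j_nu_n=(A+B^j-\lambda)^{-1}(o(1))\to 0$ strongly, so $u_n\to 0$ in $X$, contradicting $\|u_n\|=1$. (The identification $\phi^j\in\ker(A+B^j-\lambda)$ is the part that survives into the general case.)

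For the second assertion, set $m_j=\dim\ker(A+B^j-\lambda)$ and $m=\sum_jm_j$. For the inequality $\dim\operatorname{Range}P_{\lambda,\eps}(L_n)\geq m$, I would pick orthonormal bases $(e^j_i)_{i=1}^{m_j}$ of each $\ker(A+B^j-\lambda)$ and set $\tilde e^j_{i,n}=(\tau^j_n)^{-1}e^j_i$. Then $L_n\tilde e^j_{i,n}=\lambda\tilde e^j_{i,n}+o(1)$ --- the self term is exact because $(A+B^j)e^j_i=\lambda e^j_i$, the cross terms $B^l_n\tilde e^j_{i,n}=(\tau^l_n)^{-1}(B^lA^{-1})\tau_n^{l/j}(Ae^j_i)$ with $l\neq j$ tend to $0$ strongly by (A7) and (A8) (here $Ae^j_i$ is a fixed vector), and $R_n\tilde e^j_{i,n}=O(\|R_nA^{-1}\|)$ --- while the $\tilde e^j_{i,n}$, across all $(j,i)$, are asymptotically orthonormal by (A7). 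Consequently $P_{\lambda,\eps}(L_n)$ is injective on $V_n:=\operatorname{span}\{\tilde e^j_{i,n}\}$ for large $n$: if $P_{\lambda,\eps}(L_n)v_n=0$ for a unit $v_n\in V_n$, then $v_n$ lies in the spectral subspace of $L_n$ for $\sigma(L_n)\setminus(\lambda-\eps,\lambda+\eps)$, forcing $\|(L_n-\lambda)v_n\|\geq\eps$, in contradiction with $\|(L_n-\lambda)v_n\|=o(1)$; hence $\dim\operatorname{Range}P_{\lambda,\eps}(L_n)\geq\dim V_n=m$. For the reverse inequality I would suppose $\dim\operatorname{Range}P_{\lambda,\eps}(L_n)\geq m+1$ along a subsequence and choose a unit $u_n\in\operatorname{Range}P_{\lambda,\eps}(L_n)$ orthogonal to $P_{\lambda,\eps}(L_n)V_n$. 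Running the weak-limit argument of the first part --- now one works with $(z-L_n)^{-1}u_n$, $z\in\gamma$, rather than with an exact eigenrelation, since $(L_n-\lambda)u_n$ is only $O(\eps)$ --- yields $\tau^j_nu_n\rightharpoonup\phi^j$ with $\phi^j\in\operatorname{Range}P_{\lambda,\eps}(A+B^j)=\ker(A+B^j-\lambda)$; but the orthogonality $u_n\perp\tilde e^j_{i,n}+o(1)$ forces $\phi^j\perp e^j_i$ for every $i$, so $\phi^j=0$ for all $j$, and then the strong-convergence conclusion $u_n\to 0$ contradicts $\|u_n\|=1$.

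The main obstacle is the package of decoupling estimates hidden behind the phrase ``the right-hand side converges to $0$'': one must show that the cross interactions $\tau_n^{j/l}B^l\tau_n^{l/j}$ between distinct channels, together with the remainder $R_n$, are negligible along the \emph{varying} sequences of (approximate) eigenvectors, and this is precisely where (A5)--(A8) are used in concert --- (A7) for weak convergence to $0$, (A8) to upgrade it to norm convergence after composing with $B^lA^{-1}$, (A6) for the terms in which no fixed vector can be factored out, and the relative boundedness from (A1) (with (A4)) to keep the graph norms under uniform control. Compared with~\cite{NeLo06}, where $N=2$, the only genuinely new ingredient is the bookkeeping of the $N$ mutually decoupling channels and the verification that all $m$ translated basis vectors are simultaneously asymptotically orthonormal; conceptually no new idea is needed.
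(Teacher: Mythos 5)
Your proposal takes a genuinely different route from the paper's. The paper never touches sequences of eigenvectors: it substitutes the identity $u=(A+B^k_n-\lambda I)^{-1}\bigl(-\sum_{j\neq k}B^j_nu-R_nu\bigr)$ into itself once, producing an operator $W_n^k(\lambda)$ whose \emph{operator norm} tends to $0$ by (A4)--(A6) and (A8); the first assertion is then immediate ($u=W^k_n(\lambda)u$ forces $u=0$), and the second follows by feeding the resulting formula for $(L_n-\mu I)^{-1}$ into the contour integral and computing residues, so that $P_{\lambda,\eps}(L_n)$ converges in norm to $\sum_j(\tau^j_n)^{-1}P^j\tau^j_n$. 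You instead run a concentration--compactness argument on normalized (approximate) eigenvectors $u_n$, extract weak limits $\tau^j_nu_n\rightharpoonup\phi^j$, identify $\phi^j\in\ker(A+B^j-\lambda)$, and upgrade to strong convergence. Your lower bound $\dim\operatorname{Range}\left(P_{\lambda,\eps}(L_n)\right)\geq\sum_j m_j$ via the translated kernel bases is sound as written: there the vectors $e^j_i$ are fixed elements of $D(A)$, so (A5) and (A6) apply directly.

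The gap is the claim that the sequences $(u_n)$ are bounded in the graph norm of $A$ uniformly in $n$, which you attribute to ``(A4) and the relative boundedness in (A1)''. (A1) asserts only that the operators share the domain $D(A)$; by the closed graph theorem this yields $\norm{Au}\leq C_n(\norm{L_nu}+\norm{u})$ with a constant that a priori depends on $n$. The natural attempt to make it uniform,
$\norm{Au_n}\leq\norm{A(A+B^k_n-\lambda I)^{-1}}\,\norm{\sum_{j\neq k}B^j_nu_n+R_nu_n}\leq C\bigl(\sum_{j}\norm{B^jA^{-1}}+\norm{R_nA^{-1}}\bigr)\norm{Au_n}$,
closes only if $\sum_j\norm{B^jA^{-1}}<1$, which is not among the hypotheses: (A8) gives compactness, hence $A$-boundedness, of each $B^j$, but with no control on the size of the relative bound. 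Without this uniform bound you cannot extract a weak limit of $A\tau^l_nu_n$, cannot apply the compactness of $B^lA^{-1}$ to it, and cannot estimate $\norm{R_nu_n}\leq\norm{R_nA^{-1}}\norm{Au_n}$ --- so both the strong-convergence step in the first assertion and the entire upper-bound argument in the second collapse. (The weak identification $\phi^j\in\ker(A+B^j-\lambda)$ alone can be rescued by testing against $\psi\in D(A)$ and throwing every operator onto $\psi$, using the symmetry of $B^l$ and $R_n$ together with (A5)--(A6); but that alone does not produce the contradiction with $\norm{u_n}=1$.) The paper's $W^k_n$ computation is precisely the device that keeps all estimates at the level of operator norms of compositions such as $B^l_n(A+B^j_n-\lambda I)^{-1}B^j_n(A+B^k_n-\lambda I)^{-1}$, so that no bound on the eigenvectors themselves is ever needed; to fill your gap you would essentially have to reproduce that computation, at which point the sequential argument becomes redundant.
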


        In our setting, we are interested in particular in the inertia of $L_n$ and we will make use of the following corollary. 
 \begin{corollary}
\label{cor:asymptotic-inertia}
          Under the assumptions of Theorem
~\ref{thm:asymptotic-inertia}, if there exists $n_L$  such
          that for all $n\geq n_L$ we have
 \[
 \dim(\ker(L_n))\geq\sum_{j=1}^N\dim(\ker(A+B^j)),
 \]
          then for all $n\geq n_L$ we have
 \[
\inertia(L_n)=\sum_{j=1}^N\inertia(A+B^j).
 \]
         Moreover, a non-zero eigenvalue of $L_n$ cannot approach $0$
         as $n\to \infty$.  
            
 \end{corollary}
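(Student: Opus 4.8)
\textbf{Proof proposal for Corollary~\ref{cor:asymptotic-inertia}.}

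The plan is to deduce the corollary directly from Theorem~\ref{thm:asymptotic-inertia} by applying it at the two relevant values $\lambda=0$ and at each $\lambda\in\cup_{j=1}^N\sigma(A+B^j)$ with $\lambda<\delta$, and then using the fact that the inertia only sees the spectrum below $\delta$. First I would observe that, since each $A+B^j$ has (by (A3)) only finitely many eigenvalues below $\delta$, the set $\Sigma:=\{\lambda<\delta:\lambda\in\cup_{j=1}^N\sigma(A+B^j)\}$ is finite; enumerate it as $\lambda_1<\dots<\lambda_m$ (with $\lambda_i=0$ for exactly one index $i$ if $0$ is in the set, which it is under the standing hypothesis that $\sum_j\dim\ker(A+B^j)$ can be nonzero, and otherwise the kernel contribution is empty and the argument simplifies). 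Choose $\eps_0>0$ small enough that the closed discs $\overline{D(\lambda_i,\eps_0)}$ are pairwise disjoint and all contained in $\{\operatorname{Re}<\delta\}$, and so that no $\lambda_i\pm\eps_0$ lies in any $\sigma(A+B^j)$. For $\lambda\notin\Sigma$ with $\lambda<\delta$, the first bullet of Theorem~\ref{thm:asymptotic-inertia} gives $\lambda\in\rho(L_n)$ for $n$ large; in particular, applying this to a fixed real $\mu<\min\sigma(\cup_j(A+B^j))$ shows that $L_n$ has no spectrum below $\mu$ for large $n$, so the negative eigenvalues of $L_n$ (for $n$ large) all lie in $\cup_i D(\lambda_i,\eps_0)$ together with the eigenvalue clusters near the positive $\lambda_i$'s.

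Next I would count. For each $i$, the second bullet of Theorem~\ref{thm:asymptotic-inertia} yields, for all $n$ large,
\[
\dim\operatorname{Range}\bigl(P_{\lambda_i,\eps}(L_n)\bigr)=\sum_{j=1}^N\dim\operatorname{Range}\bigl(P_{\lambda_i,\eps}(A+B^j)\bigr),
\]
and since $\eps<\eps_0$ isolates $\lambda_i$ in each $\sigma(A+B^j)$, the right-hand side equals $\sum_{j=1}^N\dim\ker(A+B^j-\lambda_i I)$, i.e. the total multiplicity of $\lambda_i$ as an eigenvalue of $\bigoplus_j(A+B^j)$. Summing over the $\lambda_i<0$ gives that the total algebraic multiplicity of the spectrum of $L_n$ in $(-\infty,0)$ equals $\sum_j n_j$, where $(n_j,z_j)=\inertia(A+B^j)$; since $L_n$ is self-adjoint, geometric and algebraic multiplicities of real eigenvalues coincide, so the number of negative eigenvalues of $L_n$ (with multiplicity) is exactly $\sum_j n_j$. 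For the kernel, the case $\lambda_i=0$ of the displayed identity gives $\dim\operatorname{Range}(P_{0,\eps}(L_n))=\sum_j z_j$; this is the total multiplicity of eigenvalues of $L_n$ in the disc $D(0,\eps)$, which by self-adjointness is the number of \emph{real} eigenvalues of $L_n$ in $(-\eps,\eps)$ counted with multiplicity. A priori this cluster could split into a genuine kernel of dimension $\leq\sum_j z_j$ plus some small nonzero eigenvalues; this is precisely where the hypothesis $\dim\ker(L_n)\geq\sum_j z_j$ enters. Combined with $\dim\ker(L_n)\leq\dim\operatorname{Range}(P_{0,\eps}(L_n))=\sum_j z_j$, it forces $\dim\ker(L_n)=\sum_j z_j$ and, moreover, forces the entire cluster near $0$ to consist of the kernel, so that $L_n$ has no nonzero eigenvalue in $(-\eps,\eps)$ for $n$ large. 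This yields $\inertia(L_n)=(\sum_j n_j,\sum_j z_j)=\sum_j\inertia(A+B^j)$ and also the final assertion that a nonzero eigenvalue of $L_n$ cannot approach $0$: any sequence of nonzero eigenvalues $\mu_n\to0$ would eventually enter $(-\eps,\eps)\setminus\{0\}$, contradicting what we just proved.

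The main obstacle — and really the only nontrivial point beyond bookkeeping — is the argument just described around $\lambda=0$: without the multiplicity lower bound on $\ker(L_n)$, Theorem~\ref{thm:asymptotic-inertia} only controls the \emph{total} multiplicity of eigenvalues accumulating at $0$, not how that total is partitioned between the kernel and small nonzero eigenvalues, and one genuinely needs the hypothesis to rule out the eigenvalues-escaping-to-zero scenario. Everything else is the standard spectral-projection/Riesz-integral bookkeeping, using self-adjointness of $L_n$ (so spectrum is real and algebraic multiplicity equals geometric multiplicity) together with the disjointness of the discs $D(\lambda_i,\eps_0)$ to ensure the projections $P_{\lambda_i,\eps}(L_n)$ have mutually orthogonal, complementary ranges within the spectral subspace of $L_n$ below $\delta$.
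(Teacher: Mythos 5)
The paper never writes out a proof of Corollary~\ref{cor:asymptotic-inertia}: it is presented as a direct consequence of Theorem~\ref{thm:asymptotic-inertia}, with a pointer to Neves--Lopes for $N=2$, so there is no in-paper argument to compare against line by line. Your derivation is the natural and intended one and is correct in substance: the first bullet confines the spectrum of $L_n$ below $\delta$ to small discs around the finitely many eigenvalues $\lambda_i$ of the operators $A+B^j$; the second bullet identifies the total multiplicity of $L_n$ in each disc with $\sum_j\dim\ker(A+B^j-\lambda_i I)$; and --- this is the key point, which you isolate exactly right --- the hypothesis $\dim\ker(L_n)\geq\sum_j\dim\ker(A+B^j)$ is what converts the upper bound $\dim\ker(L_n)\leq\dim\operatorname{Range}(P_{0,\eps}(L_n))=\sum_j z_j$ into an equality and forces the whole cluster at $0$ to be kernel, which gives both the inertia identity and the non-accumulation of nonzero eigenvalues at $0$.

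One step is asserted more strongly than what the first bullet literally provides: from ``$\lambda\in\rho(L_n)$ for $n\geq n_\lambda$'' at each fixed $\lambda$ you conclude that $L_n$ has no spectrum anywhere in $(-\infty,0]\setminus\cup_i D(\lambda_i,\eps)$ for $n$ large. Pointwise resolvent membership does not by itself forbid eigenvalues of $L_n$ from drifting through that region; one needs the smallness of $\norm{W_n^k(\lambda)}$ to be locally uniform in $\lambda$ on compact subsets of $\cap_j\rho(A+B^j)$ (which the proof of the theorem does furnish, and which the paper itself already uses implicitly when invoking the first bullet uniformly on the circle $\Gamma$), plus an $n$-independent lower bound on $L_n$ to exclude eigenvalues escaping to $-\infty$ (available from $\norm{B^j(A-\lambda I)^{-1}}\to0$ as $\lambda\to-\infty$ together with (A5)). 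This is routine and at the same level of rigor as the paper, but deserves a sentence. Finally, note that you prove the conclusion for $n$ sufficiently large rather than for all $n\geq n_L$ as literally stated; in the application this is immaterial since the inertia of $L_n$ is $n$-independent by Proposition~\ref{prop:iso-inertia}.
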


Theorem~\ref{thm:asymptotic-inertia} and Corollary~\ref{cor:asymptotic-inertia} were proved in~\cite{NeLo06} in the
case $N=2$. We adapt here the proof of~\cite{NeLo06} to handle the
case of generic $N\in\mathbb N$. 

\begin{proof}[Proof of Theorem~\ref{thm:asymptotic-inertia}]
  We start by the first assertion. Let $\lambda<\delta$ be such that
  $\lambda \in\cap_{j=1}^N\rho(A+B^j)$. 
By assumption (A3)
  $\lambda$ can either be in the resolvent of $L_n$ or be an eigenvalue with finite multiplicity. Hence, to prove that $\lambda
 \in\rho(L_n)$, it is sufficient to prove that $u=0$ is the only
  solution to
 \[
(L_n-\lambda I)u=0.
\]
Assume therefore that there exists $u\in D(A)$ such that

\begin{equation}
(L_n-\lambda I)u=\left(A+\sum_{j=1}^NB^j_n+R_n-\lambda
  I\right)u=0.\label{eq:3}
\end{equation}
We remark here that since
 \[
(\tau^j_n)^{-1} \left(A+B^j-\lambda I\right)\tau^j_n=A+B^j_n-\lambda I
\]
we have 
 \[
\rho(A+B^j)=\rho(A+B_n^j).
 \]
Since   $\lambda \in\cap_{j=1}^N\rho(A+B^j)$ we may rewrite~\eqref{eq:3} for any $k=1,\dots,N$ as
\[
u=(A+B^k_n-\lambda I)^{-1}\left(-\sum_{\substack{j=1\\j\neq k}}^NB^j_nu-R_nu\right).
\]
We now use this equation recursively and replace the $u$ after
$B^j_n$ by its expression in the right member (with $k$
replaced by $j$) to obtain
\[
u=(A+B^k_n-\lambda I)^{-1}\left(-\sum_{\substack{j=1\\j\neq k}}^NB^j_n (A+B^j_n-\lambda I)^{-1}\left(-\sum_{\substack{l=1\\l\neq j}}^NB^l_nu-R_nu\right)-R_nu\right).
\]
We develop the right member of the previous equation to define the
operator $W_n^k(\lambda):D(A)\to X$ by
 \begin{multline}\label{eq:Wkn}
    W_n^k(\lambda)=(A+B^k_n-\lambda I)^{-1}\left(\sum_{\substack{j=1\\j\neq k}}^NB^j_n (A+B^j_n-\lambda I)^{-1}\left(\sum_{\substack{l=1\\l\neq j}}^NB^l_n+R_n\right)\right)
    -(A+B^k_n-\lambda I)^{-1} R_n.
 \end{multline}
Then $u\in D(A)$ is  a fixed point of $W_n^k(\lambda)$. We aim at
showing that  the operator $W_n^k$ can in fact be extended to a
bounded operator which verifies
$\norm{W_n^k(\lambda)}<1$ for $n$ large. This will imply that $u=0$. We first consider the operator 
\[
(A+B^k_n-\lambda I)^{-1}\sum_{\substack{j=1\\j\neq k}}^NB^j_n (A+B^j_n-\lambda I)^{-1}\sum_{\substack{l=1\\l\neq j}}^NB^l_n.
\]
Since an operator and its adjoint share the same norm and all the
operators that we are manipulating are symmetric by assumption, we have for any $j,k,l=1,\dots,N$, $k\neq j$, $j\neq l$ that
 \begin{equation*}
 \norm{(A+B^k_n-\lambda I)^{-1}B^j_n (A+B^j_n-\lambda I)^{-1}B^l_n}=
 \norm{B^l_n (A+B^j_n-\lambda I)^{-1} B^j_n (A+B^k_n-\lambda I)^{-1}}
\end{equation*}
Since the $\tau^j_n$ are isometries, we have
 \begin{multline*}
 \norm*{  B^l_n (A+B^j_n-\lambda I)^{-1} B^j_n (A+B^k_n-\lambda I)^{-1}}\\
    =
\norm*{(\tau^l_n)^{-1}B^l \tau_n^{l/j}(A+B^j-\lambda I)^{-1}\tau^j_n
  (\tau^j_n)^{-1}B^j\tau_n^{j/k}(A+B^k-\lambda I)^{-1}\tau^k_n}
\\
=
\norm*{  (\tau^j_n)^{-1}\left(\tau_n^{j/l}B^l \tau_n^{l/j}(A+B^j-\lambda I)^{-1}B^j\tau_n^{j/k}(A+B^k-\lambda I)^{-1}\tau_n^{k/j}\right) \tau^j_n}
\\
=
\norm*{ \tau_n^{j/l}B^l \tau_n^{l/j}(A+B^j-\lambda I)^{-1}B^jA^{-1}\tau_n^{j/k}A(A+B^k-\lambda I)^{-1}\tau_n^{k/j}}.
\end{multline*}
Now, by assumption (A4), the family
\[
\tau_n^{j/k}A(A+B^k-\lambda I)^{-1}\tau_n^{k/j}
 \]
  is uniformly bounded.  By assumption (A8), the operator
 \[
B^jA^{-1}
 \]
    is compact. The operator
 \[
      (A+B^j-\lambda I)^{-1}
 \]
    is bounded. And finally, combining all these informations with assumption (A6), we have
 \[
\lim_{n\to\infty}\norm*{ \tau_n^{j/l}B^l \tau_n^{l/j}(A+B^j-\lambda I)^{-1}B^jA^{-1}\tau_n^{j/k}A(A+B^k-\lambda I)^{-1}\tau_n^{k/j}}=0.
 \]
      The terms involving $R_n$ in $W^k_n(\lambda)$ are taken care of
      by assumptions (A4) and (A5): as $n\to\infty$, we have
\[
\norm{(A+B_n^k-\lambda I)^{-1}R_n}=\norm{R_n(A+B_n^k-\lambda I)^{-1}}=\norm{R_nA^{-1}A(A+B_n^k-\lambda I)^{-1}}\to0.
\]
In conclusion, we indeed have
\[
\lim_{n\to\infty}\norm{W^k_n(\lambda)}=0,
\]
which implies that for $n$ large enough $u=0$ is the only solution of~\eqref{eq:3} and that $\lambda \in\rho(L_n)$. This concludes the proof of the first part of Theorem~\ref{thm:asymptotic-inertia}.

We now prove the second part of Theorem~\ref{thm:asymptotic-inertia}. Let $\lambda<\delta$ be such that $\lambda\in \cup_{j=1}^N\sigma(A+B^j)$. By isolatedness of the eigenvalues below $\delta$, there exists $\eps_0>0$ such that for all $\mu\in \mathbb C$ verifying $|\lambda-\mu|\leq \eps_0$, $\mu\neq \lambda$ we have $\mu\in \cap_{j=1}^N\rho(A+B^j)$. Take now $0<\eps<\eps_0$. By the first part, there exists $n_\eps$ such that for all $\mu\in \mathbb C$ verifying $|\lambda-\mu|= \eps$, $\mu\neq \lambda$, we have $\mu\in \rho(L_n)$. We denote by $\Gamma\subset \mathbb C$ the circle centered at $\lambda$ with radius $\eps$. The corresponding spectral projection is then given by
\begin{equation*}
P_{\lambda,\eps}(L_n)=\frac{1}{2\pi i}\int_{\Gamma}(L_n-\mu
I)^{-1}d\mu.
\end{equation*}
    We use a strategy similar to the one of the first part to express the resolvent $(L_n-\mu I)^{-1}$. Assume that $u\in D(A)$ and $f\in X$ are such that
 \[
(L_n-\mu I)^{-1}f=u.
\]
It is equivalent to 
\[
(L_n-\mu I)u=Au+\sum_{j=1}^NB^j_nu+R_nu-\mu u=f.
\]
Since for all $k=1,\dots,N$ we have $\mu\in\rho(A+B^k)$, we may write
\[
u=(A+B^k_n-\mu I)^{-1}\left(f-\sum_{\substack{j=1\\j\neq k}}^NB^j_nu-R_nu\right).
\]
As in the first part, we use the equation recursively to replace the $u$ after $B^j_n$ to get
\[
u=(A+B^k_n-\mu I)^{-1}\left(f-\sum_{\substack{j=1\\j\neq k}}^NB^j_n (A+B^j_n-\mu I)^{-1}\left(f-\sum_{\substack{l=1\\l\neq j}}^NB^l_nu-R_nu\right)-R_nu\right).
\]
Using the operator $W^k_n$ already defined in the first part (see~\eqref{eq:Wkn}), we write
\[
u=W^k_n(\mu)u+(A+B^k_n-\mu I)^{-1}f+(A+B^k_n-\mu I)^{-1}\sum_{\substack{j=1\\j\neq k}}^NB^j_n (A+B^j_n-\mu I)^{-1}f.
 \]
  We already proved in the first part that $\lim_{n\to \infty}\norm{W^k_n(\mu)}=0$, therefore if $n$ is large enough we may write $u$ as the image of
  $f$ by the following operator, therefore giving a new expression for
  the resolvent:
 \begin{equation*}
    (I-W^k_n(\mu))^{-1}\left((A+B^k_n-\mu I)^{-1}+(A+B^k_n-\mu
      I)^{-1}\sum_{\substack{j=1\\j\neq k}}^NB^j_n (A+B^j_n-\mu
      I)^{-1}\right)
    =(L_n-\mu I)^{-1}.
 \end{equation*}
Let us define an approximate projection by 
\[
P_n=\frac{1}{2\pi i}\int_{\Gamma}(A+B^k_n-\mu I)^{-1}d\mu
+\frac{1}{2\pi i}\int_{\Gamma} (A+B^k_n-\mu
I)^{-1}\sum_{\substack{j=1\\j\neq k}}^NB^j_n (A+B^j_n-\mu I)^{-1}d\mu.
\]
Since  $\lim_{n\to \infty}\norm{W^k_n(\mu)}=0$, we have
\[
\lim_{n\to\infty}\norm{P_{\lambda,\eps}(L_n)-P_n}=0.
\]
As $P_{\lambda,\eps}(L_n)$ has finite dimensional range, this implies
that for $n$ large enough we have
\begin{equation*}
\dim\left(\operatorname{Range}\left(P_{\lambda,\eps} \left(L_n\right) \right)\right)=\dim\left(\operatorname{Range}\left( P_n \right)\right).
\end{equation*}
We now analyze $P_n$. The first term in the expression of $P_n$ is
just
\[
\frac{1}{2\pi i}\int_{\Gamma}(A+B^k_n-\mu I)^{-1}d\mu=P_{\lambda,\eps}(A+B^k_n).
\]
Moreover, we have
 \begin{equation*}
 \dim\left(\operatorname{Range}
      (P_{\lambda,\eps}(A+B^k_n))\right)
    =\dim\left(\operatorname{Range}
      ((\tau_n^k)^{-1}
      P_{\lambda,\eps}(A+B^k)\tau_n^k)
 \right)
    =\dim\left(\operatorname{Range}(
      P_{\lambda,\eps}(A+B^k))\right).
 \end{equation*}
Remark here that it may very well be that $\lambda\not\in\sigma(A+B^k)$ and $P_{\lambda,\eps}(A+B^k)$ has null range. 

For the second term in the expression of $P_n$, we argue as follows. For $j\neq k$, we have
 \begin{equation*}
    (A+B^k_n-\mu I)^{-1}B^j_n (A+B^j_n-\mu
    I)^{-1}
=(\tau_n^j)^{-1}\tau_n^{j/k}(A+B^k-\mu
    I)^{-1}\tau_n^{k/j} B^j (A+B^j-\mu I)^{-1}\tau_n^j.
 \end{equation*}
We will therefore analyze the operator
\begin{equation}
\label{eq:4}
Q_{n,k,j}(\mu)=\tau_n^{j/k}(A+B^k-\mu
    I)^{-1}\tau_n^{k/j} B^j (A+B^j-\mu I)^{-1}.
 \end{equation}
It is well-known (see e.g.~\cite[III. \S 6. 4. and V. \S 3. 5.]{Ka76}) that the resolvent of a self-adjoint operator $U$ around an isolated eigenvalue $\lambda$ verifies 
\[
(U-\mu I)^{-1}=\frac{P_{\lambda}}{\lambda-\mu}+(U-\lambda I)^{-1}(I-P_\lambda)+U(\mu),
\]
where $P_{\lambda}$ is the orthogonal projection on the eigenspace corresponding to $\lambda$ and $U(\mu)$ is holomorphic in $\mu$ and verifies $U(\lambda)=0$.
Applying this to $A+B^l$ for $l=j,k$, we get
\[
(A+B^l-\mu I)^{-1}=\frac{P^l}{\lambda-\mu}+(A+B^l-\lambda I)^{-1}(I-P^l)+U^l(\mu)
\]
where we have used the notation $P^l=P_{\lambda,\eps}(A+B^l)$ and $U^l(\mu)$ is   holomorphic in $\mu$ and verifies $U^l(\lambda)=0$. Consequently, we have 
 \begin{multline*}
  Q_{n,k,j}(\mu)
  =
\tau_n^{j/k}
\left( 
\frac{P^k}{\lambda-\mu}+(A+B^k-\lambda)^{-1}(I-P^k)+U^k(\mu)
 \right)
\tau_n^{k/j} \\
\times  B^j \times 
\\
\left(
\frac{P^j}{\lambda-\mu}+(A+B^j-\lambda)^{-1}(I-P^j)+U^j(\mu)
\right).
\end{multline*}
The residue of  the operator $Q_{n,k,j}$ given by~\eqref{eq:4} at $\lambda$ is thus given by
 \begin{equation}
\label{eq:2}
\tau_n^{j/k}
P^k
\tau_n^{k/j} 
B_j
(A+B^j-\lambda)^{-1}(I-P^j)
+
\tau_n^{j/k}
 (A+B^k-\lambda)^{-1}(I-P^k)
\tau_n^{k/j} 
B^jP^j.
\end{equation}

The second term in~\eqref{eq:2}  is treated in the following
way. Since $P^j$ projects on the kernel of $A+B^j-\lambda I$, we have 
 \begin{multline*}
    B^jP^j
=-(A-\lambda I)P^j 
=-(A+\tau_n^{j/k} B^k \tau_n^{k/j} -\lambda I)P^j+\tau_n^{j/k} B^k \tau_n^{k/j} P^j 
\\= -\tau_n^{j/k} (A+B^k   -\lambda I) \tau_n^{k/j} P^j+\tau_n^{j/k} B^k \tau_n^{k/j} P^j.
 \end{multline*}
Therefore, we have
\begin{multline*}
 \tau_n^{j/k}
 (A+B^k-\lambda)^{-1}(I-P^k)
\tau_n^{k/j} 
B^jP^j
=-P^j
+\tau_n^{j/k}
P^k
\tau_n^{k/j} 
P^j+\tau_n^{j/k}
 (A+B^k-\lambda)^{-1}(I-P^k)
B^k \tau_n^{k/j}  P^j.
\end{multline*}
We claim that, as $n$ tends to infinity, only the term $-P^j$ will remain. Indeed, let $(\xi_k^p)_{p=1,\dots,P}$ and $(\xi_j^q)_{q=1,\dots,Q}$ be normalized bases for the (finite dimensional) subspaces on which $P^k$ and $P^j$ project. Given $u\in X$, we have
\[
\tau_n^{j/k}
P^k
\tau_n^{k/j} 
P^j u=
\sum_{\substack{p=1,\dots,P\\q=1,\dots,Q}}\scalar{\xi_j^q}{u}_{X}\scalar{\tau_n^{j/k}\xi_k^p }{\xi_j^q}_X \tau_n^{k/j}\xi_j^q.
\]
Therefore, we have
\[
\norm{\tau_n^{j/k}
P^k
\tau_n^{k/j} 
P^j }\lesssim \sum_{\substack{p=1,\dots,P\\q=1,\dots,Q}}\scalar{\tau_n^{j/k}\xi_k^p }{\xi_j^q}_X
\]
By assumption (A7), the right hand side goes to $0$ as $n\to\infty$. In addition, since $P^j$ has finite range and $ (A+B^k-\lambda)^{-1}(I-P^k)
$ is bounded, by assumption (A6), as $n\to\infty$, we have 
\[
\norm{\tau_n^{j/k} (A+B^k-\lambda)^{-1}(I-P^k)
B^k \tau_n^{k/j}  P^j}\to 0,
\]
which proves our claim. 

The first term in~\eqref{eq:2} will vanish as $n\to\infty$ as we now show.  By assumption (A4), the operator 
\[
A
(A+B^j-\lambda)^{-1}(I-P^j)
\]
is bounded (note that here assumption (A4) remains valid even if $\lambda\in\sigma(A+B^j)$ as we are projecting out the spectral subspace associated with $\lambda$). By assumption (A8), the operator
\[
B_jA^{-1}A
(A+B^j-\lambda)^{-1}(I-P^j)
\]
is compact, which combined with assumption (A7) shows that as $n\to\infty$ we have
\[
\norm{\tau_n^{j/k}
P^k
\tau_n^{k/j} 
B_j
(A+B^j-\lambda)^{-1}(I-P^j)}\to 0. 
\]

Summarizing the previous analysis, we have shown that 
\[
\lim_{n\to\infty}\norm[\bigg]{P_n-(\tau_n^k)^{-1} P^k \tau_n^k-\sum_{\substack{j=1,\dots,N\\j\neq k}}(\tau_n^j)^{-1} P^j \tau_n^j}=0. 
\]
Therefore, for $n$ large enough we have 
\[
\dim\left(\operatorname{Range}\left( P_n \right)\right)=\sum_{j=1,\dots,N}\dim\left(\operatorname{Range}\left( P^j \right)\right).
\]
This concludes the proof.
\end{proof}

\section{Spectral Analysis}
\label{sepecsec}

In the theory of stability of solitary waves (as developed e.g. in~\cite{GrShSt87,We85}  or more recently in~\cite{deGeRo15}), it is customary
to use the coercivity properties of a linearized operator around the
solitary wave to obtain the stability estimate. If the perturbation is
set at the level of the solitary wave profile, the
corresponding linearized operator is independent of time. When trying
to adopt a similar strategy for multi-solitons, it is not possible to
write the perturbation at the level of a profile 
independent of time and the linearized operator is necessarily time
dependent.

The combination of two main
arguments allows to overcome this difficulty. First, we have shown in Section~\ref{sec:spectral-theory}
that a form of iso-spectrality holds for linearized operators
around a multi-soliton, in the sense that the inertia (i.e. the number
of negative eigenvalues and the dimension of the kernel, see
Definition~\ref{def:inertia} below) is preserved along the
time evolution. Second, at large time, the linearized operator can be
viewed as a composition of several decoupled linearized operators
around each of the soliton profiles composing the multi-soliton, and
the spectrum of the multi-soliton linearized operator will converge to
the union of the spectra of the linearized operators around each
soliton.

\subsection{The auxiliary operators $M_c$ and $M_c^t$}

Let $c>0$ and consider the associated soliton profile $Q_c$  given in~\eqref{eq:Q}. 
We introduce an auxiliary linear operator $M_c$ and its adjoint $M_c^t$, defined as follows:
\begin{equation}
 \label{Mt}
 \begin{gathered}
    M_c, M_c^t: D(M_c)=D(M_c^t)=H^1(\R)\subset L^2(\R)\rightarrow L^2(\R),\\
    M_ch(x)=h'(x)+\sqrt{c}\tanh(\sqrt{c}x) h(x), \quad M_c^tk(x)=-k'(x)+\sqrt{c}\tanh(\sqrt{c}x) k(x).
 \end{gathered}
\end{equation}
The operators $M_c$ and $M_c^t$ are linked with $Q_c$ by the following observation. Given $h,k\in H^1(\R)$, we have
 \begin{equation}
M_ch=Q_c\partial_x\left(\frac h{Q_c}\right),\quad M_c^tk=-\frac1Q_c\partial_x(Q_ck).\label{eq:6}
\end{equation}
The operators $M_c$ and $M_c^t$ are linked to Darboux transformations and the factorization of Schr\"odinger operators. As such, their use is not limited to integrable equations and they appear in other contexts, see in particular~\cite[Section 3.2]{ChGuNaTs07}.
The auxiliary operators $M_c$ and $M_c^t$ verify the following properties (see e.g.~\cite[Lemma 5]{NeLo06}).

\begin{lemma}\label{lem:3.2.} Let $M_c,M_c^t$ be given by~\eqref{Mt}. The following properties are verified.
 \begin{itemize}
 \item The operators $M_c$ and $M_c^t$ map odd functions on even functions and even functions on odd functions.
 \item The null space of $M_c$ is spanned by $Q_c$ and $M_c^t$
    is injective.
 \item The operator $M_c$ is surjective and the image of $M_c^t$ is the $L^2(\R)$-subspace orthogonal to $Q_c$.
 \end{itemize}
\end{lemma}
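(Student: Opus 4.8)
My plan is to obtain the three assertions essentially by hand, exploiting the factorized formulas~\eqref{eq:6} and the explicit profile $Q_c(x)=\sqrt{2c}\,\sech(\sqrt c x)$ coming from~\eqref{eq:Q}. The parity statement is immediate from~\eqref{Mt}: the function $x\mapsto\sqrt c\tanh(\sqrt c x)$ is odd and differentiation reverses parity, so each summand of $M_ch(x)=h'(x)+\sqrt c\tanh(\sqrt c x)h(x)$ has parity opposite to that of $h$; replacing $x$ by $-x$ and using $h'(-x)=\mp h'(x)$ and $\tanh(-\sqrt c x)=-\tanh(\sqrt c x)$ gives $M_ch(-x)=\mp M_ch(x)$ for $h$ even (resp. odd), and the same computation applies verbatim to $M_c^t$.

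For the second assertion I would use~\eqref{eq:6}. Since $Q_c>0$ on $\R$, the identity $M_ch=Q_c\partial_x(h/Q_c)$ shows that $M_ch=0$ forces $h/Q_c$ to be constant on the connected set $\R$, i.e. $h\in\Span\{Q_c\}$; as $Q_c\in H^1(\R)$ by its exponential decay and $M_cQ_c=0$, this gives $\ker M_c=\Span\{Q_c\}$. Similarly $M_c^tk=-Q_c^{-1}\partial_x(Q_ck)=0$ forces $Q_ck$ constant; but $k\in H^1(\R)$ vanishes at $\pm\infty$ and $Q_c$ is bounded, so $Q_ck\equiv0$, hence $k\equiv0$ and $M_c^t$ is injective.

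The third assertion is the only part requiring real work, and I would prove it by producing explicit preimages. For $M_c$, given $g\in L^2(\R)$ set $h(x)=Q_c(x)\int_0^xg(y)/Q_c(y)\,\rmd y$; using~\eqref{eq:6} one checks directly that $M_ch=g$. For $M_c^t$, one first notes, integrating by parts (the $W^{1,1}(\R)$ function $Q_ck$ vanishes at $\pm\infty$), that $\scalar{M_c^tk}{Q_c}_{L^2}=-\int_\R\partial_x(Q_ck)\,\rmd x=0$, so $\operatorname{Range}(M_c^t)\subset Q_c^{\perp}:=\{g\in L^2(\R):\scalar{g}{Q_c}_{L^2}=0\}$; conversely, for $g\in Q_c^{\perp}$ the function $G(x)=\int_{-\infty}^xQ_c(y)g(y)\,\rmd y$ is well defined (as $Q_cg\in L^1(\R)$) and satisfies $G(\pm\infty)=0$ because $\scalar{Q_c}{g}_{L^2}=0$, and $k:=-G/Q_c$ satisfies $M_c^tk=g$ by a direct computation. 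In both cases, since $m:=\sqrt c\tanh(\sqrt c\,\cdot)$ is bounded, once one knows that $h$ (resp. $k$) lies in $L^2(\R)$ the relation $h'=g-mh$ (resp. $k'=mk-g$) places it in $H^1(\R)$, which is what is needed. The main obstacle I anticipate is precisely this $L^2$ control of the primitives near $x=\pm\infty$: the naive Cauchy--Schwarz bound is too lossy. Instead I would use the elementary sharp comparison $Q_c(x)/Q_c(y)\leq2e^{-\sqrt c|x-y|}$ whenever $y$ lies between $0$ and $x$ (and its mirror image $Q_c(y)/Q_c(x)\leq2e^{-\sqrt c|x-y|}$ when $|y|\geq|x|$ with $x,y$ of the same sign), which follows from $\tfrac12e^t\leq\cosh t\leq e^t$ for $t\geq0$. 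This reduces the pointwise estimates to $|h(x)|,|k(x)|\leq2\,(K*|g|)(x)$ with $K(z)=e^{-\sqrt c|z|}\in L^1(\R)$, and Young's convolution inequality then gives $\norm h_{L^2}\lesssim\norm g_{L^2}$ and $\norm k_{L^2}\lesssim\norm g_{L^2}$. Hence $M_c$ is onto and $\operatorname{Range}(M_c^t)=Q_c^{\perp}$.

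Alternatively, once $\operatorname{Range}(M_c^t)=Q_c^{\perp}$ is established as a closed subspace, surjectivity of $M_c$ is automatic: $M_c$ and $M_c^t$ are mutually $L^2$-adjoint with common domain $H^1(\R)$ (integration by parts, with $m\in L^\infty$ used to identify the domains), so the closed-range theorem yields $\operatorname{Range}(M_c)=(\ker M_c^t)^{\perp}=L^2(\R)$. In any case this is a mild elaboration of~\cite[Lemma 5]{NeLo06}, which one could also simply invoke.
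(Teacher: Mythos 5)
Your proof is correct, and its skeleton coincides with the paper's: parity by inspection of~\eqref{Mt}, the kernel computations via the factorized forms~\eqref{eq:6} (the paper dismisses $\ker M_c^t$ by noting $1/Q_c\notin H^1(\R)$, you by noting that $Q_ck$ must vanish at infinity; both work), and surjectivity via the same explicit right inverses $g\mapsto Q_c\int_0^x g/Q_c$ and $g\mapsto -Q_c^{-1}\int_{-\infty}^x gQ_c$. The one place where you genuinely diverge is the $L^2$ boundedness of these integral operators, which is indeed the only substantive step. The paper proves boundedness on $L^1(\R)$ (by an integration by parts using the explicit antiderivative of $\sech$ and the identity $\arctan s+\arctan(1/s)=\pi/2$) and on $L^\infty(\R)$ (by computing $Q_c(x)\int_0^x \rmd y/Q_c(y)=\tanh(\sqrt c x)/\sqrt c$), and then interpolates to get $L^2$. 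You instead dominate the kernel pointwise by $2e^{-\sqrt c|x-y|}$ using $\tfrac12 e^t\leq\cosh t\leq e^t$, reduce to a convolution with an $L^1$ kernel, and apply Young's inequality; for $M_c^t$ you correctly flip the integral to $\int_x^\infty$ on $x\geq 0$ using the orthogonality $\scalar{g}{Q_c}_{L^2}=0$ so that the comparison goes the right way. Your route is slightly more direct (no interpolation, and it gives boundedness on every $L^p$ at once), while the paper's $L^1$--$L^\infty$ argument packages the same exponential cancellation through explicit antiderivatives. Both are complete; your closed-range-theorem shortcut for the surjectivity of $M_c$ is also a valid alternative to exhibiting a preimage.
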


\begin{proof}
  That $M_c$ and $M_c^t$ map odd (resp. even) functions to even (resp. odd) functions is easily seen from their definition in~\eqref{Mt}, using in particular the oddness of $x\mapsto\tanh(x)$.

  Let $h\in H^1(\R)$ be such that $M_ch=0$. From the expression of $M_c$ in terms of $Q_c$ given in~\eqref{eq:6}, this implies that $h/Q_c$ is constant, i.e. $h$ is a multiple of $Q_c$. Hence we indeed have $\ker(M_c)=\Span(Q_c)$.

  Let $k\in H^1(\R)$ be such that $M_c^tk=0$. From the expression of $M_c^t$ in terms of $Q_c$ given in~\eqref{eq:6}, this implies that $Q_ck$ is constant, i.e. $k$ is a multiple of $1/Q_c$. However, $1/Q_c$ does not belong to $H^1(\R)$, hence $k=0$. This gives the injectivity of $M_c^t$.

  From the preceding observations combined with the fact that $M_c^t$ is the adjoint of $M_c$, we have
 \[
\overline{\operatorname{im}(M_c)}=\ker(M_c^t)^\perp=L^2(\R),\quad \overline{\operatorname{im}(M_c^t)}=\ker(M_c)^\perp=Q_c^\perp. 
\]
It remains to prove that both images are closed.

We start with $\operatorname{im}(M_c)$. 
Let $g\in L^2(\R)$. We look for $h\in H^1(\R)$ such that $M_ch=g$. To this aim, we define the operator $\mathcal T$ by
\[
\mathcal Tg(x)=Q_c(x)\int_0^x\frac {g(y)}{Q_c(y)}\rmd y.
\]
We clearly have
\[
(\mathcal Tg)'-\frac{Q_c'}{Q_c}\mathcal Tg=(\mathcal Tg)'+\sqrt{c}\tanh(\sqrt{c}x) \mathcal Tg=g,
\]
hence we only have to prove that $(\mathcal Tg)\in L^2(\R)$ to prove that $(\mathcal Tg)\in H^1(\R)$ and $M_c(\mathcal Tg)=g$. 
We will prove the operator $\mathcal{T}$ is bounded in $L^1(\R)$ and $L^\infty(\R)$ respectively, thus in $L^2(\R)$ by interpolation. Recall the explicit expression of $Q_c$ given in~\eqref{eq:Q}: $Q_c(x)=\sqrt{2c}\sech(\sqrt{c}x)$. Hence, we have
\[
 \left|Q_c(x)\int_0^x \frac{\rmd y}{Q_c(y)}\right|=\left|\frac{1}{\sqrt{c}}\sinh (\sqrt{c}x)\sech(\sqrt{c}x)\right|=\left|\frac{1}{\sqrt{c}}\tanh(\sqrt{c}x)\right|
\]
and we see that $\mathcal{T}$ is bounded in $L^\infty(\R)$. We now prove that  $\mathcal{T}$ is bounded in $L^1(\R)$.  Let $a>0$ and $g\in L^1(\R)$. By integration by parts, we have
\begin{multline*}
 \int_0^aQ_c(x)\int_0^x \frac{|g(y)|}{Q_c(y)}\rmd y\rmd x
  =
 \int_0^a\partial_x\left(-\int_x^aQ_c(s)ds\right)\int_0^x \frac{|g(y)|}{Q_c(y)}\rmd y\rmd x
 \\
 = \frac{1}{\sqrt{c}}\int_0^a\left(\arctan\big(\sinh (\sqrt{c}a)\big)-\arctan\big(\sinh (\sqrt{c}x)\big)\right)\cosh(\sqrt{c}x)|g(x)|\rmd x\\
 = \frac{1}{\sqrt{c}}\int_0^a\left(\arctan\big(\sinh (\sqrt{c}a)\big)-\frac\pi2+\arctan\left(\frac{1}{\sinh (\sqrt{c}x)}\right)\right)\cosh(\sqrt{c}x)|g(x)|\rmd x\\
\leq\frac{1}{\sqrt{c}}\int_0^a\arctan\left(\frac{1}{\sinh (\sqrt{c}x)}\right)\cosh(\sqrt{c}x)|g(x)|\rmd x\leq C\int_0^a|g(x)|\rmd x,
\end{multline*}
where we have used the famous calculus formula
\[
\arctan(x)+\arctan\left(\frac1x\right)=\frac\pi2.
 \]
The case $a<0$ can be treated in a similar way. This shows the boundedness of $\mathcal{T}$ in $L^1(\R)$. By interpolation, $\mathcal T$ is also bounded in $L^2(\R)$.

We now consider $\operatorname{im}(M_c^t)$. 
Let $g\in L^2(\R)$ be such that $\scalar{g}{Q_c}_{L^2}=0$. We look for $k\in H^1(\R)$ such that $M_c^tk=g$. Using~\eqref{eq:6}, we define
\[
\mathcal Sg=k(x)=-\frac{1}{Q_c(x)}\int_{-\infty}^xg(y)Q_c(y)\rmd y.
\]
From similar arguments as before,  the operator $\mathcal S$ is bounded in $L^2$ and verifies $M_c^t\mathcal Sg=g$, which concludes the proof. 
\end{proof}

The operators $M_c$ and $M_c^t$ have remarkable algebraic properties. We give the simplest ones in the following lemma. 

\begin{lemma}
 \label{lem:prop_M}
  The following identities hold
  \begin{align}
    M_cM_c^t&=-\partial_x^2+c,&
    M_c^tM_c&=-\partial_x^2+c-Q_c^2,\\
    M_c(-\partial_x^2-2Q_c\partial_x^{-1}(Q_c\partial_x))&=(-\partial_x^2-Q_c^2)M_c,&
    (-\partial_x^2-Q_c^2)M_c^t&=M_c^t(-\partial_x^2),\label{Mt7}\\
    M_cQ_c&=0,&
    M_c^tQ_c&=-2(Q_c)_x,\label{Mt8}\\
    M_c(xQ_c)&=Q_c,&
    M_c^t(xQ_c)&=-Q_c-2x(Q_c)_x.\label{eq:MtxQ}
 \end{align}
\end{lemma}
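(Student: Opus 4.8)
The plan is to reduce every identity to elementary relations for the single function $\theta(x):=\sqrt c\tanh(\sqrt c x)$. From the explicit formula $Q_c(x)=\sqrt{2c}\,\sech(\sqrt c x)$ in~\eqref{eq:Q} one reads off $Q_c'=-\theta Q_c$ (equivalently $\theta=-Q_c'/Q_c$), the Riccati relation $\theta'=c-\theta^2$, and the consequence $Q_c^2=2c\,\sech^2(\sqrt c x)=2\theta'$. In this notation $M_ch=h'+\theta h$ and $M_c^tk=-k'+\theta k$, so each claimed identity becomes a short computation; I would carry all of them out on a dense subspace such as $\mathcal S(\R)$ and then extend by density and by closedness of the operators involved.

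For the two identities in the first line I would simply expand the compositions: a direct computation gives $M_cM_c^tk=-k''+(\theta'+\theta^2)k$ and $M_c^tM_ch=-h''-(\theta'-\theta^2)h$, and then $\theta'+\theta^2=c$ and $\theta'-\theta^2=c-2\theta^2=Q_c^2-c$ yield $M_cM_c^t=-\partial_x^2+c$ and $M_c^tM_c=-\partial_x^2+c-Q_c^2$. The intertwining identity $(-\partial_x^2-Q_c^2)M_c^t=M_c^t(-\partial_x^2)$ in the second line then follows for free: by associativity $M_c^t(M_cM_c^t)=(M_c^tM_c)M_c^t$, i.e. $M_c^t(-\partial_x^2+c)=(-\partial_x^2+c-Q_c^2)M_c^t$, and subtracting $c\,M_c^t$ from both sides gives the claim. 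Finally, the four identities in the last two lines are one-line computations using $\theta Q_c=-Q_c'$: $M_cQ_c=Q_c'+\theta Q_c=0$; $M_c^tQ_c=-Q_c'+\theta Q_c=-2Q_c'$; $M_c(xQ_c)=Q_c+xQ_c'+\theta xQ_c=Q_c$; and $M_c^t(xQ_c)=-Q_c-xQ_c'+\theta xQ_c=-Q_c-2xQ_c'$.

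The only identity that needs a little extra care is the remaining intertwining relation $M_c(-\partial_x^2-2Q_c\partial_x^{-1}(Q_c\partial_x))=(-\partial_x^2-Q_c^2)M_c$, because of the nonlocal operator $\partial_x^{-1}$; note that it does \emph{not} follow from the previous ones. For $h\in\mathcal S(\R)$ set $w:=\partial_x^{-1}(Q_ch')$, so that $w'=Q_ch'$ and $w$ is bounded; the particular normalization of $\partial_x^{-1}$ will be irrelevant. Then
\[
M_c\bigl(-h''-2Q_cw\bigr)=-h'''-\theta h''-2Q_c^2h'-2(Q_c'+\theta Q_c)\,w=-h'''-\theta h''-2Q_c^2h',
\]
the crucial point being that the coefficient $Q_c'+\theta Q_c$ of the nonlocal term $w$ vanishes identically. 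On the other hand, expanding $(-\partial_x^2-Q_c^2)(h'+\theta h)$ and simplifying with $2\theta'=Q_c^2$ and $\theta''=-2\theta\theta'=-\theta Q_c^2$ produces exactly $-h'''-\theta h''-2Q_c^2h'$ as well, so the two sides agree on $\mathcal S(\R)$ and hence on the appropriate domain. The main (and really only) obstacle in the whole lemma is this bookkeeping with the nonlocal term; everything else is mechanical once $\theta$ and its Riccati relation are in hand.
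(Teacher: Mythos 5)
Your proof is correct: all the identities check out, including the only delicate one, the intertwining relation with the nonlocal term, where you rightly observe that the coefficient $Q_c'+\theta Q_c$ multiplying $\partial_x^{-1}(Q_ch')$ vanishes identically so the normalization of $\partial_x^{-1}$ is immaterial. The paper omits the proof entirely, stating only that the identities follow from elementary calculations, which is precisely what you have carried out (with the nice extra touch of deducing the second intertwining identity from associativity of $M_c^tM_cM_c^t$ rather than recomputing it).
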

Each of the identities of Lemma~\ref{lem:prop_M} may be obtain by elementary calculations. We omit the details here. 

\subsection{Spectra of linearized operators around $1$-soliton profiles}
\label{sec:spectra-line-oper}

Let $N\in\mathbb N$ and $0<c_1\leq \cdots \leq c_N$. Denote by $1,\lambda_N,\dots,\lambda_1$ the coefficients of the polynomial whose roots are $(-c_j)$ (see~\eqref{eq:vieta}). Let $S_N$ be the corresponding functional defined in~\eqref{eq:def-S_N}.  For any $j=1,\dots,N$, define operators $L_{N,j}:H^N(\R)\subset L^2(\R)\to L^2(\R)$ by
 \begin{equation*}
  L_{N,j}:=S_N''(Q_{c_j}).
\end{equation*}
For brevity, we use the notation
\[
M_j:=M_{c_j},\quad M_j^t:=M_{c_j}^t.
 \]
The main interest of the auxiliary operators $M_j$ and $M_j^t$  stems from the following result, which gives a factorization of $L_{N,j}$ in terms of pure differential operators.

\begin{proposition}
 \label{prop:factorization_L_N_j}
For any $j=1,\dots,N$, the operator $L_{N,j}$ verifies the following factorization
 \[
M_jL_{N,j}M_j^t=M_j^t\left(\prod_{k=1}^{N}(-\partial_x^2+c_k)\right)M_j.
 \]
 \end{proposition}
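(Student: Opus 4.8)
The plan is to rewrite $S_N'$ in a factored, hierarchy‑adapted form, read off $L_{N,j}$ from it by differentiation, and then conjugate by $M_j,M_j^t$ using the intertwining identities of Lemma~\ref{lem:prop_M}.

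\emph{Step 1: factoring $S_N'$.} Introduce the operator $\mathcal P(u):=-\partial_x^2-2u\partial_x^{-1}(u\partial_x)$. A one‑line computation shows that the recursion operator of~\eqref{eq:skew K} satisfies $\mathcal K(u)=\partial_x\circ\mathcal P(u)$, so that, by~\eqref{eq:recursion}, $\partial_x\big(H_{m+1}'(u)-\mathcal P(u)H_m'(u)\big)=0$. For $u\in\mathcal S(\R)$ both $H_{m+1}'(u)$ and $\mathcal P(u)H_m'(u)$ belong to $\mathcal S(\R)$, so the difference is a constant which must vanish; hence $H_{m+1}'(u)=\mathcal P(u)H_m'(u)$ for $m\ge1$, and iterating from $H_1'(u)=u$ gives $H_m'(u)=\mathcal P(u)^{m-1}u$. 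Combining this with Vieta's formulas~\eqref{eq:vieta} and the definition~\eqref{eq:def-S_N} of $S_N$ yields
\[
S_N'(u)=\Big(\mathcal P(u)^N+\lambda_N\mathcal P(u)^{N-1}+\dots+\lambda_1\Big)u=\prod_{k=1}^N\big(\mathcal P(u)+c_k\big)\,u,
\]
the factors commuting because $\mathcal P(u)$ is a single fixed operator. (All identities are established first on $\mathcal S(\R)$ and then extended to $H^{N+1}(\R)$ by density and closedness; the local operators $S_N''(Q_{c_j})$ and $\prod_k(\mathcal P_j+c_k)$ make sense on $H^N(\R)$ although the individual pieces $\mathcal P_j$ do not.)

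\emph{Step 2: reading off $L_{N,j}$.} Fix $j$ and write $\mathcal P_j:=\mathcal P(Q_{c_j})$. By Step 1, $\mathcal P_jQ_{c_j}=H_2'(Q_{c_j})=-(Q_{c_j})_{xx}-Q_{c_j}^3$ (using~\eqref{ener}), which by the profile equation~\eqref{eq:stationary} equals $-c_jQ_{c_j}$; equivalently, this is the $1$‑soliton variational principle~\eqref{eq:1-sol variaprinciple} for $n=1$, and it says that $Q_{c_j}$ is an eigenfunction of $\mathcal P_j$ for the eigenvalue $-c_j$. Differentiating $S_N'(u)=\big(\prod_{k\ne j}(\mathcal P(u)+c_k)\big)(\mathcal P(u)+c_j)u$ at $u=Q_{c_j}$, with the factor $(\mathcal P(u)+c_j)$ written last: every term in which one of the outer factors is differentiated carries the right‑hand factor $(\mathcal P_j+c_j)Q_{c_j}=0$ and so vanishes, leaving
\[
L_{N,j}=S_N''(Q_{c_j})=\prod_{k=1}^N(\mathcal P_j+c_k)+\Big(\prod_{k\ne j}(\mathcal P_j+c_k)\Big)\mathcal R_j,\qquad \mathcal R_j:=\big[\mathcal P'(Q_{c_j})\,\cdot\,\big]Q_{c_j},
\]
and a direct computation gives $\mathcal R_jv=-Q_{c_j}^2v-2Q_{c_j}\partial_x^{-1}\big(v(Q_{c_j})_x\big)$.

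\emph{Step 3: conjugation and telescoping.} Set $\mathcal L_k:=-\partial_x^2+c_k-Q_{c_j}^2$. Adding $c_k$ to the intertwining relations~\eqref{Mt7} gives $M_j(\mathcal P_j+c_k)=\mathcal L_kM_j$ and $\mathcal L_kM_j^t=M_j^t(-\partial_x^2+c_k)$, while the first line of the displayed identities in Lemma~\ref{lem:prop_M} gives $M_jM_j^t=-\partial_x^2+c_j$ and $M_j^tM_j=\mathcal L_j$. Pushing $M_j$ through the factors,
\[
M_jL_{N,j}M_j^t=\Big(\prod_{k=1}^N\mathcal L_k\Big)M_jM_j^t+\Big(\prod_{k\ne j}\mathcal L_k\Big)M_j\mathcal R_jM_j^t.
\]
Using~\eqref{eq:6} (so $M_jh=Q_{c_j}\partial_x(h/Q_{c_j})$ and $M_j^tk=-Q_{c_j}^{-1}\partial_x(Q_{c_j}k)$) one finds $M_j\mathcal R_j=M_j^t\circ(Q_{c_j}^2\,\cdot\,)$, and then, using~\eqref{eq:stationary} once more, the key identity $M_j\mathcal R_jM_j^t=-\mathcal L_jQ_{c_j}^2$ (meaning the operator $w\mapsto-\mathcal L_j(Q_{c_j}^2w)$). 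Substituting and collapsing $\prod_{k\ne j}\mathcal L_k\cdot\mathcal L_j=\prod_k\mathcal L_k$,
\[
M_jL_{N,j}M_j^t=\Big(\prod_{k}\mathcal L_k\Big)\big(M_jM_j^t-Q_{c_j}^2\big)=\Big(\prod_{k}\mathcal L_k\Big)M_j^tM_j=M_j^t\Big(\prod_{k=1}^N(-\partial_x^2+c_k)\Big)M_j,
\]
where the last equality uses $M_jM_j^t-Q_{c_j}^2=M_j^tM_j$ and then $\mathcal L_kM_j^t=M_j^t(-\partial_x^2+c_k)$. This is the claimed factorization.

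\emph{Main difficulty.} Once Steps 1--2 are set up, Step 3 is essentially bookkeeping with Lemma~\ref{lem:prop_M}. The two places requiring genuine (though short) verification are: the identity $\mathcal K(u)=\partial_x\mathcal P(u)$ together with the upgraded recursion $H_{m+1}'(u)=\mathcal P(u)H_m'(u)$, which is the only point where the non‑local operator $\partial_x^{-1}$ must be controlled (hence the passage through $\mathcal S(\R)$ and the extension argument); and the computation $M_j\mathcal R_jM_j^t=-\mathcal L_jQ_{c_j}^2$, where the profile equation~\eqref{eq:stationary} enters and where the non‑local contributions coming from $\mathcal R_j$ and from the $\partial_x^{-1}$ inside $\mathcal P_j$ must be seen to cancel exactly, leaving a differential operator.
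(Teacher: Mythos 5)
Your proof is correct, and its skeleton coincides with the paper's: both hinge on factoring $L_{N,j}$ through the recursion operator $\mathcal R(Q_{c_j})$ (your $\mathcal P_j$) as in \eqref{eq:7}, and then telescoping with the intertwining relations \eqref{Mt7}. The differences lie in how the two ingredients are obtained. For the factorization, the paper linearizes the recursion identity \eqref{eq:recursion} around $Q_c$ to get Lemma~\ref{lem:6.1.} with an explicit remainder term, then runs a finite induction on the Vieta coefficients (Lemma~\ref{lem:factor}), using at each step that $-c_j$ is a root of the reduced polynomial; you instead upgrade \eqref{eq:recursion} to the pointwise identity $H_m'(u)=\mathcal P(u)^{m-1}u$ on $\mathcal S(\R)$, write $S_N'(u)=\prod_k(\mathcal P(u)+c_k)u$, and differentiate, the cross terms being killed by $(\mathcal P_j+c_j)Q_{c_j}=0$. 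This is a slicker and more structural route to the same conclusion as Lemma~\ref{lem:factor} (your additive form $\prod_k(\mathcal P_j+c_k)+\prod_{k\neq j}(\mathcal P_j+c_k)\mathcal R_j$ refactors exactly into \eqref{eq:7}, since $(\mathcal P_j+c_j)+\mathcal R_j=H_2''(Q_{c_j})+c_jH_1''(Q_{c_j})$). For the conjugation, the paper quotes the $N=1$ identity \eqref{Mt1} of Lemma~\ref{lem:identities} as a base case, whereas you rederive it inline via $M_j\mathcal R_j=M_j^t(Q_{c_j}^2\,\cdot\,)$, $M_j\mathcal R_jM_j^t=-\mathcal L_j(Q_{c_j}^2\,\cdot\,)$ and $M_jM_j^t-Q_{c_j}^2=M_j^tM_j$; I checked these identities and they hold, with the profile equation \eqref{eq:stationary} entering exactly where you say. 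Both versions prove the same statement; yours is more self-contained and exposes the algebraic mechanism, at the cost of the extra computation in Step~3, while the paper's is shorter once Lemma~\ref{lem:identities} is granted.
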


  The proof of Proposition~\ref{prop:factorization_L_N_j} relies on several ingredients. We first prove the result for $N=1$. Then we establish an iteration identity at the level of the conserved quantities linearized around soliton profiles and use it to factorize the operators $L_{N,j}$. Finally, we obtain the conclusion by combining these elements with the properties of $M_j$ and $M_j^t$. 

  We start with the case $N=1$. By direct calculations, we have the
  following result (which has been used in particular in~\cite{Wa17b}). 
\begin{lemma}
 \label{lem:identities}
  The operator $L_{1,1}$ is given by
 \[
L_{1,1}=H_2''(Q_{c_1})+c_1H_1''(Q_{c_1})=-\partial_x^2+c_1-3Q_{c_1}^2.
 \]
  The following operator identity holds:
 \begin{align}
    M_1L_{1,1}M_1^t&=M_1^t\left(-\partial_x^2+c_1\right)M_1.\label{Mt1}
 \end{align}
\end{lemma}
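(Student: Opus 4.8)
The first assertion is a direct second-variation computation. For $N=1$, Vieta's formula~\eqref{eq:vieta} (case $k=1$) gives $\lambda_1=c_1$, so that $S_1=H_2+c_1H_1$. From~\eqref{momentum} one gets $H_1''(u)h=h$, and from~\eqref{ener} one gets $H_2''(u)h=-h_{xx}-3u^2h$; evaluating at $u=Q_{c_1}$ yields $L_{1,1}=S_1''(Q_{c_1})=-\partial_x^2+c_1-3Q_{c_1}^2$, as claimed.

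For the operator identity~\eqref{Mt1}, the plan is to reduce everything to the factorizations of Lemma~\ref{lem:prop_M}. Write $P:=-\partial_x^2+c_1=M_1M_1^t$ and note that $M_1^tM_1=-\partial_x^2+c_1-Q_{c_1}^2=P-Q_{c_1}^2$, so that $L_{1,1}=M_1^tM_1-2Q_{c_1}^2$, where $Q_{c_1}^2$ denotes multiplication by $Q_{c_1}^2$. Then, by associativity,
\[
M_1L_{1,1}M_1^t=M_1(M_1^tM_1)M_1^t-2M_1Q_{c_1}^2M_1^t=(M_1M_1^t)^2-2M_1Q_{c_1}^2M_1^t=P^2-2M_1Q_{c_1}^2M_1^t,
\]
whereas
\[
M_1^t\left(-\partial_x^2+c_1\right)M_1=M_1^t(M_1M_1^t)M_1=(M_1^tM_1)^2=(P-Q_{c_1}^2)^2.
\]
Hence~\eqref{Mt1} is equivalent to the differential-operator identity
\[
2M_1Q_{c_1}^2M_1^t=P^2-(P-Q_{c_1}^2)^2=PQ_{c_1}^2+Q_{c_1}^2P-Q_{c_1}^4.
\]

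To verify this last identity, I would apply both sides to $\phi\in\mathcal S(\R)$. Using the factored forms~\eqref{eq:6}, namely $M_1^t\phi=-Q_{c_1}^{-1}(Q_{c_1}\phi)'$ and $M_1\psi=Q_{c_1}(\psi/Q_{c_1})'$, the left-hand side telescopes:
\[
M_1Q_{c_1}^2M_1^t\phi=M_1\bigl(-Q_{c_1}(Q_{c_1}\phi)'\bigr)=Q_{c_1}\left(\frac{-Q_{c_1}(Q_{c_1}\phi)'}{Q_{c_1}}\right)'=-Q_{c_1}(Q_{c_1}\phi)''.
\]
Expanding all derivatives on both sides and cancelling the common terms, the identity collapses to the pointwise relation $2(Q_{c_1}')^2=2c_1Q_{c_1}^2-Q_{c_1}^4$. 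This is precisely the first integral of the profile equation~\eqref{eq:stationary}: multiplying $-Q_{c_1}''+c_1Q_{c_1}-Q_{c_1}^3=0$ by $Q_{c_1}'$ and integrating gives $-(Q_{c_1}')^2+c_1Q_{c_1}^2-\tfrac12Q_{c_1}^4=\mathrm{const}$, and the constant vanishes since $Q_{c_1},Q_{c_1}'\to0$ at $\pm\infty$. The identity then extends to the stated domains by density. The whole argument is elementary; the only non-algebraic ingredient is the first-integral relation for $Q_{c_1}$, so the only real "obstacle" is organising the operator bookkeeping so that precisely this relation is what survives after the cancellations.
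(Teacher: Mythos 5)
Your proof is correct, and since the paper dispatches this lemma with "by direct calculations" and omits all details, your argument is essentially the same approach, just organized more cleanly: reducing both sides to $(M_1M_1^t)^2-2M_1Q_{c_1}^2M_1^t$ versus $(M_1^tM_1)^2$ via Lemma~\ref{lem:prop_M} and collapsing the residual identity to the first integral $2(Q_{c_1}')^2=2c_1Q_{c_1}^2-Q_{c_1}^4$ of~\eqref{eq:stationary} is exactly the kind of bookkeeping the authors leave to the reader, and all your intermediate computations check out.
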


 \begin{remark}
  It would also be possible to obtain by direct calculations the result for $N=2$.
However, even for $N=3$ the calculations are becoming very intricate and it would not be reasonable to calculate by hand any further. 
\end{remark}

 \begin{lemma}
 \label{lem:6.1.}
    Let $Q_c$ be a soliton profile of~\eqref{eq:mkdv} with speed $c>0$ as given in~\eqref{eq:Q}. For any $n\in\mathbb N$, and for any $z\in H^n(\R)$ we have
\begin{equation}\label{eq:recursion Hamiltonian}
 H''_{n+1}(Q_c)z=\mathcal R(Q_c)H''_{n}(Q_c)z+(-1)^nc^{n-1}\left(Q_c^2z+2Q_c\partial_x^{-1}(Q_c'z)\right),
\end{equation}
where the \emph{recursion operator} $\mathcal R(Q_c)$ is defined by
 \begin{equation*}
\mathcal R(Q_c)=-\partial_x^2-2Q_c\partial_x^{-1}(Q_c\partial_x).
\end{equation*}
\end{lemma}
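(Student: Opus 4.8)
The plan is to derive the soliton-level recursion identity~\eqref{eq:recursion Hamiltonian} from the Lenard recursion~\eqref{eq:recursion} by linearizing it around the soliton profile $Q_c$, and then simplifying the nonlocal and potential terms using the profile equation~\eqref{eq:stationary}. First I would recall that~\eqref{eq:recursion} reads $\partial_x H_{n+1}'(u)=\mathcal K(u)H_n'(u)$ with $\mathcal K(u)=-\partial_x^3-2u^2\partial_x-2u_x\partial_x^{-1}(u\partial_x)$. Differentiating this identity in $u$ at $u=Q_c$ in a direction $z\in H^n(\R)$ gives, after applying $\partial_x^{-1}$ on the left,
\[
H_{n+1}''(Q_c)z=\partial_x^{-1}\bigl(\mathcal K(Q_c)\bigr)H_n''(Q_c)z+\partial_x^{-1}\bigl(\mathcal K'(Q_c)z\bigr)H_n'(Q_c),
\]
where $\mathcal K'(Q_c)z$ is the Fréchet derivative of $u\mapsto\mathcal K(u)$ at $Q_c$ applied to $z$. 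The term $\partial_x^{-1}\mathcal K(Q_c)$ is exactly $\mathcal R(Q_c)=-\partial_x^2-2Q_c\partial_x^{-1}(Q_c\partial_x)$ modulo the constant $c$ produced by the profile equation, so the first task is a careful bookkeeping of how $\partial_x^{-1}$ interacts with $-\partial_x^3$ and with the $u^2\partial_x$ term; this is where the profile equation $-Q_c''+cQ_c-Q_c^3=0$ enters to turn $Q_c^2$ into $Q_c''/Q_c+c$ and collapse terms.

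The second and more delicate task is to identify the inhomogeneous term $\partial_x^{-1}(\mathcal K'(Q_c)z)H_n'(Q_c)$ with the explicit expression $(-1)^nc^{n-1}(Q_c^2z+2Q_c\partial_x^{-1}(Q_c'z))$. Here I would use the $1$-soliton variational principle~\eqref{eq:1-sol variaprinciple}, which gives the chain $H_n'(Q_c)=(-c)^{n-1}H_1'(Q_c)=(-c)^{n-1}Q_c$ for all $n\geq1$; substituting $H_n'(Q_c)=(-c)^{n-1}Q_c$ into $\partial_x^{-1}(\mathcal K'(Q_c)z)H_n'(Q_c)$ reduces everything to computing $\partial_x^{-1}(\mathcal K'(Q_c)z)Q_c$. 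Differentiating $\mathcal K(u)=-\partial_x^3-2u^2\partial_x-2u_x\partial_x^{-1}(u\partial_x)$ produces $\mathcal K'(Q_c)z=-4Q_cz\,\partial_x-2z_x\partial_x^{-1}(Q_c\partial_x)-2Q_c'\partial_x^{-1}(z\partial_x)$; applying this to $Q_c$ and then $\partial_x^{-1}$ on the left, and using $\partial_x^{-1}(Q_c Q_c')=\tfrac12 Q_c^2$ together with one integration by parts to convert $\partial_x^{-1}(z_x Q_c^2/\text{stuff})$-type expressions, should reorganize into $Q_c^2 z+2Q_c\partial_x^{-1}(Q_c' z)$ up to the scalar $(-c)^{n-1}$, which combined with a sign from the $\partial_x^{-1}$ gives the stated $(-1)^n c^{n-1}$.

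I expect the main obstacle to be the nonlocal algebra: keeping track of $\partial_x^{-1}$ acting on products, making sure all integrations by parts are legitimate (i.e. boundary terms vanish, which holds because $Q_c$ and its derivatives decay exponentially and $z\in H^n$), and correctly combining the two sources of the potential term $Q_c^2 z$ — one coming from the profile equation inside $\mathcal R(Q_c)$ versus $-\partial_x^2+c$, the other from $\mathcal K'(Q_c)z$ — so that the final coefficient is exactly $(-1)^n c^{n-1}$ and not off by a factor or sign. A clean way to organize this, and the route I would actually take in writing it up, is to check the identity first for $n=1$ by the explicit formula $L_{1,1}=-\partial_x^2+c-3Q_c^2$ from Lemma~\ref{lem:identities} (which pins down the normalization), and then run the differentiated-Lenard computation to get the general $n$; alternatively, since the statement is a pointwise operator identity in finitely many elementary terms, one may simply verify it by direct substitution using $\partial_x^{-1}$'s defining property and the profile equation, which is the "elementary calculations" route the paper favors elsewhere (cf. the remark after Lemma~\ref{lem:prop_M}).
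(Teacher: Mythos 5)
Your plan is essentially the paper's proof: linearize the Lenard recursion~\eqref{eq:recursion} around $Q_c$, note that the homogeneous part is governed by $\mathcal K(Q_c)=\partial_x\mathcal R(Q_c)$, reduce the inhomogeneous term via $H_n'(Q_c)=(-c)^{n-1}Q_c$, and write $\bigl(\mathcal K'(Q_c)z\bigr)Q_c=-\partial_x\bigl(Q_c^2z+2Q_c\partial_x^{-1}(Q_c'z)\bigr)$ before cancelling the outer $\partial_x$. One small correction: the identity $\mathcal K(Q_c)=\partial_x\mathcal R(Q_c)$ is a pure Leibniz-rule computation and does not use the profile equation~\eqref{eq:stationary} at all, so the "modulo the constant $c$" caveat in your first step is spurious; the profile equation enters only through the variational principle~\eqref{eq:1-sol variaprinciple} in the inhomogeneous term.
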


\begin{proof}
  The strategy of the proof is to linearize the recursion identity~\eqref{eq:recursion} around $Q_c$.
  Let $n\in\mathbb N$, $n\geq 1$, and $z\in H^n(\R)$. We have by  differentiation of~\eqref{eq:recursion} around $Q_c$ at $z$ the following identity:
 \begin{equation*}
   \partial_x\left(H''_{n+1}(Q_c)z\right)=\mathcal{K}(Q_c)(H''_{n}(Q_c)z)+\left(\mathcal{K}'(Q_c)z\right)H_n'(Q_c),
 \end{equation*}
    where
 \[
 \mathcal{K}'(Q_c)z=-4Q_cz\partial_x-2z_x\partial_x^{-1}(Q_c\partial_x)-2(Q_c)_x\partial_x^{-1}(z\partial_x).
 \]
  Observe that the operator $\mathcal K(Q_c)$ might be rewritten in the following way
 \begin{equation*}
\mathcal{K}(Q_c)=-\partial_x^3-2Q_c^2\partial_x-2(Q_c)_x\partial_x^{-1}(Q_c\partial_x)=\partial_x\left( -\partial_x^2-2Q_c\partial_x^{-1}(Q_c\partial_x) \right)=\partial_x\mathcal R(Q_c)
\end{equation*}
  From the variational principle~\eqref{eq:1-sol variaprinciple} satisfied by the $1$-soliton profile $Q_c$, we have
 \[
    H_n'(Q_c)=(-c)^{n-1}H_1'(Q_c)=(-c)^{n-1}Q_c,
 \]
  hence
 \[
 \left(\mathcal{K}'(Q_c)z\right)H_n'(Q_c)=(-c)^{n-1} \left(\mathcal{K}'(Q_c)z\right)Q_c.
 \]
Moreover, we have
 \begin{multline*}
 \left(\mathcal{K}'(Q_c)z\right)Q_c=-4Q_c(Q_c)_xz-2z_x\partial_x^{-1}(Q_c(Q_c)_x)-2(Q_c)_x\partial_x^{-1}(z(Q_c)_x)\\
    =-\left(2Q_c(Q_c)_xz+Q_c^2z_x\right)-2\left(Q_c(Q_c)_xz+(Q_c)_x\partial_x^{-1}(z(Q_c)_x)\right) 
                                                                                                        =-\partial_x\left(\left(Q_c^2z\right)+2\left(Q_c\partial_x^{-1}(z(Q_c)_x)\right)\right). 
\end{multline*}
Combining the previous identities and removing the  $\partial_x$ give the desired recursion identity and conclude the proof.
\end{proof}

\begin{lemma}
 \label{lem:factor}
  Fix $j=1,\dots,N$.
  The operator $L_{N,j}$ can be factorized in the following way:
 \begin{equation}
L_{N,j}=\left(\prod_{k=1,k\neq j}^{N}(\mathcal R(Q_{c_j})+c_k)\right)(H_2''(Q_{c_j})+c_j H_1''(Q_{c_j})).\label{eq:7}
\end{equation}
\end{lemma}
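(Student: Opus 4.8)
The plan is to derive the factorization~\eqref{eq:7} of $L_{N,j}$ directly from the definition of $S_N$ together with the linearized recursion identity of Lemma~\ref{lem:6.1.}. Recall that by~\eqref{eq:def-S_N} we have $S_N = H_{N+1} + \sum_{i=1}^N \lambda_i H_i$, so that $L_{N,j} = S_N''(Q_{c_j}) = H_{N+1}''(Q_{c_j}) + \sum_{i=1}^N \lambda_i H_i''(Q_{c_j})$. The key observation is that, when all of these Hessians are evaluated at the \emph{same} soliton profile $Q_{c_j}$, Lemma~\ref{lem:6.1.} expresses $H_{n+1}''(Q_{c_j})$ in terms of $H_n''(Q_{c_j})$ via the single recursion operator $\mathcal R(Q_{c_j})$ plus a ``remainder'' term $(-1)^n c_j^{n-1}\big(Q_{c_j}^2 z + 2 Q_{c_j}\partial_x^{-1}(Q_{c_j}' z)\big)$ that is itself independent of $n$ up to the scalar factor $(-1)^n c_j^{n-1}$. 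Writing $\mathcal R_j := \mathcal R(Q_{c_j})$ and $P z := Q_{c_j}^2 z + 2 Q_{c_j}\partial_x^{-1}(Q_{c_j}' z)$, Lemma~\ref{lem:6.1.} reads $H_{n+1}''(Q_{c_j}) = \mathcal R_j H_n''(Q_{c_j}) + (-1)^n c_j^{n-1} P$ for every $n$.

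First I would iterate this identity to get a closed formula for $H_{n+1}''(Q_{c_j})$ in terms of the ``base'' operator $H_1''(Q_{c_j})$ (or, more conveniently, in terms of $H_2''(Q_{c_j}) + c_j H_1''(Q_{c_j})$, which is the object appearing on the right of~\eqref{eq:7} and which by Lemma~\ref{lem:identities} equals $L_{1,1}$ with $c_1$ replaced by $c_j$). Telescoping gives
\[
H_{N+1}''(Q_{c_j}) = \mathcal R_j^{N-1}\big(H_2''(Q_{c_j})\big) + \sum_{k=1}^{N-1} (-1)^{?}\, c_j^{?}\, \mathcal R_j^{N-1-k} P,
\]
and similarly for each $H_i''(Q_{c_j})$; I would organize the bookkeeping so that all the $P$-contributions from $H_{N+1}''$ and from $\sum_i \lambda_i H_i''$ are collected together. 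The crucial point is then a purely algebraic cancellation: because $1, \lambda_N, \dots, \lambda_1$ are the coefficients of the polynomial $X \mapsto \prod_{k=1}^N (X + c_k)$ (by Vieta's formula~\eqref{eq:vieta}), and because $-c_j$ is one of its roots, the scalar coefficients multiplying the various powers $\mathcal R_j^m P$ must sum, after substitution $X \rightsquigarrow -c_j$ in the appropriate sense, to a multiple of $\prod_{k\neq j}(-c_j + c_k) \cdot (\text{root factor at } -c_j)$ — which vanishes. More precisely, I expect the coefficient of each $\mathcal R_j^m P$ to be the value at $-c_j$ of a suitable truncation of the polynomial, and the full sum of $P$-terms to reorganize as $\big(\prod_{k\neq j}(\mathcal R_j + c_k)\big)$ applied to something, with the remaining piece being exactly the evaluation of $\prod_k(-c_j + c_k) = 0$. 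After this cancellation, what survives is precisely $\big(\prod_{k=1,k\neq j}^N (\mathcal R_j + c_k)\big)\big(H_2''(Q_{c_j}) + c_j H_1''(Q_{c_j})\big)$, which is~\eqref{eq:7}.

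An alternative, and perhaps cleaner, route I would seriously consider: rather than expanding everything in powers of $\mathcal R_j$, work formally with the generating identity. Lemma~\ref{lem:6.1.} says $(\,\cdot\, - \mathcal R_j) H_n''(Q_{c_j})$ type relations hold; one can package $S_N''(Q_{c_j}) = p(\mathcal R_j)\big(H_2'' + c_j H_1''\big) + (\text{$P$-terms})$ where $p(X) = \prod_{k\neq j}(X + c_k)$, and show the $P$-terms assemble into $q(-c_j)\cdot(\text{something})$ with $q$ a polynomial having $-c_j$ as a root, hence vanish. Here I would lean on the variational principle~\eqref{eq:1-sol variaprinciple}, $H_{n+1}'(Q_{c_j}) + c_j H_n'(Q_{c_j}) = 0$, which is the ``scalar shadow'' of the operator recursion and which forces exactly the root condition at $-c_j$.

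The main obstacle I anticipate is the combinatorial/algebraic bookkeeping in the cancellation step: keeping track of the signs $(-1)^n$, the powers $c_j^{n-1}$, and matching them against the elementary symmetric polynomials $\lambda_i$ so as to recognize the vanishing sum as a polynomial evaluated at the root $-c_j$. The operator-theoretic content is light — everything is formal manipulation of $\mathcal R_j$, $P$, and the scalar coefficients — but getting the indices and signs to line up cleanly, and being careful that $\partial_x^{-1}$ and the operators $\mathcal R_j$, $P$ are applied to $z \in H^N(\R)$ in a well-defined way (so that all the formal identities from Lemma~\ref{lem:6.1.} are legitimate on the stated domain), is where the real work lies. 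Once the $P$-terms are shown to cancel, the identification with the right-hand side of~\eqref{eq:7} is immediate.
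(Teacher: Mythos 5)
Your proposal is correct and follows essentially the same route as the paper: apply the linearized recursion identity of Lemma~\ref{lem:6.1.} and use the fact that $-c_j$ is a root of the polynomial with coefficients $1,\lambda_N,\dots,\lambda_1$ (Vieta) to make the remainder ($P$-)terms vanish. The paper tames exactly the bookkeeping you worry about by a finite induction that peels off one factor $(\mathcal R(Q_{c_j})+c_k)$ at a time, at each step applying the recursion only to the reduced operator $\tilde L_{N-1,j}$ whose coefficients are those of the polynomial with the root $-c_k$ removed (which still has $-c_j$ as a root since $k\neq j$), so the cancellation happens in a single stroke per step rather than in one global sum.
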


\begin{proof}
The proof proceeds by finite induction. Let $k=1,\dots,N$, $k\neq j$. We have

 \[
 L_{N,j}=H_{N+1}''(Q_{c_j})+\sum_{l=1}^{N-1} \tilde \lambda_l H''_{l+1}(Q_{c_j})+c_k\tilde L_{N-1,j},
\]
where $\tilde \lambda_l$ is obtained from $\lambda_l$ by removing all terms containing $c_k$ and
\[
\tilde L_{N-1,j}:=\tilde S_{N-1}''(Q_{c_j}):=H_{N-1}''(Q_{c_j})+\sum_{l=1}^{N-1}\tilde\lambda_l H''_l(Q_{c_j}).
\]
Writing more explicitly the coefficients $\tilde \lambda_l$:
\[
\tilde\lambda_1=c_1+\cdots+c_{k-1}+c_{k+1}+\cdots+c_N,\quad \dots,\quad \tilde\lambda_{N-1}=c_1\cdots c_{k-1}c_{k+1}\cdots c_N,
\]
we observe that $(1,\tilde\lambda_1,\dots,\tilde\lambda_{N-1})$ is the family of coefficients of the polynomial with roots $-c_1,\dots, -c_{k-1},-c_{k+1},\dots,-c_N$.  
We now use the recursion formula~\eqref{eq:recursion Hamiltonian} to obtain
 \begin{multline*}
    H_{N+1}''(Q_{c_j})+\sum_{l=1}^{N-1} \tilde \lambda_l H''_{l+1}(Q_{c_j})
 \\
    =\mathcal R(Q_{c_j})\left(H_{N}''(Q_{c_j})+\sum_{l=1}^{N-1} \tilde \lambda_l H''_{l}(Q_{c_j})\right)-\left((-c_j)^{N-1}+\sum_{l=1}^{N-1}(-c_j)^{l-1}\tilde\lambda_l\right)(Q_{c_j}^2+2Q_{c_j}\partial_x^{-1}((Q_{c_j})_x\cdot)
 \\
    =\mathcal R(Q_{c_j})\tilde L_{N-1,j},
 \end{multline*}
  where we have used the fact that $-c_j$ is a root of  the polynomial of coefficients $1,\tilde\lambda_1,\dots,\tilde\lambda_{N-1}$ (recall that $j\neq k$). Gathering the previous calculations, we obtain the following formula:
 \[
 L_{N,j}=(\mathcal R(Q_{c_j}) +c_k)\tilde L_{N-1,j}.
\]
Iterating the process for any $k=1,\dots,N$, $k\neq j$, we obtain the desired formula~\eqref{eq:7}.
\end{proof}

With Lemmas~\ref{lem:identities},~\ref{lem:6.1.} and~\ref{lem:factor} in hand, we may now proceed to the proof of Proposition~\ref{prop:factorization_L_N_j}.

\begin{proof}[Proof of Proposition~\ref{prop:factorization_L_N_j}]
Using successively~\eqref{eq:7},~\eqref{Mt7} (first equation),~\eqref{Mt1} and~\eqref{Mt7} (second equation) we have
 \begin{multline*}
    M L_{N,j}M^t 
      =\left(\prod_{k=1,k\neq j}^{N}(-\partial_x^2-Q_{c_j}^2+c_k)\right)M(H_2''(Q_{c_j})+c_j H_1''(Q_{c_j}))M^t \\
        =\left(\prod_{k=1,k\neq j}^{N}(-\partial_x^2-Q_{c_j}^2+c_k)\right)M^t(-\partial_x^2+c_j)M 
        =M^t\left(\prod_{k=1}^{N}(-\partial_x^2+c_k)\right)M.
 \end{multline*}
      This concludes the proof.
 \end{proof}

\begin{lemma}
 \label{lem:7.3.}
  For $j=1,\dots,N$, the operator $L_{N,j}$ verifies the following properties.
 \begin{itemize}
 \item The essential spectrum of $L_{N,j}$ is $[c_1\cdots c_N,\infty)$. 
 \item If there does not exist $k$ such that $c_k=c_j$, then we have the following. 
 \begin{itemize}
 \item  The operator $L_{N,j}$ has zero as a simple eigenvalue with eigenvector $(Q_{c_j})_x$. 
 \item If $j$ is odd, then $L_{N,j}$ has exactly one negative eigenvalue.
 \item If $j$ is even, then $L_{N,j}$ has no negative eigenvalue.
 \end{itemize}
 \item If there exists $k$ such that $c_k=c_j$, then the operator $L_{N,j}$ has zero as a double eigenvalue with eigenvectors $(Q_{c_j})_x$ and $\Lambda Q_{c_j}$ (see~\eqref{eq:def-Lambda}) and the rest of the spectrum is positive.
 \end{itemize}
\end{lemma}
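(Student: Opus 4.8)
The plan is to read the essential spectrum off the factorization of Lemma~\ref{lem:factor}, to extract nonnegativity of $L_{N,j}$ on the hyperplane $Q_{c_j}^\perp$ from Proposition~\ref{prop:factorization_L_N_j}, and then to reduce the determination of $\inertia(L_{N,j})$ to the sign of one scalar computed by differentiating the critical point equation $S_N'(Q_{c_j})=0$ in the speed $c_j$. For the essential spectrum: $L_{N,j}=S_N''(Q_{c_j})$ is a self-adjoint ordinary differential operator of order $2N$ (being a linear combination of the local Hessians $H_1'',\dots,H_{N+1}''$), whose coefficients, by Lemma~\ref{lem:factor}, are polynomials in $Q_{c_j}$ and its derivatives together with constants depending on $c_1,\dots,c_N$. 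Since $Q_{c_j}$ and all its derivatives decay exponentially, $L_{N,j}-\prod_{k=1}^{N}(-\partial_x^2+c_k)$ is a differential operator of order at most $2N-1$ with exponentially decaying coefficients, hence relatively compact with respect to $\prod_{k=1}^{N}(-\partial_x^2+c_k)$; Weyl's theorem then gives $\sigma_{\mathrm{ess}}(L_{N,j})=\sigma_{\mathrm{ess}}\big(\prod_{k=1}^{N}(-\partial_x^2+c_k)\big)=[c_1\cdots c_N,\infty)$. In particular $0$ lies below the essential spectrum and the inertia is well defined in the sense of Definition~\ref{def:inertia}.

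Next I use Proposition~\ref{prop:factorization_L_N_j}. By Lemma~\ref{lem:prop_M} one has $M_jM_j^t=-\partial_x^2+c_j\geq c_j>0$, so $M_j^t$ is bounded below and is an isomorphism of $L^2(\R)$ onto its range $Q_{c_j}^\perp$ (and similarly on Sobolev spaces). For $h=M_j^tk\in Q_{c_j}^\perp$, Proposition~\ref{prop:factorization_L_N_j} gives $\scalar{L_{N,j}h}{h}_{L^2}=\scalar{M_j^t\big(\prod_{k=1}^{N}(-\partial_x^2+c_k)\big)M_jk}{k}_{L^2}=\scalar{\big(\prod_{k=1}^{N}(-\partial_x^2+c_k)\big)M_jk}{M_jk}_{L^2}\geq c_1\cdots c_N\norm{M_jk}_{L^2}^2\geq0$, with equality if and only if $M_jk=0$, that is $h\in\Span(M_j^tQ_{c_j})=\Span\big((Q_{c_j})_x\big)$ by~\eqref{Mt8}; and on $W:=Q_{c_j}^\perp\cap\big((Q_{c_j})_x\big)^\perp$ the form is coercive, because $M_j$ is bounded below on the corresponding subspace of $L^2$, which is transverse to $\ker M_j=\Span(Q_{c_j})$. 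Thus the quadratic form of $L_{N,j}$ is nonnegative on the codimension-one subspace $Q_{c_j}^\perp$, so $n(L_{N,j})\leq1$ in all cases; moreover $(Q_{c_j})_x\in\ker L_{N,j}$ directly from Lemma~\ref{lem:factor} together with $L_{1,j}(Q_{c_j})_x=0$.

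Now decompose $L^2(\R)=\Span(Q_{c_j})\oplus\Span\big((Q_{c_j})_x\big)\oplus W$. Since $L_{N,j}(Q_{c_j})_x=0$ and $Q_{c_j}\perp(Q_{c_j})_x$, the $(Q_{c_j})_x$-line decouples with a zero on the diagonal, while $L_{N,j}$ is positive and coercive on $W$; an elementary finite-rank Schur-complement (``index counting'') argument then shows that $\inertia(L_{N,j})$ equals $(1,1)$ if $s<0$, $(0,1)$ if $s>0$, and $(0,2)$ if $s=0$, where $s:=\scalar{L_{N,j}Q_{c_j}}{Q_{c_j}}_{L^2}-\scalar{(L_{N,j}|_W)^{-1}P_WL_{N,j}Q_{c_j}}{P_WL_{N,j}Q_{c_j}}_{L^2}$, and that furthermore $\operatorname{sign}(s)=\operatorname{sign}\scalar{L_{N,j}^{-1}Q_{c_j}}{Q_{c_j}}_{L^2}$ whenever $s\neq0$, the inverse being taken on $(\ker L_{N,j})^\perp$. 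To evaluate this, differentiate $S_N'(Q_{c_j})=0$ with respect to $c_j$: writing $S_N=H_{N+1}+\sum_{i=1}^{N}\lambda_i(\mathbf c)H_i$ as in~\eqref{eq:def-S_N}, using $\partial_{c_j}\big(\sum_i\lambda_i\mu^{i-1}\big)=\prod_{k\neq j}(\mu+c_k)$ (from~\eqref{eq:vieta}) and $H_i'(Q_{c_j})=(-c_j)^{i-1}Q_{c_j}$, one obtains $L_{N,j}\big(\partial_{c_j}Q_{c_j}\big)=-\alpha_j\,Q_{c_j}$ with $\alpha_j:=\prod_{k\neq j}(c_k-c_j)$.

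Finally, split into cases, using that $L_{N,j}$ commutes with $x\mapsto-x$ (as $Q_{c_j}$ is even), so $\ker L_{N,j}$ splits into even and odd parts and every odd kernel element lies in $\Span\big((Q_{c_j})_x\big)$ by the degeneracy analysis of the second paragraph. If $c_j$ has multiplicity one among $c_1,\dots,c_N$, then $\alpha_j\neq0$ with $\operatorname{sign}(\alpha_j)=(-1)^{j-1}$; an even kernel element $\phi_e$ would satisfy $0=\scalar{L_{N,j}\partial_{c_j}Q_{c_j}}{\phi_e}_{L^2}=-\alpha_j\scalar{Q_{c_j}}{\phi_e}_{L^2}$, forcing $\phi_e\in Q_{c_j}^\perp$ and hence $\phi_e\in\Span\big((Q_{c_j})_x\big)$, so $\phi_e=0$; thus $\ker L_{N,j}=\Span\big((Q_{c_j})_x\big)$, $s\neq0$, $L_{N,j}^{-1}Q_{c_j}=-\alpha_j^{-1}\partial_{c_j}Q_{c_j}$ on $(\ker L_{N,j})^\perp$, and $\operatorname{sign}(s)=\operatorname{sign}\big(-\alpha_j^{-1}\scalar{\partial_{c_j}Q_{c_j}}{Q_{c_j}}_{L^2}\big)=(-1)^{j}$ since $\scalar{\partial_{c_j}Q_{c_j}}{Q_{c_j}}_{L^2}=\tfrac12\partial_{c_j}\norm{Q_{c_j}}_{L^2}^2>0$ (computed from~\eqref{eq:Q}); this yields exactly one negative eigenvalue when $j$ is odd and none when $j$ is even. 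If instead $c_j=c_k$ for some $k\neq j$, then $\alpha_j=0$, so $\partial_{c_j}Q_{c_j}\in\ker L_{N,j}$; it is even with $\scalar{\partial_{c_j}Q_{c_j}}{Q_{c_j}}_{L^2}\neq0$, hence independent of $(Q_{c_j})_x$, which forces $s=0$, $\ker L_{N,j}=\Span\big((Q_{c_j})_x,\partial_{c_j}Q_{c_j}\big)$ (so that $\Lambda Q_{c_j}$ of~\eqref{eq:def-Lambda} is recovered modulo the known kernel directions) and $n(L_{N,j})=0$; combined with $\sigma_{\mathrm{ess}}(L_{N,j})=[c_1\cdots c_N,\infty)$ this shows the rest of the spectrum is positive. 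The main obstacle is to make the Schur-complement reduction fully rigorous for the unbounded operator $L_{N,j}$ — the form domain of the compression $L_{N,j}|_W$, invertibility and closedness of the relevant ranges, and the passage to the sign identity; once this index-counting step is in place, both the parity dichotomy and the degenerate case follow from the single relation $L_{N,j}\partial_{c_j}Q_{c_j}=-\alpha_jQ_{c_j}$.
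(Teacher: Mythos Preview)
Your proof is correct and shares the key ingredients with the paper: the essential spectrum via Weyl's theorem, nonnegativity on $Q_{c_j}^\perp$ via Proposition~\ref{prop:factorization_L_N_j}, and the crucial relation $L_{N,j}\Lambda Q_{c_j}=-\alpha_j Q_{c_j}$ with $\alpha_j=\prod_{k\neq j}(c_k-c_j)$ obtained by differentiating $S_N'(Q_{c_j})=0$ in $c_j$. The difference lies in how the inertia is extracted from these ingredients.

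The paper bypasses the Schur complement entirely by an explicit diagonalizing change of variables. For even $z$, instead of the orthogonal splitting $z=aQ_{c_j}+M_j^tg$, it uses the identity $M_j^t(xQ_{c_j})=-Q_{c_j}-2x(Q_{c_j})_x$ from~\eqref{eq:MtxQ} to rewrite $z=4ac_j\Lambda Q_{c_j}+M_j^tk$. The cross term then vanishes identically because $\scalar{L_{N,j}\Lambda Q_{c_j}}{M_j^tk}=-\alpha_j\scalar{M_jQ_{c_j}}{k}=0$, and the quadratic form becomes
\[
\scalar{L_{N,j}z}{z}=-16a^2c_j^{3/2}\alpha_j+\Big\langle \prod_{k=1}^N(-\partial_x^2+c_k)M_jk,\,M_jk\Big\rangle,
\]
from which the parity dichotomy and the degenerate case are read off at once. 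Your route is the Vakhitov--Kolokolov/GSS style argument: reduce to the sign of $\scalar{L_{N,j}^{-1}Q_{c_j}}{Q_{c_j}}=-\alpha_j^{-1}\scalar{\Lambda Q_{c_j}}{Q_{c_j}}$ via a Schur complement. This is more systematic and transfers readily to other settings, but, as you note yourself, it carries the burden of justifying coercivity of $L_{N,j}|_W$ in the form sense and the Schur identity for an unbounded operator. The paper's trick of substituting $\Lambda Q_{c_j}$ rather than $Q_{c_j}$ as the ``bad'' direction buys exactly the cancellation that makes the whole computation elementary and avoids those analytic subtleties.
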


\begin{remark}
 \label{rmk:7.4.}
  As a particular case of Lemma~\ref{lem:7.3.}, we obtain  the spectrum of the linearized operator $L_{N,j}$ around the $1$-soliton with profile $Q_{c_j}$. This information might be used to obtain the nonlinear stability of $1$-solitons of~\eqref{eq:mkdv} (see e.g.~\cite{BoLiNg04}).
\end{remark}

\begin{proof}[Proof of Lemma~\ref{lem:7.3.}]
  Since $Q_{c_j}$ is smooth and exponentially decaying, the operator $L_{N,j}$ is a compact perturbation of
 \[
 \prod_{k=1}^{N}(-\partial_x^2+c_k).
 \]
From Weyl's Theorem, they share the same essential spectrum , which is $[c_1\cdots c_N,\infty)$.

Given $c>0$, introduce the scaling derivative $\Lambda Q_c$, given by
 \begin{equation}
 \label{eq:def-Lambda}
\Lambda Q_c:=\frac{\rmd Q_{\tilde c}}{\rmd \tilde c}_{|\tilde c=c}=\frac{1}{2c}(Q_c+x(Q_c)_x).
\end{equation}

By construction, each soliton profile $Q_{c_j}$  verifies the variational principle~\eqref{eq:vari-princ}, i.e. $S_N'(Q_{c_j})=0$. Differentiating with respect to $x$ and $c_j$ readily gives
\[
  L_{N,j}(\partial_xQ_{c_j})=0,
\]
Using $H_k'(Q_{c_j})=(-c_j)^{k-1}H_1'(Q_{c_j})=(-c_j)^{k-1}Q_{c_j}$, we have
\[
  L_{N,j}\Lambda Q_{c_j}=-\sum_{k=1}^N\frac{\partial\lambda_k}{\partial c_j}H_k'(Q_{c_j})=-\sum_{k=1}^N\frac{\partial\lambda_k}{\partial c_j}(-c_j)^{k-1}Q_{c_j}=-\left(\prod_{k=1,k\neq j}^N(c_k-c_j)\right)Q_{c_j}.
\]
Observe that if there is any $k$ such that $c_k=c_j$, then $\Lambda Q_{c_j}\in\ker(L_{N,j})$.

These preliminary observations being made, we now proceed to the proof. 

Any $z\in H^{N}(\R)$ might be decomposed orthogonally as
\[
z=a Q_{c_j}+M_j^tg
\]
for $a\in\R$ and $g\in H^{N+1}(\R)$.

The operator $L_{N,j}$ preserves the symmetry (i.e. if $z$ is even, then $L_{N,j}z$ is also even), hence it is natural
to distinguish between two cases : $z$ odd or $z$ even. 

We first treat the case where $z$ is odd. In this case, $a=0$ and (see Lemma~\ref{lem:3.2.}) $g$ is even.
 We have
\begin{multline*}
 \left\langle L_{N,j}z,z\right\rangle
  =\left\langle L_{N,j}M_j^tg,M_j^tg\right\rangle
  =\left\langle M_jL_{N,j}M_j^tg,g\right\rangle\\
  =\left\langle M_j^t \prod_{k=1}^{N}(-\partial_x^2+c_k)
 M_jg,g\right\rangle
    =\left\langle \prod_{k=1}^{N}(-\partial_x^2+c_k)
 M_jg,M_jg\right\rangle.
\end{multline*}
In particular, $ \left\langle L_{N,j}z,z\right\rangle>0$, unless $M_jg=0$, i.e. $g$ is a multiple of $Q_{c_j}$. Since $M_j^tQ_{c_j}=-2(Q_{c_j})_x$  (see~\eqref{Mt8}), and $L_{N,j}(\partial_xQ_{c_j})=0$, this implies that $0$ a simple (for odd functions) eigenvalue of $L_{N,j}$, with associated eigenvector $(\partial_x Q_{c_j})$.

We then treat the case where $z$ is even. In this case, we may have $a\neq0$ and  (see Lemma~\ref{lem:3.2.}) $g$ is odd. Recall from~\eqref{eq:MtxQ} that
\(
M_j^t(xQ_{c_j})=-Q_{c_j}-2x(Q_{c_j})_x.
\)
Therefore, we may rewrite $z$ as
\[
z=4ac_j\Lambda Q_{c_j}+M_j^tk,\quad k=axQ_{c_j}+g.
\]
This gives
\begin{multline}
 \label{eq:8}
 \left\langle L_{N,j}z,z\right\rangle
  =16a^2c_j^2\left\langle L_{N,j}\Lambda Q_{c_j},\Lambda Q_{c_j}\right\rangle+8ac_j\left\langle L_{N,j}\Lambda Q_{c_j},M_j^tk\right\rangle+\left\langle L_{N,j}M_j^tk,M_j^tk\right\rangle
 \\
  =-16a^2c_j^2 \left(\prod_{k=1,k\neq j}^N(c_k-c_j)\right)\left\langle Q_{c_j},\Lambda Q_{c_j}\right\rangle
  -8ac_j \left(\prod_{k=1,k\neq j}^N(c_k-c_j)\right)\left\langle Q_{c_j},M_j^tk\right\rangle
  +\left\langle L_{N,j}M_j^tk,M_j^tk\right\rangle
 \\
  =-16a^2c_j^{\frac32} \left(\prod_{k=1,k\neq j}^N(c_k-c_j)\right)+\left\langle \prod_{k=1}^{N}(-\partial_x^2+c_k)
 M_jk,M_jk\right\rangle,
\end{multline}
where we have used
\begin{equation*}
\left\langle Q_{c_j},\Lambda Q_{c_j}\right\rangle=\frac12\frac{\rmd }{\rmd c}_{|c=c_j}\norm{Q_c}_{L^2}^2=\frac{\rmd }{\rmd c}_{|c=c_j} H_1(Q_c)=\frac{\rmd }{\rmd c}_{|c=c_j} (2\sqrt{c})=\frac{1}{\sqrt{c_j}}.
\end{equation*}

To proceed further, we distinguish between two cases. First, we assume that if $k\neq j$, then $c_k\neq c_j$.

When $j$ is even, since we have $0<c_1<\cdots<c_N$,~\eqref{eq:8} implies that $ \left\langle L_{N,j}z,z\right\rangle>0$ unless $a=0$ and $M_j^tk=0$, i.e. $z=0$.

When $j$ is odd,~\eqref{eq:8}  implies that $ \left\langle L_{N,j}z,z\right\rangle>0$ on the hyperplane $\{a=0\}$, hence $L_{N,j}$ can have at most one nonnegative eigenvalue. Using $\Lambda Q_{c_j}$ as a test function, we have
\[
 \left\langle L_{N,j}\Lambda Q_{c_j}, \Lambda Q_{c_j}\right\rangle=
  -c_j^{-\frac12} \left(\prod_{k=1,k\neq j}^N(c_k-c_j)\right)<0,
\]
which implies the existence of a negative eigenvalue.

Finally, assume that there exists $k\neq j$ such that $c_k=c_j$. In this case,~\eqref{eq:8} implies that $ \left\langle L_{N,j}z,z\right\rangle>0$ unless $M_j^tk=0$, i.e. $z=4ac_j\Lambda Q_{c_j}$, which makes $\Lambda Q_{c_j}$ the unique possible direction for the $0$ eigenvalue. This concludes the proof.
\end{proof}

\section{Stability of Multi-Solitons}\label{stasec7}

This section is devoted to the proof of Theorem~\ref{thm:stability}. To this aim, we will show that multi-solitons of~\eqref{eq:mkdv} verify a stability criterion established by Maddocks and Sachs~\cite{MaSa93}. Before stating the stability criterion, we introduce some notation. Recall that a $N$-soliton solution $U^{(N)}(t,x)\equiv U^{(N)}(t,x;{\mathbf{c}},{\mathbf{x}})$ defined in~\eqref{eq:def-multi-solitons} is a critical point of an associated action functional $S_N$ defined in~\eqref{eq:def-S_N}.

In general, the $N$-soliton $U^{(N)}$ is not a minimum of $S_N$. At best, it is a constrained (and non-isolated) minimizer of the following variational problem
\begin{equation*}
\min H_{N+1}(u) \quad\quad \text{subject to} \quad  H_{j}(u)= H_{j}(U^{(N)}), \quad j=1,2,...,N.
\end{equation*}

We define the self-adjoint operator
\[
 \mathcal L_N:=S_N''(U^{(N)})
\]
and denote by
\[
  n(\mathcal L_N)
\]
  the number of negative eigenvalues of $\mathcal L_N$. Observe that the above defined objects are a priori time-dependent. We also define a $N\times N$ Hessian matrix by
\[
D:=\left\{\frac{\partial^2S_N(U^{(N)})}{\partial \lambda_i\partial \lambda_j}\right\},
\]
and denote by
\[
  p(D)
\]
  the number of positive eigenvalues of $D$. Since $S_N$ is a
  conserved quantity for the flow of~\eqref{eq:mkdv}, the matrix $D$
  is independent of $t$.  The proof of Theorem~\ref{thm:stability} relies
  on the following theoretical result, which was obtained by Maddocks
  and Sachs~\cite[Lemma 2.3]{MaSa93}. 

\begin{proposition}\label{prop:7.1.}
Suppose that
\begin{equation}\label{n=p}
n(\mathcal L_N)=p(D).
\end{equation}
Then there exists  $C>0$ such that $U^{(N)}$ is a non-degenerate unconstrained
minimum of the augmented Lagrangian (Lyapunov function)
\begin{equation*}
S_N(u)+\frac{C}{2}\sum_{j=1}^N\left(H_j(u)-H_{j}(U^{(N)})\right)^2.
\end{equation*}
As a consequence, $U^{(N)}$ is nonlinearly stable. 
\end{proposition}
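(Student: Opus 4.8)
The plan is to follow the augmented-Lagrangian strategy of Maddocks and Sachs~\cite{MaSa93}. First I would record that, since $H_1,\dots,H_{N+1}$ are conserved along the $H^N$-flow of~\eqref{eq:mkdv} (Section~\ref{sec:preliminaries}), the functional
\[
\Phi_C(u):=S_N(u)+\frac{C}{2}\sum_{j=1}^N\left(H_j(u)-H_j(U^{(N)})\right)^2
\]
is a conserved quantity, and that $U^{(N)}$ is a critical point of $\Phi_C$ for every $C$: indeed $S_N'(U^{(N)})=0$ by Proposition~\ref{prop:vari-princ}, and each penalty term vanishes to second order at $U^{(N)}$. The whole matter thus reduces to proving that, for $C$ large, the Hessian $\Phi_C''(U^{(N)})$ is positive definite transversally to the tangent space of the symmetry orbit; the statement that $U^{(N)}$ is a non-degenerate unconstrained minimum of $\Phi_C$, and hence the orbital stability of Theorem~\ref{thm:stability}, then follow from the usual Lyapunov/continuity argument.

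Differentiating twice gives, with $\mathcal L_N=S_N''(U^{(N)})$,
\[
\dual{\Phi_C''(U^{(N)})v}{v}=\dual{\mathcal L_Nv}{v}+C\sum_{j=1}^N\scalar{H_j'(U^{(N)})}{v}_{L^2}^2 ,
\]
a finite-rank nonnegative perturbation of $\mathcal L_N$. Two structural facts feed into the analysis, both already available in the excerpt: (i) differentiating the variational principle $S_N'(U^{(N)}(\cdot;\mathbf x))=0$ in each phase $x_j$ shows $\partial_{x_j}U^{(N)}\in\ker\mathcal L_N$, and since these $N$ functions are linearly independent while $\dim\ker\mathcal L_N=\sum_j\dim\ker L_{N,j}=N$ by the inertia-preservation Corollary~\ref{cor:asymptotic-inertia} together with Lemma~\ref{lem:7.3.}, the kernel of $\mathcal L_N$ is exactly $\Span\{\partial_{x_j}U^{(N)}:j=1,\dots,N\}$; (ii) $H_j(U^{(N)})$ is phase-independent (let $t\to\infty$ and use the localization of the composing solitons), so $\scalar{H_j'(U^{(N)})}{\partial_{x_k}U^{(N)}}_{L^2}=\partial_{x_k}H_j(U^{(N)})=0$, i.e. $\ker\mathcal L_N$ is orthogonal to $\Span\{H_j'(U^{(N)})\}$. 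I then split $X=T\oplus Z$, with $T=\bigcap_{j}\ker\scalar{H_j'(U^{(N)})}{\cdot}_{L^2}$ the constraint tangent space and $Z=\Span\{\mathcal L_N^{-1}H_j'(U^{(N)})\}$ an $N$-dimensional transversal (the $H_j'(U^{(N)})$ being independent and orthogonal to $\ker\mathcal L_N$, the inverse being taken on $(\ker\mathcal L_N)^\perp$).

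The heart of the argument — and the step I expect to be the main obstacle — is the index count on $T$: the hypothesis $n(\mathcal L_N)=p(D)$ must be shown to force $\mathcal L_N|_T\ge\delta>0$ on $T\ominus\ker\mathcal L_N$ with no extra kernel. This is precisely the abstract Lemma~2.3 of~\cite{MaSa93}, whose mechanism is the identification of the constraint matrix $\big[\scalar{\mathcal L_N^{-1}H_i'(U^{(N)})}{H_j'(U^{(N)})}_{L^2}\big]_{i,j}$ with $-D$: differentiating $H_{N+1}'(U_\lambda)+\sum_k\lambda_kH_k'(U_\lambda)=0$ in $\lambda_j$ — where $U_\lambda$ is the critical point of $H_{N+1}+\sum_k\lambda_kH_k$, depending smoothly on $\lambda$ modulo translations by an implicit function theorem — yields $\partial_{\lambda_j}U_\lambda=-\mathcal L_N^{-1}H_j'(U^{(N)})$, whence $D_{ij}=\partial_{\lambda_j}H_i(U_\lambda)=-\scalar{\mathcal L_N^{-1}H_i'(U^{(N)})}{H_j'(U^{(N)})}_{L^2}$; the classical quadratic-form restriction lemma then gives $n(\mathcal L_N|_T)=n(\mathcal L_N)-p(D)=0$ and $z(\mathcal L_N|_T)=z(\mathcal L_N)$ once $D$ is nondegenerate. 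The delicate points are exactly the smooth $\lambda$-dependence of $U_\lambda$ in the presence of the continuous symmetry, the nondegeneracy $\det D\neq0$, and the inversion of $\mathcal L_N$ on $(\ker\mathcal L_N)^\perp$. Granting this, and using that the essential spectrum of $\mathcal L_N$ is $[c_1\cdots c_N,\infty)$ (compact perturbation plus Weyl's theorem, as in Lemma~\ref{lem:7.3.}), $\mathcal L_N|_T$ has a positive spectral gap above its kernel. Writing $v=v_0+v_1+v_Z$ with $v_0\in\ker\mathcal L_N$, $v_1\in T\ominus\ker\mathcal L_N$, $v_Z\in Z$, and noting $\scalar{H_j'(U^{(N)})}{v}_{L^2}=\scalar{H_j'(U^{(N)})}{v_Z}_{L^2}$, one gets $\dual{\mathcal L_Nv}{v}=\dual{\mathcal L_Nv_1}{v_1}+2\dual{\mathcal L_Nv_1}{v_Z}+\dual{\mathcal L_Nv_Z}{v_Z}$ with $\dual{\mathcal L_Nv_1}{v_1}\ge\delta\norm{v_1}^2$; on the finite-dimensional $Z$ the form $v_Z\mapsto\sum_j\scalar{H_j'(U^{(N)})}{v_Z}_{L^2}^2$ is positive definite, and for $C$ large it absorbs the bounded indefinite terms $\dual{\mathcal L_Nv_Z}{v_Z}$ and $2\dual{\mathcal L_Nv_1}{v_Z}$ via Young's inequality, leaving $\dual{\Phi_C''(U^{(N)})v}{v}\ge c\,\big(\norm{v_1}^2+\norm{v_Z}^2\big)\gtrsim\operatorname{dist}(v,\ker\mathcal L_N)^2$.

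Finally I would close in the standard way: Taylor-expanding $\Phi_C$ at $U^{(N)}$ and, for $u$ near the orbit $\mathcal M=\{U^{(N)}(0,\cdot;\mathbf y):\mathbf y\in\R^N\}$, modulating to the nearest point of $\mathcal M$ (which cancels the $\ker\mathcal L_N$ component), the coercivity above yields $\Phi_C(u)-\Phi_C(U^{(N)})\gtrsim\operatorname{dist}_{H^N}(u,\mathcal M)^2$ for $u$ in an $H^N$-neighbourhood of $\mathcal M$. Since $\Phi_C$ is conserved and $\Phi_C(u_0)-\Phi_C(U^{(N)})=O(\delta^2)$ whenever $\norm{u_0-U^{(N)}}_{H^N}<\delta$, a continuity/bootstrap argument based on the global well-posedness in $H^N$ recalled in Section~\ref{sec:preliminaries} keeps the solution within $\eps$ of $\mathcal M$ for all $t\in\R$, which is the conclusion of Theorem~\ref{thm:stability}.
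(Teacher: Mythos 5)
Your argument is correct, but note that the paper does not actually prove Proposition~\ref{prop:7.1.}: it is quoted directly from Maddocks and Sachs~\cite[Lemma 2.3]{MaSa93}, and what you have written is a faithful reconstruction of that cited proof --- augmented Lagrangian, identification of the constraint matrix $\big[\langle\mathcal L_N^{-1}H_i'(U^{(N)}),H_j'(U^{(N)})\rangle\big]$ with $-D$, index count on the constraint tangent space, and absorption of the transversal directions for large $C$. The delicate points you flag are supplied elsewhere in the paper: the nondegeneracy of $D$ follows from Lemma~\ref{lem:7.2.} (the matrix congruent to $D$ is diagonal with nonzero entries), and the identification $\dim\ker\mathcal L_N=N$ together with the spectral gap comes from Lemma~\ref{lem:inertia-L} via Corollary~\ref{cor:asymptotic-inertia} and Lemma~\ref{lem:7.3.}.
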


Hence, to complete the proof of Theorem~\ref{thm:stability}, it is sufficient to verify~\eqref{n=p}.

We start with the count of the number of positive eigenvalues of the Hessian matrix $D$.

\begin{lemma}
 \label{lem:7.2.}
  For all
finite values of the parameters ${\mathbf{c}},{\mathbf{x}}$ with $0<c_1 < \cdots<c_N$, we have
\[
  p(D)= \left\lfloor\frac{N+1}{2}\right\rfloor.
 \]
\end{lemma}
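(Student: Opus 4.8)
The plan is to compute the entries of the Hessian matrix $D$ explicitly in terms of the speeds $c_1,\dots,c_N$, and then to determine its signature by recognizing it as (a congruence transform of) a structured matrix whose inertia can be read off. First I would recall that by the chain rule, since $U^{(N)}$ is a critical point of $S_N$ and $S_N(u)=H_{N+1}(u)+\sum_k\lambda_k H_k(u)$ with the $\lambda_k$ depending on $\mathbf c$ through Vieta's formulas~\eqref{eq:vieta}, the mixed partials $\frac{\partial^2 S_N(U^{(N)})}{\partial\lambda_i\partial\lambda_j}$ reduce to first derivatives of the conserved quantities $H_k(U^{(N)})$ with respect to the Lagrange parameters, i.e. to $\frac{\partial H_j(U^{(N)})}{\partial\lambda_i}$ up to a change of variables between the $\lambda$'s and the $c$'s. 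The key point is that the values $H_j(U^{(N)})$ can be computed: because the $N$-soliton decomposes at infinity into decoupled $1$-solitons and the $H_j$ are conserved and (being polynomial in $u$ and its derivatives with no constant term) additive over well-separated bumps, we get $H_j(U^{(N)})=\sum_{l=1}^N H_j(Q_{c_l})$, and each $H_j(Q_{c_l})$ is given explicitly by~\eqref{eq:jHamliton}: $H_{j+1}(Q_{c})=(-1)^j\frac{2}{2j+1}c^{\frac{2j+1}{2}}$.

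Next I would pass from derivatives in $\lambda$ to derivatives in $c$. The map $\mathbf c\mapsto\boldsymbol\lambda$ is the elementary-symmetric-polynomial map (Vieta), whose Jacobian is a generalized Vandermonde-type matrix, nonsingular precisely when the $c_j$ are distinct — which is our hypothesis $0<c_1<\cdots<c_N$. Since the inertia of a symmetric matrix is invariant under congruence $D\mapsto J^T D J$ with $J$ invertible (Sylvester's law of inertia, here in the plain matrix form), $p(D)$ equals the number of positive eigenvalues of the matrix $\tilde D$ obtained by writing everything in the $c$-variables. Concretely $\tilde D$ has entries built from $\partial_{c_i}\partial_{c_j}\big(\sum_l H_{N+1}(Q_{c_l})+\sum_k\lambda_k\sum_l H_k(Q_{c_l})\big)$; using $S_N'(U^{(N)})=0$ one simplifies this to something like $\tilde D_{ij}=\delta_{ij}\,\partial_{c_i}\big(\text{stuff}\big)$ plus a rank-structured piece, or more cleanly to a matrix whose $(i,j)$ entry is a symmetric function of $c_i,c_j$ alone. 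From~\eqref{eq:jHamliton} the relevant one-variable quantities are powers $c^{\frac{2k-1}{2}}$, so after differentiation one lands on entries that are (products/powers of) $\sqrt{c_i}$ times a polynomial in $c_i c_j$ — the kind of Cauchy/Vandermonde-flavored matrix whose determinant factorizes as $\prod_{i<j}(c_i-c_j)^2$ times an explicit positive factor, and whose principal minors can be signed by induction.

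The inertia count itself I would then carry out by an inductive/continuity argument: the signature of $\tilde D$ is locally constant as long as $\det\tilde D\neq 0$, and by the factorization $\det\tilde D = (\text{positive}) \cdot \prod_{i<j}(c_i-c_j)^2$ it never vanishes on the open chamber $c_1<\cdots<c_N$; hence $p(D)$ is constant there and can be evaluated in a convenient limit (e.g. $c_j\to j$, or a near-degenerate configuration, or directly by computing the signs of the leading principal minors of the explicit entry formula). One finds $\lfloor (N+1)/2\rfloor$ positive eigenvalues, matching the alternating-sign pattern coming from the $(-1)^j$ in~\eqref{eq:jHamliton}. The main obstacle I anticipate is the bookkeeping in the second step: correctly organizing the change of variables from $\boldsymbol\lambda$ to $\mathbf c$ and producing a clean closed form for the entries of $\tilde D$, so that the determinant factorization and the resulting sign count are transparent rather than a brute-force computation. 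Once the entries are in a recognizably structured (Vandermonde/Cauchy-type) form, the signature follows from a standard principal-minor induction, so the real work is getting to that form.
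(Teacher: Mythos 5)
Your setup is the same as the paper's: reduce $\partial^2 S_N/\partial\lambda_i\partial\lambda_j$ to first derivatives of $H_j(U^{(N)})$ using $S_N'(U^{(N)})=0$, use additivity of the $H_j$ over the decoupled solitons together with~\eqref{eq:jHamliton}, pass from $\boldsymbol\lambda$ to $\mathbf c$ via the Vieta map, and invoke the matrix Sylvester law to reduce to a congruent matrix. But the actual inertia count --- the entire content of the lemma --- is not carried out. You defer it to ``a Vandermonde/Cauchy-type form'' whose ``principal minors can be signed by induction'' or to ``evaluation in a convenient limit,'' without producing the structured form or doing the evaluation; evaluating at $c_j=j$ still leaves you with the signature of an explicit dense $N\times N$ matrix to determine, which is the original problem. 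The missing idea is the factorization $D=AB$ with $A=(\partial c_j/\partial\lambda_i)$ and $B_{ij}=\partial H_j/\partial c_i=(-1)^jc_i^{(2j-1)/2}$, followed by the congruence $A^{-1}D(A^{-1})^t=B(A^{-1})^t$. The point is that this product is \emph{diagonal}: the rows of $B$ are, up to the factor $c_i^{-1/2}$, the power vectors $((-c_i)^1,\dots,(-c_i)^N)$, while the rows of $A^{-1}$ (read off from Vieta's formulas) are the coefficient vectors of the polynomials $\prod_{l\neq j}(x+c_l)$; the $(i,j)$ entry of $B(A^{-1})^t$ is therefore a polynomial evaluation at the root $-c_i$, which vanishes for $i\neq j$ and equals $(-1)^{N-1}c_j^{-1/2}\prod_{k\neq j}(c_j-c_k)$ for $i=j$. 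The sign pattern of these diagonal entries gives $\lfloor(N+1)/2\rfloor$ directly.

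One concrete claim you do make is wrong: the determinant of the congruent matrix is \emph{not} a positive factor times $\prod_{i<j}(c_i-c_j)^2$. From the diagonal form its sign is $(-1)^{N(N-1)/2}$ (already negative for $N=2$, where the two diagonal entries have opposite signs). This does not break a continuity argument for local constancy of the signature on the chamber $c_1<\cdots<c_N$, but it shows the ``alternating-sign pattern'' you gesture at has not actually been tracked, and without the diagonalization there is no computation behind the asserted value $\lfloor(N+1)/2\rfloor$.
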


\begin{proof}
  Let $t$ be fixed. For notational convenience, we omit the dependency in $t$ in the proof (as the result will be in any case independent of $t$). For any $1\leq i,j\leq N$, we have
\[
  D_{ij}
  =\frac{\partial^2S_N}{\partial \lambda_i \partial \lambda_j}
  =\sum_{k=1}^N \frac{\partial c_k}{\partial \lambda_i} \frac{\partial}{\partial c_k}\frac{\partial S_N}{\partial \lambda_j}
  =\sum_{k=1}^N \frac{\partial c_k}{\partial \lambda_i} \frac{\partial H_j}{\partial c_k},
\]
where we have used the fact that
\[
 \frac{\partial S_N}{\partial \lambda_j}=\dual{S_N'(U^{(N)})}{\frac{\partial U^{(N)}}{\partial \lambda_j}}+H_j(U^{(N)})=H_j(U^{(N)}).
\]
We observe that $D$ can be obtained as a product of two matrices:
\[
D=AB,\quad A=\left(\frac{\partial c_j}{\partial \lambda_i} \right),\quad B=\left(\frac{\partial H_j}{\partial c_i},\right)
 \]
The value of $H_j$ is explicitly known (see~\eqref{eq:jHamliton}) for each $Q_{c_j}$ composing the asymptotic form of the multi-soliton $U^{(N)}$. Therefore, we have 
 \[
\frac{\partial H_j(U^{(N)})}{\partial c_i}=(-1)^j\frac{2}{2j+1}\frac{\partial }{\partial c_i}\sum_{k=1}^N c_k^{\frac{2j+1}{2}}=(-1)^j c_i^{\frac{2j-1}{2}}.
\]
The value of $c_j$ in terms of  the coefficients $\lambda_k$ cannot be easily expressed. However, we may express $\lambda_k$ in terms of $c_j$ using Vieta's formula~\eqref{eq:vieta}. We therefore have an explicit expression for the inverse of $A$:
\[
A^{-1}=
\begin{pmatrix}
  1&c_2+c_3+\cdots+c_N&\cdots &c_2c_3\cdots c_N \\
  1&c_1+c_3+\cdots+c_N&\cdots&c_1c_3\cdots c_N\\
 \vdots&\vdots&& \vdots\\
1&  c_1+c_2+\cdots+c_{N-1}&\cdots&c_1c_2\cdots c_{N-1}
\end{pmatrix}.
 \]
  Observe that
 \[
A^{-1}D(A^{-1})^t=B(A^{-1})^t,
 \]
    and therefore, by Sylvester's law of inertia (see Proposition~\ref{prop:sylvester}), the number of positive eigenvalues of $D$ is the same as the number of positive eigenvalues for $B(A^{-1})^t$, which turns out to be very simple.  Indeed, the entries of the $j$-th column of $(A^{-1})^t$ are the coefficients of a polynomial whose roots are $-c_1,\dots,-c_{j-1},-c_{j+1},\dots,-c_N$ and the entries of the $i$-th line of $B$ can be rewritten as $(\sqrt{c_j})^{-1}(-c_j)^j$. Hence $B(A^{-1})^t$
is a diagonal matrix with diagonal entries given by
 \[
(-1)^{N-1}\frac{1}{\sqrt{c_j}}\prod_{k\neq j}(c_j-c_k).
 \]
      The number of positive entries is
 \[
\left\lfloor \frac{N+1}{2}\right\rfloor,
 \]
which is the desired result. 
 \end{proof}

Now we verify that $n(\mathcal L_N)$ is also equal to $\left\lfloor\frac{N+1}{2}\right\rfloor$. In fact we can go further and we prove the following. 
\begin{lemma}\label{lem:inertia-L}
  The operator $\mathcal L_N$ verifies
  \begin{equation*}
 \inertia(\mathcal L_N)=\left(n(\mathcal L_N),z(\mathcal L_N)\right)=\left(\left\lfloor\frac{N+1}{2}\right\rfloor,N\right).
 \end{equation*}
\end{lemma}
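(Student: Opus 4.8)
The plan is to combine the inertia-preservation machinery of Section~\ref{sec:spectral-theory} with the spectral information on the $1$-soliton linearized operators from Lemma~\ref{lem:7.3.}. First I would invoke the iso-inertial framework: the $N$-soliton $U^{(N)}(t)$ is a strong solution of \eqref{eq:mkdv}, it is a critical point of $S_N$ by Proposition~\ref{prop:vari-princ}, and so, taking $V=S_N$, $X=L^2(\R)$, $X_1=H^{N}(\R)$, $X_2=H^{N+1}(\R)$, $f(u)=\partial_x H_2'(u)$, $L(t)=\mathcal L_N(t)=S_N''(U^{(N)}(t))$, and $B(t)=-f'(U^{(N)}(t))$, one checks (H1)--(H5) exactly as in \cite{NeLo06,Lo03}. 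Proposition~\ref{prop:iso-inertia} then gives that $\inertia(\mathcal L_N(t))$ is independent of $t$, so it suffices to compute the inertia in the limit $t\to\infty$.

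Next I would set up the "iso-inertia at infinity" framework of Section~\ref{sec:iso-inertia} to identify this limit. Write $U^{(N)}(t)=\sum_{j=1}^N Q_{c_j}(\cdot-x_j(t))+O(e^{-c_*|t|})$ and choose a sequence $t_n\to\infty$. Let $\tau_n^j$ be the translation isometry by $x_j(t_n)$, let $A=\prod_{k=1}^N(-\partial_x^2+c_k)$ (a positive invertible operator, so its whole spectrum lies in $[c_1\cdots c_N,\infty)$ and $\delta=c_1\cdots c_N$ works), and define $B^j$ by $A+B^j=L_{N,j}=S_N''(Q_{c_j})$, so that $B^j=L_{N,j}-A$ has smooth exponentially decaying coefficients and hence $B^jA^{-1}$ is compact, giving (A8). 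Expanding $S_N''(U^{(N)}(t_n))$ around the decoupled sum and discarding the $O(e^{-c_*|t_n|})$ error into $R_n$, one gets $\mathcal L_N(t_n)=A+\sum_{j=1}^N B^j_n+R_n$ up to conjugation, with $\|R_nA^{-1}\|\to0$ (A5); assumptions (A1)--(A3) follow from self-adjointness and Weyl's theorem, (A4) from the structure of $L_{N,j}$ as a differential operator with the same leading part as $A$ (the potential terms are $A$-compact), (A6) from the exponential localization of the $B^\ell$'s together with $|x_j(t)-x_\ell(t)|\to\infty$, and (A7) from $\tau_n^{j/\ell}u\rightharpoonup0$ for $j\neq\ell$ because the relative translations tend to $\pm\infty$. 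Applying Corollary~\ref{cor:asymptotic-inertia} — whose kernel hypothesis I must verify — yields
\[
\inertia(\mathcal L_N(t_n))=\sum_{j=1}^N\inertia(L_{N,j})
\]
for $n$ large, hence for all $t$ since the left side is $t$-independent.

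Then I would read off the right-hand side from Lemma~\ref{lem:7.3.}: since the speeds $0<c_1<\cdots<c_N$ are distinct, each $L_{N,j}$ has a simple zero eigenvalue (eigenvector $(Q_{c_j})_x$), contributes one negative eigenvalue when $j$ is odd and none when $j$ is even. Summing, $z(\mathcal L_N)=\sum_{j=1}^N 1=N$ and $n(\mathcal L_N)=\#\{j\in\{1,\dots,N\}:j\text{ odd}\}=\lfloor (N+1)/2\rfloor$, which is the claim. The one gap to close is the hypothesis of Corollary~\ref{cor:asymptotic-inertia}, namely that $\dim\ker(\mathcal L_N(t_n))\geq\sum_j\dim\ker(L_{N,j})=N$ for $n$ large: this is where the main work lies. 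I would produce $N$ linearly independent elements of $\ker\mathcal L_N(t)$ by differentiating the variational identity \eqref{eq:n-soliton Euler} satisfied by $U^{(N)}(t;\mathbf x)$ with respect to the $N$ phase parameters $x_1,\dots,x_N$ (translations of the individual solitons), which gives $\mathcal L_N(t)\,\partial_{x_j}U^{(N)}=0$; their linear independence follows at large $t$ from the asymptotic decoupling, since $\partial_{x_j}U^{(N)}(t)\to (Q_{c_j})_x(\cdot-x_j^+)$ with disjoint supports in the limit. Combined with $\inertia(\mathcal L_N)=\sum_j\inertia(L_{N,j})$, which a priori only gives $\geq$, the dimension count forces equality, and the proof is complete. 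The main obstacle is thus the careful verification that the abstract hypotheses (A1)--(A8) and (H1)--(H5) genuinely hold for the mKdV linearized flow — in particular that the linearized-equation Cauchy problems in (H5) are well-posed on $L^2$ for both signs of time — and the construction of the exact $N$-dimensional kernel; everything else is bookkeeping with results already in hand.
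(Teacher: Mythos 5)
Your proposal is correct and follows essentially the same route as the paper: time-independence of the inertia via Proposition~\ref{prop:iso-inertia}, reduction to $\sum_j\inertia(L_{N,j})$ at time infinity via Theorem~\ref{thm:asymptotic-inertia} and Corollary~\ref{cor:asymptotic-inertia}, and the count from Lemma~\ref{lem:7.3.}. In fact you supply details the paper leaves implicit, notably the verification of the kernel-dimension hypothesis of Corollary~\ref{cor:asymptotic-inertia} via the $N$ phase derivatives $\partial_{x_j}U^{(N)}$.
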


From the preservation of inertia stated in Theorem~\ref{thm:asymptotic-inertia}, we know that
\[
\inertia(\mathcal L_N)=\sum_{j=1}^N \inertia(L_{N,j}).
 \]
Therefore, Lemma~\ref{lem:inertia-L} is a direct consequence of the results of Section~\ref{sec:spectra-line-oper}, in particular Lemma~\ref{lem:7.3.}.

\bibliographystyle{abbrv}
\bibliography{../master}

\ifx \undefined \booktitle \def \booktitle#1{{{\em #1}}} \fi\ifx \cftil
  \undefined \def \cftil#1{\~#1} \fi\ifx \undefined \cprime \def \cprime {$'$}
  \fi\ifx \undefined \flqq \def \flqq {\ifmmode \ll \else \leavevmode \raise
  0.2ex \hbox{$\scriptscriptstyle \ll $}\fi}\fi\ifx \undefined \frqq \def \frqq
  {\ifmmode \gg \else \leavevmode \raise 0.2ex \hbox{$\scriptscriptstyle \gg
  $}\fi}\fi\ifx \undefined \k \let \k = \c \fi\ifx \undefined \mathbb \def
  \mathbb #1{{\bf #1}}\fi\ifx \undefined \mathbf \def \mathbf #1{{\bf
  #1}}\fi\ifx \undefined \mathrm \def \mathrm #1{{\rm #1}}\fi\ifx \undefined
  \pkg \def \pkg #1{{{\tt #1}}} \fi\ifx \undefined \scr \let \scr = \cal
  \fi\def\cprime{$'$}
\begin{thebibliography}{10}

\bibitem{Al19}
J.~P. Albert.
\newblock A uniqueness result for 2-soliton solutions of the {K}orteweg--de
  {V}ries equation.
\newblock {\em Discrete Contin. Dyn. Syst.}, 39(7):3635--3670, 2019.

\bibitem{AlBoNg07}
J.~P. Albert, J.~L. Bona, and N.~V. Nguyen.
\newblock On the stability of {K}d{V} multi-solitons.
\newblock {\em Differential Integral Equations}, 20(8):841--878, 2007.

\bibitem{AlMu13}
M.~A. Alejo and C.~Mu\~{n}oz.
\newblock Nonlinear stability of {MK}d{V} breathers.
\newblock {\em Comm. Math. Phys.}, 324(1):233--262, 2013.

\bibitem{AlMuVe13}
M.~A. Alejo, C.~Mu\~{n}oz, and L.~Vega.
\newblock The {G}ardner equation and the {$L^2$}-stability of the {$N$}-soliton
  solution of the {K}orteweg-de {V}ries equation.
\newblock {\em Trans. Amer. Math. Soc.}, 365(1):195--212, 2013.

\bibitem{BeGhLe14}
J.~Bellazzini, M.~Ghimenti, and S.~Le~Coz.
\newblock Multi-solitary waves for the nonlinear {K}lein-{G}ordon equation.
\newblock {\em Comm. Partial Differential Equations}, 39(8):1479--1522, 2014.

\bibitem{Be72}
T.~B. Benjamin.
\newblock The stability of solitary waves.
\newblock {\em Proc. Roy. Soc. London Ser. A}, 328:153--183, 1972.

\bibitem{Bo75}
J.~Bona.
\newblock On the stability theory of solitary waves.
\newblock {\em Proc. Roy. Soc. London Ser. A}, 344(1638):363--374, 1975.

\bibitem{BoLiNg04}
J.~L. Bona, Y.~Liu, and N.~V. Nguyen.
\newblock Stability of solitary waves in higher-order {S}obolev spaces.
\newblock {\em Commun. Math. Sci.}, 2(1):35--52, 2004.

\bibitem{CaLi82}
T.~Cazenave and P.-L. Lions.
\newblock Orbital stability of standing waves for some nonlinear
  {S}chr\"odinger equations.
\newblock {\em Comm. Math. Phys.}, 85(4):549--561, 1982.

\bibitem{ChGuNaTs07}
S.-M. Chang, S.~Gustafson, K.~Nakanishi, and T.-P. Tsai.
\newblock Spectra of linearized operators for {NLS} solitary waves.
\newblock {\em SIAM J. Math. Anal.}, 39(4):1070--1111, 2007/08.

\bibitem{ChLi19}
G.~Chen and J.~Liu.
\newblock {Soliton resolution for the modified KdV equation}.
\newblock {\em arXiv:1907.07115}, 2019.

\bibitem{CoKeStTaTa03}
J.~Colliander, M.~Keel, G.~Staffilani, H.~Takaoka, and T.~Tao.
\newblock Sharp global well-posedness for {K}d{V} and modified {K}d{V} on
  {$\Bbb R$} and {$\Bbb T$}.
\newblock {\em J. Amer. Math. Soc.}, 16(3):705--749, 2003.

\bibitem{CoCoVe20}
S.~Correia, R.~C\^{o}te, and L.~Vega.
\newblock Asymptotics in {F}ourier space of self-similar solutions to the
  modified {K}orteweg-de {V}ries equation.
\newblock {\em J. Math. Pures Appl. (9)}, 137:101--142, 2020.

\bibitem{CoLe11}
R.~C{\^o}te and S.~Le~Coz.
\newblock High-speed excited multi-solitons in nonlinear {S}chr\"odinger
  equations.
\newblock {\em J. Math. Pures Appl. (9)}, 96(2):135--166, 2011.

\bibitem{CoMaMe11}
R.~C{\^o}te, Y.~Martel, and F.~Merle.
\newblock Construction of multi-soliton solutions for the {$L^2$}-supercritical
  g{K}d{V} and {NLS} equations.
\newblock {\em Rev. Mat. Iberoam.}, 27(1):273--302, 2011.

\bibitem{deGeRo15}
S.~De~Bi\`evre, F.~Genoud, and S.~Rota~Nodari.
\newblock Orbital stability: analysis meets geometry.
\newblock In {\em Nonlinear optical and atomic systems}, volume 2146 of {\em
  Lecture Notes in Math.}, pages 147--273. Springer, Cham, 2015.

\bibitem{GePuRo16}
P.~Germain, F.~Pusateri, and F.~Rousset.
\newblock Asymptotic stability of solitons for m{K}d{V}.
\newblock {\em Adv. Math.}, 299:272--330, 2016.

\bibitem{Gr91}
L.~Greenberg.
\newblock An oscillation method for fourth-order, selfadjoint, two-point
  boundary value problems with nonlinear eigenvalues.
\newblock {\em SIAM J. Math. Anal.}, 22(4):1021--1042, 1991.

\bibitem{GrShSt87}
M.~Grillakis, J.~Shatah, and W.~Strauss.
\newblock Stability theory of solitary waves in the presence of symmetry. {I}.
\newblock {\em J. Funct. Anal.}, 74(1):160--197, 1987.

\bibitem{GrShSt90}
M.~Grillakis, J.~Shatah, and W.~A. Strauss.
\newblock Stability theory of solitary waves in the presence of symmetry. {II}.
\newblock {\em J. Func. Anal.}, 94(2):308--348, 1990.

\bibitem{Hi72}
R.~Hirota.
\newblock Exact solution of the modified korteweg-de vries equation for
  multiple collisions of solitons.
\newblock {\em Journal of the Physical Society of Japan}, 33(5):1456--1458,
  1972.

\bibitem{HoPeZw11}
J.~Holmer, G.~Perelman, and M.~Zworski.
\newblock Effective dynamics of double solitons for perturbed m{K}d{V}.
\newblock {\em Comm. Math. Phys.}, 305(2):363--425, 2011.

\bibitem{Ka07}
T.~Kapitula.
\newblock On the stability of {$N$}-solitons in integrable systems.
\newblock {\em Nonlinearity}, 20(4):879--907, 2007.

\bibitem{Ka76}
T.~Kato.
\newblock {\em Perturbation theory for linear operators}.
\newblock Springer, Berlin -- New York, 1976.

\bibitem{KePoVe93}
C.~E. Kenig, G.~Ponce, and L.~Vega.
\newblock Well-posedness and scattering results for the generalized
  {K}orteweg-de {V}ries equation via the contraction principle.
\newblock {\em Comm. Pure Appl. Math.}, 46(4):527--620, 1993.

\bibitem{KiVi20}
R.~Killip and M.~Visan.
\newblock {Orbital stability of KdV multisolitons in $H^{-1}$}.
\newblock {\em arXiv:2009.06746}, 2020.

\bibitem{KoTa20}
H.~Koch and D.~Tataru.
\newblock Multisolitons for the cubic nls in 1-d and their stability.
\newblock {\em arXiv:2008.13352}, 2020.

\bibitem{KoTaVi14}
H.~Koch, D.~Tataru, and M.~Vi\c{s}an.
\newblock {\em Dispersive equations and nonlinear waves}, volume~45 of {\em
  Oberwolfach Seminars}.
\newblock Birkh\"{a}user/Springer, Basel, 2014.
\newblock Generalized Korteweg-de Vries, nonlinear Schr\"{o}dinger, wave and
  Schr\"{o}dinger maps.

\bibitem{KoPe05}
Y.~Kodama and D.~Pelinovsky.
\newblock Spectral stability and time evolution of {$N$}-solitons in the
  {K}d{V} hierarchy.
\newblock {\em J. Phys. A}, 38(27):6129--6140, 2005.

\bibitem{La68}
P.~D. Lax.
\newblock Integrals of nonlinear equations of evolution and solitary waves.
\newblock {\em Comm. Pure Appl. Math.}, 21:467--490, 1968.

\bibitem{La75}
P.~D. Lax.
\newblock Periodic solutions of the {K}d{V} equation.
\newblock {\em Comm. Pure Appl. Math.}, 28:141--188, 1975.

\bibitem{LeLiTs15}
S.~Le~Coz, D.~Li, and T.-P. Tsai.
\newblock Fast-moving finite and infinite trains of solitons for nonlinear
  {S}chr\"odinger equations.
\newblock {\em Proc. Roy. Soc. Edinburgh Sect. A}, 145(6):1251--1282, 2015.

\bibitem{LeTs14}
S.~Le~Coz and T.-P. Tsai.
\newblock Infinite soliton and kink-soliton trains for nonlinear
  {S}chr\"odinger equations.
\newblock {\em Nonlinearity}, 27(11):2689--2709, 2014.

\bibitem{LeWu18}
S.~Le~Coz and Y.~Wu.
\newblock Stability of multisolitons for the derivative nonlinear schrödinger
  equation.
\newblock {\em International Mathematics Research Notices},
  2018(13):4120--4170, 2018.

\bibitem{LiPo15}
F.~Linares and G.~Ponce.
\newblock {\em Introduction to nonlinear dispersive equations}.
\newblock Universitext. Springer, New York, second edition, 2015.

\bibitem{LiWa20}
Y.~Liu and Z.~Wang.
\newblock Stability of smooth multi-solitons for the camassa-holm equation.
\newblock {\em preprint}, 2020.

\bibitem{Lo03}
O.~Lopes.
\newblock A class of isoinertial one parameter families of selfadjoint
  operators.
\newblock In {\em Nonlinear equations: methods, models and applications
  ({B}ergamo, 2001)}, volume~54 of {\em Progr. Nonlinear Differential Equations
  Appl.}, pages 191--195. Birkh\"{a}user, Basel, 2003.

\bibitem{MaSa93}
J.~H. Maddocks and R.~L. Sachs.
\newblock On the stability of {K}d{V} multi-solitons.
\newblock {\em Comm. Pure Appl. Math.}, 46(6):867--901, 1993.

\bibitem{Ma05}
Y.~Martel.
\newblock Asymptotic {$N$}-soliton-like solutions of the subcritical and
  critical generalized {K}orteweg-de {V}ries equations.
\newblock {\em Amer. J. Math.}, 127(5):1103--1140, 2005.

\bibitem{MaMe06}
Y.~Martel and F.~Merle.
\newblock {Multi solitary waves for nonlinear {S}chr{\"o}dinger equations}.
\newblock {\em Ann. Inst. H. Poincar{\'e} Anal. Non Lin{\'e}aire},
  23(6):849--864, 2006.

\bibitem{MaMeTs02}
Y.~Martel, F.~Merle, and T.-P. Tsai.
\newblock Stability and asymptotic stability in the energy space of the sum of
  {$N$} solitons for subcritical g{K}d{V} equations.
\newblock {\em Comm. Math. Phys.}, 231(2):347--373, 2002.

\bibitem{MaMeTs06}
Y.~Martel, F.~Merle, and T.-P. Tsai.
\newblock Stability in {$H^1$} of the sum of {$K$} solitary waves for some
  nonlinear {S}chr\"odinger equations.
\newblock {\em Duke Math. J.}, 133(3):405--466, 2006.

\bibitem{Me90}
F.~Merle.
\newblock Construction of solutions with exactly {$k$} blow-up points for the
  {S}chr\"odinger equation with critical nonlinearity.
\newblock {\em Comm. Math. Phys.}, 129(2):223--240, 1990.

\bibitem{Mi68}
R.~M. Miura.
\newblock Korteweg-de {V}ries equation and generalizations. {I}. {A} remarkable
  explicit nonlinear transformation.
\newblock {\em J. Mathematical Phys.}, 9:1202--1204, 1968.

\bibitem{Mu14}
C.~Mu\~{n}oz.
\newblock Stability of integrable and nonintegrable structures.
\newblock {\em Adv. Differential Equations}, 19(9-10):947--996, 2014.

\bibitem{NeLo06}
A.~Neves and O.~Lopes.
\newblock Orbital stability of double solitons for the {B}enjamin-{O}no
  equation.
\newblock {\em Comm. Math. Phys.}, 262(3):757--791, 2006.

\bibitem{Ol86}
P.~J. Olver.
\newblock {\em Applications of {L}ie groups to differential equations}, volume
  107 of {\em Graduate Texts in Mathematics}.
\newblock Springer-Verlag, New York, 1986.

\bibitem{PeWe94}
R.~L. Pego and M.~I. Weinstein.
\newblock Asymptotic stability of solitary waves.
\newblock {\em Comm. Math. Phys.}, 164(2):305--349, 1994.

\bibitem{Sc86}
P.~C. Schuur.
\newblock {\em Asymptotic analysis of soliton problems}, volume 1232 of {\em
  Lecture Notes in Mathematics}.
\newblock Springer-Verlag, Berlin, 1986.

\bibitem{Wa73}
M.~Wadati.
\newblock The modified {K}orteweg-de {V}ries equation.
\newblock {\em J. Phys. Soc. Japan}, 34:1289--1296, 1973.

\bibitem{WaOh82}
M.~Wadati and K.~Ohkuma.
\newblock Multiple-pole solutions of the modified {K}orteweg-de {V}ries
  equation.
\newblock {\em J. Phys. Soc. Japan}, 51(6):2029--2035, 1982.

\bibitem{Wa16}
Z.~Wang.
\newblock Multi-speed solitary waves for the {K}lein-{G}ordon-{S}chr\"{o}dinger
  system with cubic interaction.
\newblock {\em J. Math. Anal. Appl.}, 436(2):1223--1241, 2016.

\bibitem{Wa17a}
Z.~Wang.
\newblock Multi solitary waves for a fourth order nonlinear {S}chr\"{o}dinger
  type equation.
\newblock {\em Nonlinear Anal. Real World Appl.}, 37:287--308, 2017.

\bibitem{Wa17b}
Z.~Wang.
\newblock Stability of {H}asimoto solitons in energy space for a fourth order
  nonlinear {S}chr\"{o}dinger type equation.
\newblock {\em Discrete Contin. Dyn. Syst.}, 37(7):4091--4108, 2017.

\bibitem{WaCu15}
Z.~Wang and S.~Cui.
\newblock Multi-speed solitary wave solutions for a coherently coupled
  nonlinear {S}chr\"{o}dinger system.
\newblock {\em J. Math. Phys.}, 56(2):021503, 13, 2015.

\bibitem{WaCu17}
Z.~Wang and S.~Cui.
\newblock Multi-solitons for a generalized {D}avey-{S}tewartson system.
\newblock {\em Sci. China Math.}, 60(4):651--670, 2017.

\bibitem{We85}
M.~I. Weinstein.
\newblock Modulational stability of ground states of nonlinear {S}chr\"odinger
  equations.
\newblock {\em SIAM J. Math. Anal.}, 16:472--491, 1985.

\bibitem{Zh01}
P.~E. Zhidkov.
\newblock {\em Korteweg-de {V}ries and nonlinear {S}chr\"odinger equations:
  qualitative theory}, volume 1756 of {\em Lecture Notes in Mathematics}.
\newblock Springer-Verlag, Berlin, 2001.

\end{thebibliography}

\end{document}